\documentclass[11pt]{amsart}

\usepackage[all]{xy}
\usepackage{hhline}
\usepackage{amssymb}
\usepackage{amsmath,amsthm,amscd,amsfonts}
\usepackage[dvips, dvipsnames, usenames]{color}

\usepackage[active]{srcltx}

\oddsidemargin -.2in \evensidemargin -.2in \topmargin -.3in
\headheight .2in \headsep .2in \textwidth 16cm \textheight 24cm
\footskip .2in \footnotesep .1in

\def\k{\Bbbk}

\newcommand{\GL}{\mathbf{GL}}

\newcommand{\ydsn}{{}^{\s_{n}}_{\s_{n}}\mathcal{YD}}

\newcommand{\ot}{\otimes}
\newcommand{\Ind}{\operatorname{Ind}}

\newcommand{\com}{\Delta}

\newcommand\toba{{\mathfrak B }}

\newcommand{\gr}{\operatorname{gr}}
\newcommand{\trid}{\triangleright}

\newcommand{\Lc}{{\mathcal L}}

\newcommand{\eps}{\varepsilon}

\newcommand{\K}{{\mathcal K}}
\newcommand{\Z}{{\mathbb Z}}
\newcommand{\N}{{\mathbb N}}

\def\ot{\otimes}

\def\mD{\mathcal{D}}

\def\mO{\mathcal{O}}

\def\mQ{\mathcal{Q}}
\def\mH{\mathcal{H}}
\def\hq{\mH(\mQ)}

\newcommand{\II}{\mathcal{I}}

\newcommand{\Oc}{{\mathcal O}}
\newcommand{\oc}{{\mathcal O}}
\newcommand{\ydh}{{}^H_H\mathcal{YD}}

\newcommand{\End}{\operatorname{End}}

\newcommand\Hom{\operatorname{Hom}}
\newcommand\Ker{\operatorname{Ker}}

\newcommand{\Ga}{\Gamma}


\newcommand\co{\operatorname{co}}
\newcommand\sg{\operatorname{sg}}
\newcommand{\ydho}{{}^{H_0}_{H_0}\mathcal{YD}}
\newcommand{\ydao}{{}^{A_0}_{A_0}\mathcal{YD}}
\newcommand{\ydga}{{}^{\Ga}_{\Ga}\mathcal{YD}}

\theoremstyle{plain}

\newtheorem{lema}{Lemma}[section]
\newtheorem{theorem}[lema]{Theorem}
\newtheorem{cor}[lema]{Corollary}

\theoremstyle{definition}
\newtheorem{definition}[lema]{Definition}
\newtheorem{exa}[lema]{Example}

\theoremstyle{remark}
\newtheorem{obs}[lema]{Remark}

\theoremstyle{plain}
\newcounter{maint}

\theoremstyle{plain}

\def\ep{\varepsilon}
\newcommand\teta{\tilde{\eta}}

\newcommand\id{\operatorname{id}}

\newcommand\sbb{\mathbb S}

\newcommand\sn{\mathbb S_n}

\newcommand\dm{\mathbb D_m}

\newcommand\s{\mathbb S}
\newcommand{\cS}{\mathcal{S}}
\newcommand{\yddm}{{}^{\dm}_{\dm}\mathcal{YD}}

\setcounter{MaxMatrixCols}{15}

\def\pf{\begin{proof}}
\def\epf{\end{proof}}

\theoremstyle{remark}


\begin{document}

\renewcommand{\baselinestretch}{1.2}

\thispagestyle{empty}

\title[Deformation by cocycles]
{Deformation by cocycles of pointed Hopf algebras over
non-abelian groups}

\author[Gast\'on A. Garc\'ia, Mitja Mastnak]
{Gast\'on Andr\'es Garc\'ia and Mitja Mastnak}

\thanks{This work was partially supported by
Universidad Nacional de C\'ordoba,
ANPCyT-Foncyt, CONICET,
Ministerio de Ciencia y Tecnolog\'\i a (C\'ordoba) and Secyt (UNC)}

\address{
\newline\noindent Facultad de Matem\'atica, Astronom\'\i a y F\'\i sica,
Universidad Nacional de C\'ordoba.
CIEM -- CONICET.
Medina Allende s/n, Ciudad Universitaria
5000 C\'ordoba,
Argentina
\and
\newline
\noindent Department of Mathematics and C.S.
Saint Mary's University
Halifax, NS B3H 3C3, Canada}
\email{ggarcia@famaf.unc.edu.ar}
\email{mmastnak@cs.smu.ca}

\subjclass[2010]{16T05}

\begin{abstract}
We introduce a method to construct explicitly multiplicative 2-cocycles
for bosoni\-zations of Nichols algebras $\toba(V)$ over Hopf algebras $H$.
These cocycles arise as liftings of $H$-invariant linear functionals
on $V\otimes V$ and give a close formula to deform braided commutator-type 
relations.
Using this construction, we show that all known finite dimensional 
pointed Hopf algebras over the dihedral groups $\dm$ with 
$m=4t\geq 12$, over the symmetric group $\s_{3}$ and some 
families over $\s_{4}$ are
cocycle deformations of bosonizations of Nichols algebras.             
\end{abstract}

\maketitle

\section*{Introduction}
Let $\Bbbk$ be an algebraically closed field of characteristic zero.
A Hopf algebra $A$ is said to be pointed if all simple subcoalgebras are
one dimensional; in particular, its coradical $A_{0}$ coincides 
with the group algebra $\Bbbk G(A)$ over the group of group-like elements.
The best method for classifying finite dimensional pointed Hopf algebras 
over $\Bbbk$ was developed by Andruskiewitsch and Schneider, see \cite{AS}.
Shortly, the method consists on finding first all braided vector spaces $V$
in $\ydao$ such that its Nichols algebra $\toba(V)$ is finite dimensional, then 
find all pointed Hopf algebras $H$ such that 
the graded algebra $\gr H$ induced by the coradical filtration is isomorphic to 
the bosonization $\toba(V)\#A_{0}$, and finally prove the generation in degree
one.

Using this method, they were able to classify in \cite{AS2} 
all finite dimensional pointed Hopf algebras
$A$ such that $G(A)$ is abelian and whose order is relative prime to $210$. 
When the group of group-likes is not abelian, the problem is far from being completed. 
Some hope is present in the lack of examples: in this situation, Nichols algebras tend to be 
infinite dimensional, see for example \cite{AFGV}. 
Nevertheless, examples on which the Nichols algebras are finite dimensional do exist. 
Over the
symmetric groups $\s_{3}$ and $\s_{4}$ these algebras were determined in \cite{AHS}. 
All of them arise from racks associated to a cocycle, and in {\it loc. cit.} and \cite{GG} 
the classification of all finite dimensional pointed Hopf algebras over $\s_{3}$ and $\s_{4}$ 
is completed, respectively. Over the dihedral groups $\dm$ with $m=4t\geq 12$, the 
classification of finite dimensional Nichols algebras and finite dimensional 
pointed Hopf algebras was done in \cite{FG}. In this case, it turns out that  
all Nichols algebras
are isomorphic to exterior algebras. 

Among others, useful tools for constructing new Hopf algebras are the deformation
of the multiplication using multiplicative $2$-cocycles and its dual
notion of deforming the coproduct using twists. With this in mind, it is 
interesting to ask when two new non-isomorphic Hopf algebras are cocycle
deformations of each other. 
It was proved
that the family of finite dimensional 
pointed Hopf algebras over abelian groups $\Ga$ appearing in \cite{AS2} such that
the braided vector space $V$ is a quantum linear space, can be constructed by
deforming the multiplication in $\toba(V)\# \Bbbk \Ga$, see \cite{Mk}, \cite{GM}.
Also, Garc\'ia Iglesias and Mombelli \cite{GIM} proved using module category theory 
the same result for 
all known finite dimensional pointed Hopf algebras over the symmetric 
groups. 

In these notes, we prove in 
Theorem \ref{thm:AI-cocycle} and Theorem \ref{thm:BIL-cocycle} that
all finite dimensional pointed Hopf algebras over the dihedral groups 
$\dm$ with $m=4t\geq 12$ are cocycle deformations of bosonizations
of finite dimensional Nichols algebras, by giving explicitly the
$ 2 $-cocycles. Moreover, using these techniques, we construct the 
cocycles that give the deformation of the bosonizations of Nichols algebras 
over $\s_{3}$ and some families over $\s_{4}$,
see Theorem \ref{thm:q-1-cocycle}.

To construct the $2$-cocycles we apply to non-abelian groups 
the theory developed in \cite{GM}. 
Specifically, let $H$ be a Hopf algebra with bijective antipode and
$V\in \ydh$. 
We first associate to any linear map $\eta:V\ot V\to \Bbbk$ a Hochschild
$2$-cocycle on the Nichols algebras $\toba(V)$. 
If this map is invariant under the action of $H$, it gives rise
to a Hochschild $2$-cocycle $\teta$ on $A=\toba(V)\# H$.
Applying \cite[Lemma 4.1]{GM}, it turns out that the exponentiation $\sigma = e^{\teta}$ 
is indeed a multiplicative $2$-cocycle, provided 
extra conditions are satisfied, \emph{e.g.}, $H$ is 
semisimple and the braiding of $V$ is symmetric.
As 
a consequence, exponentiation of liftings of $\dm$-invariant
linear functionals on Yetter-Drinfeld modules with finite-dimensional Nichols algebra
are multiplicative $2$-cocycles and these are the ones we use to prove
our first main theorems, Theorem \ref{thm:AI-cocycle} and Theorem \ref{thm:BIL-cocycle}.

A strong consequence of using an $H$-invariant linear map 
$\eta$ on $V\ot V$ to 
construct a multiplicative $2$-cocycle $\sigma$ is that 
$\sigma$ coincides with $\eta$ on the elements of $V\ot V$. This 
gives a closed formula for the deformation of \emph{braided commutator}-type
relations $[x,y]_{c}=0$ on $\toba(V)$ for $x,y \in V$ in the same connected
component, which in this case also include the 
\emph{power root vector}-type relations $x^{2}=0$. This simplifies
the computation of the deformation, see Lemma \ref{lem:eta=sigma}. 

The paper is organized as follows: in Section \ref{sec:prelim} we fix the notation, 
make some definitions and recall some facts that are used along the paper, such as
Yetter-Drinfeld modules $V$ over group algebras $H=\Bbbk \Ga$, Nichols algebras
$\toba(V)$, bosonizations $\toba(V)\#H$ between them, 
the lifting method and deforming cocycles. 
In Section \ref{sec:cocycles-boson} we give a method to construct 
Hochschild $2$-cocycles $\teta$ on $\toba(V)\#H$ from 
$H$-invariant linear functionals $\eta$ on $V\ot V$, 
and give necessary conditions on which the exponentiation $\sigma = e^{\teta}$ is
a multiplicative $2$-cocycle on $\toba(V)\#H$.
In Section \ref{sec:pointed-dihedral} we recall the 
classification of finite
dimensional pointed Hopf algebras over the dihedral groups
$\dm$ with $m=4t\geq 12$ and prove that they are cocycle deformation
of bosonizations, using the cocycles constructed in Section \ref{sec:cocycles-boson}. 
Finally in
Section \ref{sec:pointed-symm}, we first recall the notion of racks and
the classification of finite dimensional pointed Hopf algebras over 
$\s_{3}$ and $\s_{4}$, and then prove our last main result,
Theorem \ref{thm:q-1-cocycle} about cocycle deformations
of bosonizations of finite dimensional 
Nichols algebras over the symmetric groups.

\section{Preliminaries}\label{sec:prelim}

\subsection{Conventions}
We work over an algebraically closed field $\k$ of characteristic
zero. Good references for Hopf algebra theory are
\cite{M} and \cite{S}.

If $H$ is a Hopf algebra over $\k$ then $\com$, $\eps$
and $\cS$ denote respectively the comultiplication,
the counit and the antipode. 
Comultiplication and coactions are written using the 
Sweedler-Heynemann notation with summation sign suppressed, \textit{e.g.},
$\com(h)=h_{(1)}\ot
h_{(2)}$ for $h\in H$.

Let $\mathcal{C}$ be a braided monoidal category. A \emph{braided} 
Hopf algebra $R$ in
$\mathcal{C}$ is an object $R$ such that all structural maps
$m_{R}, \com_{R}, \eps_{R}, \cS_{R}$ are morphisms in the category.

The \emph{coradical}
$H_0$ of $H$ is the sum of all simple
sub-coalgebras of $H$. In particular, if $G(H)$ denotes the group
of \emph{group-like elements} of $H$, we have $\k G(H)\subseteq
H_0$. A Hopf algebra is \emph{pointed} if $H_0=\k
G(H)$, that is, all simple sub-coalgebras are one-dimensional.

Denote by $\{H_i\}_{i\geq 0}$ the \emph{coradical
filtration} of $H$; if $H_0$ is a Hopf
subalgebra of $H$, then by $\gr H = \oplus_{n\ge 0}\gr H(n)$ we denote the associated
graded Hopf algebra, with
 $\gr H(n) = H_n/H_{n-1}$, setting
$H_{-1} =0$.

For $h,g\in G(H)$, the linear space of $(h,g)${\it -primitives}
 is:
$$
\mathcal{P}_{h,g}(H) :=\{x\in H\mid\com(x)= x\ot h + g\ot x\}.
$$
If $g=1=h$, the linear space $\mathcal{P}(H) = \mathcal{P}_{1,1}(H)$ is called
the set of primitive elements.

If $M$
is a right $H$-comodule via $\delta(m)=m_{(0)}\ot m_{(1)} \in M\ot H$
for all $m\in M$, 
then the space of {\it right
coinvariants} is $ M^{\co \delta} = \{x\in
M\mid\delta(x)=x\ot1\}$. In particular, if $\pi:H\rightarrow L$ is a morphism of Hopf
algebras, then $H$ is a right $L$-comodule via $(\id\ot\pi)\com$ and
in this case $$H^{\co \pi}:=H^{\co\ (\id\ot\pi)\com}
=  \{h\in
H\mid (\id\ot \pi )\com(h)=h\ot1\}.$$ 
Left coinvariants, written
${^{\co \pi} H }$ are defined analogously.

Let $n,m\in \N$. We denote by $\s_{n}$ the symmetric group 
on $n$ letters and by $\dm$ the dihedral group 
of order $2m$. 

\subsection{Yetter-Drinfeld modules and
Nichols algebras}\label{subsec:ydCG-nichols-alg}
In this subsection we recall the definition of Yetter-Drinfeld modules
over Hopf algebras and we describe the irreducible ones
in case $H$ is a group algebra. We also add the definition of 
Nichols algebras associated to them.

\subsubsection{Yetter-Drinfeld modules over group algebras}
Let $H$ be a Hopf algebra. A \textit{left Yetter-Drinfeld module} $M$ over $H$
is a left $H$-module $(M,\cdot)$ and a left $H$-comodule 
$(M,\delta)$ with $\delta(m)=m_{(-1)}\ot m_{(0)} \in H\ot M$ 
for all $m\in M$, satisfying the compatibility 
condition 
$$ \delta(h\cdot m) = h_{(1)}m_{(-1)}\cS(h_{(3)})\ot h_{(2)}\cdot m_{(0)}
\qquad \forall\ m\in M, h\in H. $$
We denote by $\ydh$ the category of left Yetter-Drinfeld modules over $H$. 
It is a braided monoidal category: for any $M, N \in \ydh$, 
the braiding $c_{M,N}:M\ot N \to N\ot M$ is given by 
$$c_{M,N}(m\ot n) = m_{(-1)}\cdot n \ot m_{(0)} \qquad \forall\  
m\in M,n\in N.$$
Assume $H=\k \Ga$ with 
$\Ga$ a finite group. A left Yetter-Drinfeld module over $\k \Ga$ is a left
$\k\Ga$-module and left $\k \Ga$-comodule $M$ such that
$$
\delta(g.m) = ghg^{-1} \otimes g.m, \qquad \forall\ m\in M_h, g, h\in \Ga,
$$
where $M_h = \{m\in M: \delta(m) = h\otimes m\}$; clearly, $M =
\oplus_{h\in \Ga} M_h$ and thus $M$ is a $\Ga$-graded module.  
We denote this category simply 
by $\ydga$.

Yetter-Drinfeld modules over $\k\Ga$ are
completely reducible. The irreducible modules
are parameterized by pairs $(\oc, \rho)$, where $\oc$ is a
conjugacy class of $\Ga$ and $(\rho,V)$ is an irreducible representation of the
centralizer $C_\Ga(\sigma)$ of a fixed point $\sigma\in \oc$. 
The corresponding Yetter-Drinfeld module is given by
$M(\oc, \rho) = \Ind_{C_\Ga(\sigma)}^{G}V =
\Bbbk \Ga \otimes_{C_\Ga(\sigma)} V$. 

Explicitly, let $\sigma_1 = \sigma$, \dots, $\sigma_{n}$
be a numeration of $\oc$ and
let $g_i\in \Ga$ such that $g_i \sigma g_{i}^{-1} = \sigma_i$ for all $1\le i \le
n$. Then  $M(\oc, \rho) = \oplus_{1\le i \le n}g_i\otimes V $. The Yetter-Drinfeld module 
structure is given as follows: let
$g_iv := g_i\otimes v \in M(\oc,\rho)$, $1\le i \le n$, $v\in V$.
If $v\in V$ and $1\le i \le n$, then the action of $g\in \Ga$ and
the coaction are given by
$$g\cdot (g_iv) = g_j(\gamma\cdot v), \qquad
\delta(g_iv) = \sigma_i\otimes g_iv,
$$
where $gg_i = g_j\gamma$, for some $1\le j \le n$ and $\gamma\in
C_\Ga(\sigma)$. 
In this case,
the explicit formula for the braiding is given by
$$
c(g_iv\otimes g_jw) = \sigma_i\cdot(g_jw)\otimes g_iv = g_h(\gamma\cdot
v)\otimes g_iv,$$ 
	for any $1\le i,j\le n$, $v,w\in V$,
where $\sigma_ig_j = g_h\gamma$ for unique $h$, $1\le h \le n$ and
$\gamma \in C_\Ga(\sigma)$. For explicit examples with $\Ga = \dm$
and $\Ga = \sn$
see Subsections \ref{subsec:ydnadm} and \ref{subsec:ydnasn}.

\subsubsection{Nichols algebras}\label{subsubsec:nichols-alg}
The Nichols algebra of a braided vector space $(V,c)$ can be
defined in various different ways, see \cite{N, AG, AS, H}. They are connected
Hopf algebras in a braided monoidal category with certain properties.
In general, the computation of the Nichols algebra of an arbitrary braided
vector space is a delicate issue. We are interested in Nichols algebras of
braided vector spaces arising from Yetter-Drinfeld modules, hence
we give the explicit definition in this case.

The notion of Nichols algebras appeared first in the work 
of Nichols \cite{N} and then it was later rediscovered by several authors. 
\begin{definition}\cite[Def. 2.1]{AS}
Let $H$ be a Hopf algebra and $V \in \ydh$. A braided $\N$-graded 
Hopf algebra $R = \bigoplus_{n\geq 0} R(n) \in \ydh$  is called 
the \textit{Nichols algebra} of $V$ if 
\begin{enumerate}
 \item[(i)] $\k\simeq R(0)$, $V\simeq R(1) \in \ydh$,
 \item[(ii)] $R(1) = \mathcal{P}(R)
=\{r\in R~|~\com_{R}(r)=r\ot 1 + 1\ot r\}$, the linear space of primitive elements.
 \item[(iii)] $R$ is generated as an algebra by $R(1)$.
\end{enumerate}
In this case, $R$ is denoted by $\toba(V) = \bigoplus_{n\geq 0} \toba^{n}(V) $.    
\end{definition}

For any $V \in \ydh$ there is a Nichols algebra $\toba(V)$ associated to it. 
It can be constructed as a quotient of the tensor algebra $T (V)$ by the largest 
homogeneous two-sided ideal $I$ satisfying:
\begin{itemize}
\item $I$ is generated by 
homogeneous elements of degree $\geq 2$.
\item $\com(I) \subseteq I\ot T(V) + T(V)\ot I$, i.~e., it is also a coideal.
\end{itemize}
In such a case, $\toba(V) = T(V)/ I$. See
\cite[Section 2.1]{AS} for details.

An important observation is that the Nichols algebra 
$\toba(V)$, as algebra and coalgebra,
is completely determined just by the braiding. 

Given a 
braided vector space $(V,c)$, one may construct the 
Nichols algebra $\toba(V,c) = \toba(V)$ in a similar way as before
by taking a quotient of the tensor algebra $T(V)$ by the homogeneous 
two-sided ideal given by the kernel of an homogeneous symmetrizer. 
Briefly, let $\mathbb{B}_{n}$ be the braid
group of $n$ letters.
Since $c$ satisfies the braid
equation, it induces a representation of 
$\mathbb{B}_{n}$, $\rho_{n}:\mathbb{B}_{n} \to \GL(V^{\ot n})$
for each $n \geq 2$. 
Consider the morphisms
$$Q_{n} = \sum_{\sigma \in \s_{n}}\rho_{n}(M (\sigma)) \in \End(V^{\ot n} ),$$
where $M:\s_{n} \to \mathbb{B}_{n}$ is the Matsumoto section
corresponding to the canonical projection 
$\mathbb{B}_{n}\twoheadrightarrow \s_{n}$.
Then the Nichols algebra $\toba(V)$ 
is the quotient of the tensor algebra $T (V)$ by the two-sided ideal
$\mathcal{J}=\bigoplus_{n\geq 2 } \Ker Q_{n}$. 
If $c = \tau$ is the usual flip, then
$B(V ) = S V$ is just the symmetric algebra of $V$; 
if $c = -\tau$, then $\toba(V ) = \bigwedge V$ is the exterior
algebra of $V$. 

Let $\Ga$ be a finite group. 
We denote by
$\toba(\oc,\rho)$ the 
Nichols algebra associated
to the irreducible Yetter-Drinfeld module $M(\oc, \rho) \in \ydga$.

Let $V \in \ydga$ such that 
$\toba(V)$ is finite dimensional and
let $\{x_{i}\}_{i\in I}$ be homogeneous primitive elements 
that span linearly $V$ with $\delta (x_{i}) = g_{i}\ot x_{i}$
and $g_{i}\in \Ga$ for all $i\in I$.
Since for all $h\in \Ga$,
$h\cdot x_{i}$ is again an homogeneous primitive element, from now on
we assume that
\begin{equation}\label{eq:assumption-yd}
h\cdot x_{i} = \chi_{i}(h) x_{\sigma(h)(i)}\text{ for all }i\in I,\ h\in \Ga, 
\end{equation}
where $\sigma: \Ga \to \sbb_{I}$ and $\chi_{i} : \Ga \to \Bbbk$ 
is a character, see \cite[Ex. 5.9]{AG2}. This condition holds for
all finite dimensional pointed Hopf algebras over the symmetric
groups and over the dihedral groups, see Sections \ref{sec:pointed-dihedral} 
and \ref{sec:pointed-symm}.
We write $\sigma(g_{i})(j) = i\trid j$ for all $i,j \in I$.

\begin{obs}\label{rmk:rel-racks}
$(a)$  If $V$ is irreducible, then $V \simeq M(\Oc, \rho)$ with
$\Oc$ a conjugacy class of $\Ga$. In such a case,
$I$ can be identified with $\Oc$ and $\sigma (h)$
is just the conjugation by $h$.

$(b)$ If $\Ga$ is abelian, then $V$ is a 
braided space of diagonal type, \textit{i.\ e.\ }
$h\cdot x_{i} = \chi_{i}(h) x_{i}$ for all
$i\in I$ and $\sigma (h) = \id$ for all $h\in \Ga$.
\end{obs}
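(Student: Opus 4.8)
The plan is to read off both statements from the classification of the irreducible objects of $\ydga$ recalled above, combined with the Yetter--Drinfeld compatibility applied to the distinguished basis $\{x_i\}_{i\in I}$ of \eqref{eq:assumption-yd}. The engine in both cases is a single degree-comparison identity, which I would record first. Since $\delta(x_i)=g_i\ot x_i$, the compatibility condition $\delta(h\cdot m)=hgh^{-1}\ot h\cdot m$ (valid for $m\in M_g$) shows that $h\cdot x_i$ is homogeneous of degree $hg_ih^{-1}$; comparing with $h\cdot x_i=\chi_i(h)x_{\sigma(h)(i)}$, whose degree is $g_{\sigma(h)(i)}$, gives
\begin{equation}\label{eq:deg-conj}
g_{\sigma(h)(i)}=h\,g_i\,h^{-1}\qquad\text{for all } i\in I,\ h\in\Ga.
\end{equation}

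For $(a)$, complete reducibility of $\ydga$ together with irreducibility of $V$ forces $V\simeq M(\Oc,\rho)$ for a unique conjugacy class $\Oc$ and an irreducible representation $\rho$ of $C_\Ga(\sigma)$, which is exactly the parametrization recalled before the definition of Nichols algebras. By \eqref{eq:deg-conj} the support $\{g_i:i\in I\}$ is stable under conjugation, and irreducibility (a single orbit of degrees) identifies it with $\Oc$. In the situations treated in the sequel $\rho$ is a character, so $i\mapsto g_i$ is a bijection $I\to\Oc$ and we identify the two sets; under this identification \eqref{eq:deg-conj} reads $\sigma(h)\colon g\mapsto hgh^{-1}$, i.e. $\sigma(h)$ is conjugation by $h$. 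Specializing $h=g_i$ then gives that the operation $i\trid j=\sigma(g_i)(j)$ corresponds to $g_ig_jg_i^{-1}$, so $\trid$ is precisely the conjugation rack on $\Oc$.

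For $(b)$, suppose $\Ga$ is abelian. Then $hg_ih^{-1}=g_i$, so \eqref{eq:deg-conj} gives $g_{\sigma(h)(i)}=g_i$, meaning every $h\in\Ga$ preserves each homogeneous component $V_g=\langle x_i:g_i=g\rangle$; hence each $V_g$ is a $\k\Ga$-submodule. Since $\Ga$ is finite abelian and $\k$ is algebraically closed of characteristic zero, $\k\Ga$ is semisimple with one-dimensional simple modules, so the commuting operators $\{h\cdot(-)\}_{h\in\Ga}$ are simultaneously diagonalizable on each $V_g$. Replacing $\{x_i\}$ by a basis of common eigenvectors (which still satisfies \eqref{eq:assumption-yd}, now with $\sigma(h)=\id$) we obtain $h\cdot x_i=\chi_i(h)x_i$ with each $\chi_i$ a character, i.e. $V$ is of diagonal type.

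The only genuinely delicate point is the literal identification $I\leftrightarrow\Oc$ in $(a)$: a priori each degree of $\Oc$ occurs in $V$ with multiplicity $\dim\rho$, so the bijection holds verbatim exactly when $\rho$ is one-dimensional, which is the case in all examples treated in Sections \ref{sec:pointed-dihedral} and \ref{sec:pointed-symm}. In general one should read $(a)$ as the identification of the support of $V$ with $\Oc$ together with the description of $\sigma(h)$ as conjugation, both of which follow from \eqref{eq:deg-conj} regardless of $\dim\rho$. In $(b)$ the subtlety is merely that the originally chosen basis need not be diagonal, but this is resolved by passing to the simultaneous eigenbasis above; I expect no further obstruction.
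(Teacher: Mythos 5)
Your argument is correct and follows the route the paper implicitly relies on: the remark is stated without proof, and everything reduces to the identity $g_{\sigma(h)(i)}=h\,g_i\,h^{-1}$ extracted from the Yetter--Drinfeld compatibility together with the explicit description of $M(\Oc,\rho)$ recalled just above it. Your caveat in $(a)$ is well taken --- for $\dim\rho>1$ the literal bijection $I\leftrightarrow\Oc$ fails (e.g.\ $M_\ell$ over $\dm$, where $\Oc_{h^n}=\{h^n\}$ yet $\dim M_\ell=2$), so the identification must be read in the rack/principal-realization setting the authors intend --- and passing to a simultaneous eigenbasis in $(b)$ is the right fix for the fact that the originally chosen basis need not be diagonal.
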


Important examples of Nichols algebras come from the 
theory of quantum groups. As shown in \cite{AS2}, they play
a crucial role in the classification of finite dimensional 
pointed Hopf algebras over $\Bbbk$, via the lifting method.
For more details 
on Nichols algebras see \cite{AS}, \cite{H} and references therein; and for
explicit examples with $\Ga = \dm$
and $\Ga = \sn$ see  Subsections \ref{subsec:ydnadm} and \ref{subsec:ydnasn}.

\subsection{Bosonization and Hopf algebras with a projection}
Let $H$ be a Hopf algebra and $R$ a braided Hopf algebra in $\ydh$. 
The procedure to obtain a 
usual Hopf algebra from $R$ and $H$ is called 
the Majid-Radford product or \emph{bosonization}, and it is usually
denoted by $R \#H$. As vector spaces $R \# H = R\otimes H$, and the
multiplication and comultiplication are given by the smash-product 
and smash-coproduct,
respectively. That is, for all $r, s \in R$ and $g,h \in H$
\begin{align*}
(r \# g)(s \#h) & = r(g_{(1)}\cdot s)\# g_{(2)}h,\\ 
\com(r \# g) & =r^{(1)} \# (r^{(2)})_{(-1)}g_{(1)} \ot 
(r^{(2)})_{(0)}\# g_{(2)},
\end{align*}
where $\com_{R}(r) = r^{(1)}\ot r^{(2)}$ denotes the comultiplication in $R\in \ydh$.
If $r\in R$ and $h\in H$, then we identify $r=r\# 1$ and  
$h=1\# h$; in particular we have $rh=r\# h$ and $hr=h_{(1)}\cdot r\# h_{(2)}$.
Clearly, the map $\iota: H \to R\#H$ given by $\iota(h) = 1\#h$ for all
$h\in H$ is an injective Hopf algebra map, and the map $\pi: R\#H \to H$ 
given by $\pi(r\#h) = \eps_{R}(r)h$ for all $r\in R$, $h\in H$
is a surjective Hopf algebra map such that $\pi \circ \iota = \id_{H} $. 
Moreover, it holds that $R = (R\#H)^{\co \pi}$. 

Conversely, let $A$ be a Hopf algebra with bijective antipode and
$\pi: A\to H$ a Hopf algebra epimorphism admitting 
a Hopf algebra section $\iota: H\to A$ such that $\pi\circ\iota =\id_{H}$.
Then $R=A^{\co\pi}$ is a braided Hopf algebra in $\ydh$ and $A\simeq R\# H$
as Hopf algebras.

\subsection{The Lifting Method}\label{subsec:lifting-method}
The \textit{Lifting Method} was introduced by Andruskiewitsch and
Schneider, see \cite{AS}. It is one of the
more general results concerning the structure of Hopf algebras and 
became a powerful 
tool for classifying finite dimensional pointed Hopf algebras, as shown 
in \cite{AS2}, \cite{AHS}, \cite{GG},  \cite{FG}, among others.
 
Let $H$ be a finite dimensional pointed Hopf algebra with
coradical $H_{0} = \k G(H)$ and
$\gr H = \oplus_{n\ge 0}\gr H(n)$ the associated
graded Hopf algebra.
If $\pi:\gr H\to H_0$ denotes the homogeneous
projection, then $R= (\gr H)^{\co \pi}$ is called the \emph{diagram} of
$H$. It is a braided Hopf algebra in $\ydho$ and it is a
graded sub-object of $\gr H$. The linear space
$R(1)$, with
the braiding from $\ydho$, is called the \emph{infinitesimal
braiding} of $H$ and coincides with the subspace of primitive
elements $\mathcal{P}(R)$.
This braiding is the key to the structure of pointed 
Hopf algebras.
It turns out that the Hopf algebra $\gr H$ is the
bosonization
$\gr H \simeq R\# \k G(H)$ and the
subalgebra of $R$ generated by $V$ is isomorphic to the Nichols
algebra $\toba(V)$, see Subsection \ref{subsec:ydCG-nichols-alg}.

Let $\Ga$ be a finite group. 
The main steps of the lifting method are:
\begin{itemize}
\item determine all $V\in \ydga$ such that the 
Nichols algebra $\toba(V)$ is finite dimensional,

\item for such $V$, compute all Hopf algebras $H$ such
that $\gr H\simeq \toba(V)\# \k\Ga$. We call $H$ a
\emph{lifting} of $\toba(V)$ over $\Ga\simeq G(H)$.

\item Prove that any finite dimensional pointed Hopf algebras with group 
of group-likes $\Ga$
is generated by group-likes and skew-primitives.
\end{itemize}

Using this method, it was possible to classify finite dimensional
nontrivial pointed Hopf algebras over abelian groups with order 
relative prime to $210$, over the symmetric groups $\s_{3}$ and 
$\s_{4}$ and over the dihedral groups $\dm$ with $m=4t$, $t\geq 3$, see
\textit{loc.\ cit.} 

\subsection{Deforming cocycles}\label{subsec:def-cocycles}
Let $A$ be a Hopf algebra.
Recall that a convolution invertible linear map 
$\sigma $ in $\Hom_{\Bbbk}(A\ot A, \Bbbk)$
is a  
\textit{normalized multiplicative 2-cocycle} if 
$$ \sigma(b_{(1)},c_{(1)})\sigma(a,b_{(2)}c_{(2)}) =
\sigma(a_{(1)},b_{(1)})\sigma(a_{(2)}b_{(2)},c)  $$
and $\sigma (a,1) = \eps(a) = \sigma(1,a)$ for all $a,b,c \in A$,
see \cite[Sec. 7.1]{M}.
In particular, the inverse of $\sigma $ is given by 
$\sigma^{-1}(a,b) = \sigma(\cS(a),b)$ for all $a,b\in A$ and the 2-cocycle
condition is equivalent to 
\begin{equation}\label{eq:2-cocycle}
(\eps  \ot \sigma) * \sigma ( \id_{A} \ot m) = (\sigma \ot \eps) * \sigma (m \ot \id_{A}) 
\end{equation}
and $\sigma (\id_{A}, 1) = \eps = \sigma(1,\id_{A})$. 

Using a $2$-cocycle $\sigma$ it is possible to define 
a new algebra structure on $A$ by deforming the multiplication,
which we denote by $ A_{\sigma} $. Moreover, $A_{\sigma}$ is indeed
a Hopf algebra with 
$A = A_{\sigma}$ as coalgebras,  
deformed multiplication
$m_{\sigma} = \sigma * m * \sigma^{-1} : A \ot A \to A$
given by $$m_{\sigma}(a,b) = a\cdot_{\sigma}b = \sigma(a_{(1)},b_{(1)})a_{(2)}b_{(2)}
\sigma^{-1}(a_{(3)},b_{(3)})\qquad\text{ for all }a,b\in A,$$ 
and antipode
$\cS_{\sigma} = \sigma * \cS * \sigma^{-1} : A \to A$ given by (see \cite{doi} for details)
$$\cS_{\sigma}(a)=\sigma(a_{(1)},\cS(a_{(2)}))\cS(a_{(3)})
\sigma^{-1}(\cS(a_{(4)}),a_{(5)})\qquad\text{ for all }a\in A.$$

\subsubsection{Deforming cocycles for graded Hopf algebras}
Assume now $A=\oplus_{n\geq 0} A_n$ is a graded Hopf algebra and let 
$\eta \in \Hom_{\Bbbk}(A\otimes A,\Bbbk)$ be a Hochschild
2-cocycle on $A$, that is 
$$\eps(a)\eta(b,c)+\eta(a,bc) = \eta(a,b)\eps(c)+\eta(ab,c) \qquad \text{for all }
a,b,c\in A.$$
If we assume further that $\eta|_{A\ot A_0+A_0\ot A}=0$,
then
$$
\sigma=e^\eta=\sum_{i=0}^\infty \frac{\eta^{* i}}{i!}\colon A\otimes A\to \Bbbk
$$
is a well-defined convolution invertible map with convolution
inverse $e^{-\eta}$; moreover, 
\textit{often}
$e^\eta$ will be a multiplicative 2-cocycle. For
instance, this happens whenever $\eta(\id\ot m)$ and $\eta(m\ot \id)$ commute 
(with respect to the convolution product) with $\eps\ot \eta$ and
$\eta\ot \eps$, respectively. Also note that if $\eta*\eta=0$, then
$e^\eta=\eps+\eta$.

The following result is well-known in the cocommutative setting \cite{Sw} and 
it is a generalization of a result from \cite{GZ}. Although the result
holds for more general graded algebras, we state it for the case of our interest where
$A=\toba(V)\#H$ is a graded Hopf algebra given by the 
bosonization of a Nichols algebra $\toba(V)$
with a Hopf algebra $H$.

\begin{lema}\cite[Lemma 4.1]{GM}\label{le:cond-comm-cocycle}
Let $A=\toba(V)\#H$ be a bosonization of a Nichols algebra $\toba(V)$
with a Hopf algebra $H$.
If $\eta\in \Hom_{\Bbbk}(A\otimes A,\Bbbk)$ is a Hochschild $2$-cocycle such that 
$\eta(\id\ot m)$ commutes with $\eps\ot \eta$ and
$\eta(m\ot \id)$ commutes with $\eta\ot\eps$ in the convolution algebra 
$\Hom_\Bbbk(A\otimes A\otimes A, \Bbbk)$, then
$e^\eta$ is a multiplicative $2$-cocycle.
\qed
\end{lema}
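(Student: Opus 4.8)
The plan is to reduce the multiplicative $2$-cocycle identity \eqref{eq:2-cocycle} for $\sigma=e^{\eta}$ to the additive Hochschild $2$-cocycle identity for $\eta$, by exploiting the fact that exponentiation converts sums of commuting elements into convolution products. Throughout I work in the convolution algebra $\Hom_\Bbbk(A\ot A\ot A,\Bbbk)$, whose unit is $\eps\ot\eps\ot\eps$ and in which the relevant exponentials converge because $\eta$ vanishes on $A\ot A_{0}+A_{0}\ot A$ and $A$ is graded.

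First I would record that the four operations
$$\eta\mapsto\eps\ot\eta,\qquad \eta\mapsto\eta\ot\eps,\qquad f\mapsto f(\id\ot m),\qquad f\mapsto f(m\ot\id)$$
are all homomorphisms of convolution algebras. For the first two this is a direct check using that $\eps$ is an algebra map and that the comultiplication of $A\ot A\ot A$ is componentwise; for the last two it follows because $m\colon A\ot A\to A$, and hence $\id\ot m$ and $m\ot\id$, are morphisms of coalgebras (as $A$ is a bialgebra), so precomposition preserves convolution products. Since each map sends $\eta$ to an element that again vanishes on the degree-zero part, applying these homomorphisms to $\sigma=e^{\eta}$ yields
$$\eps\ot\sigma=e^{X},\qquad \sigma(\id\ot m)=e^{Y},\qquad \sigma\ot\eps=e^{X'},\qquad \sigma(m\ot\id)=e^{Y'},$$
where $X=\eps\ot\eta$, $Y=\eta(\id\ot m)$, $X'=\eta\ot\eps$ and $Y'=\eta(m\ot\id)$.

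Next I would invoke the hypotheses: $X$ commutes with $Y$ and $X'$ commutes with $Y'$. For commuting elements of a complete filtered convolution algebra, the Cauchy product of the exponential series gives $e^{X}*e^{Y}=e^{X+Y}$ and $e^{X'}*e^{Y'}=e^{X'+Y'}$, so the two sides of \eqref{eq:2-cocycle} become $e^{X+Y}$ and $e^{X'+Y'}$ respectively. It then remains only to prove the \emph{additive} identity $X+Y=X'+Y'$. Evaluated on $a\ot b\ot c$ this reads $\eps(a)\eta(b,c)+\eta(a,bc)=\eta(a,b)\eps(c)+\eta(ab,c)$, which is precisely the Hochschild $2$-cocycle condition assumed for $\eta$. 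Finally, the normalization $\sigma(a,1)=\eps(a)=\sigma(1,a)$ follows because $1\in A_{0}$ forces $\eta(a,1)=0=\eta(1,a)$, so every convolution power $\eta^{*i}$ with $i\ge1$ vanishes on $a\ot1$ and on $1\ot a$, leaving only the degree-zero term $\eps$.

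The only genuine subtlety, and the step I expect to require the most care, is the passage $e^{X}*e^{Y}=e^{X+Y}$: it uses the \emph{full} commutativity $[X,Y]=0$ supplied by the hypothesis, and it requires justifying the termwise rearrangement of the exponential series in the completed convolution algebra (which is legitimate because $\eta$ strictly raises total degree, making the series locally finite). Once the bookkeeping that the four maps above are convolution homomorphisms is in place, the remainder is a transparent translation of the multiplicative cocycle condition into the linear Hochschild one.
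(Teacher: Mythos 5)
Your proof is correct. The paper itself gives no argument for this lemma---it is quoted verbatim from \cite[Lemma 4.1]{GM} with a \qed---and your reduction (the four convolution-algebra homomorphisms, $e^{X}*e^{Y}=e^{X+Y}$ for the commuting pairs supplied by the hypothesis, and the identification of $X+Y=X'+Y'$ with the Hochschild cocycle identity, with local finiteness guaranteed by $\eta|_{A\ot A_{0}+A_{0}\ot A}=0$) is exactly the standard argument underlying the cited result.
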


\section{Cocycles on bosonizations of Nichols algebras}\label{sec:cocycles-boson}
In this section we discuss sufficient conditions for a Hochschild $2$-cocycle on
$\toba(V)\#H$ to
satisfy the conditions of Lemma \ref{le:cond-comm-cocycle}; and thus inducing
a multiplicative cocycle via the exponential map. 
We define it by 
extending a linear functional on $V\ot V$ invariant under the action of $H$.

From now on we assume that $A=\toba(V)\# H$, where
$H$ is Hopf algebra with bijective antipode, $V \in \ydh$ and 
$\toba(V)\in\ydh$ is the Nichols algebra of $V$.  
Since $\toba(V)$ is graded, we have that $A$ is also graded
with the gradation given by $A_{0}= H$ and $A_{n}= \toba^{n}(V)\# H$.

\subsection{Hochschild cocycles on $\toba(V)\#H$ from 
$H$-invariant lineal functionals on $V\ot V$}\label{subsec:hoch-H-inv}
Let $\eta:V\ot V \to \Bbbk $ be a linear map. Then we can define a Hochschild
$2$-cocycle on $\toba(V)$ by 
$$\eta(\toba^{m}(V)\otimes \toba^{n}(V))=0
\text{ if }(m,n)\not= (1,1).$$ 
Such a map is called an \emph{$\eps$-biderivation}, since 
it is an $\eps$-derivation on each variable, that is, 
we have $\eta(1,-)=0=\eta(-,1)$ and
$\eta(xy,-)=0=\eta(-,xy)$ for all $x,y\in \toba(V)$ 
such that $\ep(x)=0=\ep(y)$.

Since $H$ acts on $V$, we have that 
$ H $
acts on the set of all linear maps
$ \eta: V\ot V \to \Bbbk $ by 
$ \eta^{h}(x,y) =  \eta(h_{(1)}\cdot x, h_{(2)}\cdot y)$ for all
$ h\in H, x,y\in V $. We say that
$ \eta $ is \emph{$ H $-invariant} if
$ \eta^{h}= \eta $ for all $ h\in H $.

The following lemma tell us how to construct a Hochschild $2$-cocycle
on $A$ from an $H$-invariant linear functional on $V\ot V$. Its proof is straighforward 
and it is left to the reader.
\begin{lema}\label{lem:funtional-2cocycle}
Let $\eta:V\ot V \to \Bbbk$ be an $H$-invariant linear map. Then the 
map $\teta\colon A\ot A\to\k$ defined by  
$\teta(A_m\otimes A_n)=0$ if $(m,n)\not= (1,1)$ 
and 
$$\teta(x\# h,y\# h')=\eta(x, h\cdot y) \varepsilon(h') \text{ for 
all }x,y\in V \text{ and }h,h'\in H,$$
is an $H$-invariant Hochschild $2$-cocycle on $A$ that satisfies 
$\tilde{\eta}|_{A_{0}\ot A + A\ot A_{0}} =0 $.\qed
\end{lema}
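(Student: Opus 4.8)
The statement packages three claims: the vanishing $\teta|_{A_0\ot A+A\ot A_0}=0$, the Hochschild $2$-cocycle identity, and the $H$-invariance of $\teta$. My plan is to reduce everything to bidegree bookkeeping, using that $\teta$ is supported in bidegree $(1,1)$, that the product of $A$ is graded ($A_i\cdot A_j\subseteq A_{i+j}$), and that $\eps$ is supported in degree $0$ (because $\toba(V)$ is connected, so $\eps(A_n)=0$ for $n\ge 1$). The vanishing claim is then immediate: an element of $A_0\ot A_n$ has its first tensor-leg in degree $0$ and one of $A_m\ot A_0$ has its second leg in degree $0$, so neither lies in bidegree $(1,1)$ and $\teta$ kills both.

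For the cocycle identity $\eps(a)\teta(b,c)+\teta(a,bc)=\teta(a,b)\eps(c)+\teta(ab,c)$, multilinearity lets me assume $a\in A_p$, $b\in A_q$, $c\in A_r$ homogeneous. Inspecting when each of the four terms can be nonzero --- $\eps(a)$ forces $p=0$, $\teta(b,c)$ forces $q=r=1$, $\teta(a,bc)$ forces $p=1$ and $q+r=1$, and so on --- shows that the identity is the trivial $0=0$ unless $(p,q,r)$ is one of $(0,1,1)$, $(1,0,1)$, $(1,1,0)$, and that in each of these exactly two of the four terms survive.

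It then remains to check these three cases by writing the degree-$1$ arguments as $x\# g$, $y\# g'$ with $x,y\in V$ and the degree-$0$ argument as $h\in H$, and using the smash-product formulas $h(x\# g)=(h_{(1)}\cdot x)\# h_{(2)}g$ and $(x\# g)h=x\# gh$. The cases $(1,1,0)$ and $(1,0,1)$ reduce to the $\eps$-biderivation identities $\teta(a,bc)=\teta(a,b)\eps(c)$ and $\teta(a,bc)=\teta(ab,c)$; both fall out of the definition of $\teta$ together with multiplicativity of $\eps$ and the counit axiom $\sum h_{(1)}\eps(h_{(2)})=h$, and neither uses any hypothesis on $\eta$. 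The case $(0,1,1)$, i.e. $\eps(h)\teta(x\# g,y\# g')=\teta\big(h(x\# g),\,y\# g'\big)$, is the only place the hypothesis enters: expanding the right-hand side gives $\sum\eta(h_{(1)}\cdot x,\,h_{(2)}\cdot(g\cdot y))\eps(g')$, and the $H$-invariance of $\eta$, in the form $\sum\eta(h_{(1)}\cdot x,h_{(2)}\cdot z)=\eps(h)\eta(x,z)$, collapses this to $\eps(h)\eta(x,g\cdot y)\eps(g')=\eps(h)\teta(x\# g,y\# g')$, as required.

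Finally, the $H$-invariance of $\teta$ --- taken with respect to the adjoint action of $H\hookrightarrow A$, which restricts on $V$ to the given Yetter-Drinfeld action --- follows by the same bookkeeping: only the bidegree-$(1,1)$ component contributes, and there the required identity is exactly the computation already carried out in case $(0,1,1)$. I expect no genuinely hard step; the only thing demanding care is the bidegree bookkeeping of the second paragraph and making the comultiplication of $h$ interact correctly with the invariance of $\eta$ in case $(0,1,1)$ --- everything else is formal manipulation in the smash product.
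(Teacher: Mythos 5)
Your proof is correct. The paper itself omits the argument (declaring it ``straightforward and left to the reader''), and what you give is exactly the intended direct verification: the bidegree bookkeeping correctly isolates the three nontrivial cases $(0,1,1)$, $(1,0,1)$, $(1,1,0)$ of the Hochschild identity, and you rightly observe that the $H$-invariance of $\eta$ (in its linearized form $\sum\eta(h_{(1)}\cdot x,h_{(2)}\cdot z)=\eps(h)\eta(x,z)$, which is what the paper's $\eta^h=\eta$ amounts to after extending from group-likes) enters only in the case where $H$ multiplies from the left, i.e.\ $(0,1,1)$. Nothing is missing.
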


\begin{obs}
Assume $V$ is finite dimensional and let   
$(x_i)_{i\in I}$ be a basis of $V$ and $(d_i)_{i\in I}$ the dual basis of
$V^*$. Then 
any linear combination of the tensor products
$d_i\otimes d_j$ induces 
a Hochschild 2-cocycle on $\toba(V)$. 
In particular, by \eqref{eq:assumption-yd} the map
$\eta = \sum_{i,j\in I} a_{ij}d_{i}\ot d_{j}$ 
is $\Ga$-invariant if 
$$
a_{k,\ell} = 
\chi_{k}(g)\chi_{\ell}(g)a_{\sigma(g)(k),\sigma(g)(\ell)} 
\text{ for all } g\in \Ga, k,\ell \in I.
$$
\end{obs}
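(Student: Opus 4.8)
The plan is to treat the two assertions separately, both of which follow directly from the constructions recalled in Subsection~\ref{subsec:hoch-H-inv}. For the first assertion, observe that a linear combination $\eta = \sum_{i,j\in I} a_{ij}\, d_{i}\ot d_{j}$ is nothing but a linear functional on $V\ot V$, since $(d_i)_{i\in I}$ is the dual basis of $(x_i)_{i\in I}$. Hence the prescription $\eta(\toba^{m}(V)\ot\toba^{n}(V)) = 0$ for $(m,n)\neq (1,1)$ turns $\eta$ into an $\eps$-biderivation, which is precisely a Hochschild $2$-cocycle on $\toba(V)$; no further computation is needed.

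For the second assertion I would first reduce the $\Ga$-invariance to a condition indexed by group elements. Since $h\mapsto\eta^{h}$ is linear and the elements of $\Ga$ span $\k\Ga = H$, the equality $\eta^{h} = \eta$ for all $h\in H$ is equivalent to $\eta^{g} = \eta$ for all $g\in\Ga$. As each $g\in\Ga$ is group-like, $\com(g) = g\ot g$, so that $\eta^{g}(x,y) = \eta(g\cdot x, g\cdot y)$.

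Evaluating on basis vectors and using assumption~\eqref{eq:assumption-yd}, namely $g\cdot x_{k} = \chi_{k}(g)\, x_{\sigma(g)(k)}$, I would then compute
$$
\eta^{g}(x_{k}, x_{\ell}) = \eta(g\cdot x_{k},\, g\cdot x_{\ell})
= \chi_{k}(g)\chi_{\ell}(g)\,\eta(x_{\sigma(g)(k)}, x_{\sigma(g)(\ell)})
= \chi_{k}(g)\chi_{\ell}(g)\, a_{\sigma(g)(k),\sigma(g)(\ell)},
$$
where the last step uses $\eta(x_{p},x_{q}) = a_{pq}$, coming from $d_{i}(x_{j}) = \delta_{ij}$. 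Since $\eta(x_{k},x_{\ell}) = a_{k\ell}$, the identity $\eta^{g} = \eta$ holds for all $g\in\Ga$ exactly when $a_{k\ell} = \chi_{k}(g)\chi_{\ell}(g)\, a_{\sigma(g)(k),\sigma(g)(\ell)}$ for all $k,\ell\in I$ and $g\in\Ga$, which is the stated condition; in fact this argument yields the equivalence, not merely the stated sufficiency.

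There is no genuine obstacle here: the only points requiring a little care are the reduction from $H$ to $\Ga$ (which relies on the linearity of $h\mapsto\eta^{h}$ together with the group-like property $\com(g)=g\ot g$) and the bookkeeping of both characters $\chi_{k},\chi_{\ell}$ and the permutation $\sigma(g)$ when applying~\eqref{eq:assumption-yd}. Everything else is a direct evaluation against the dual basis.
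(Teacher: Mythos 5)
Your proposal is correct and follows exactly the route the paper intends: the first claim is immediate from the $\eps$-biderivation construction of Subsection~\ref{subsec:hoch-H-inv}, and the second is the direct evaluation of $\eta^{g}$ on basis vectors via \eqref{eq:assumption-yd}, which is precisely the computation the paper carries out in its concrete instances (Lemmas~\ref{lem:inv-bider-I} and~\ref{lem:inv-bider-q-1}). Your observation that the condition is in fact an equivalence (and that invariance need only be checked on group elements, where $\com(g)=g\ot g$) is a correct sharpening of the stated sufficiency.
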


\subsection{On the commuting conditions of $H$-invariant Hochschild $2$-cocycles}
In the remaining of this section we study when an exponentiation 
$e^{\tilde{\eta}}$ of a Hochschild $2$-cocycle induced by an $H$-invariant
lineal functional $\eta$ on $V\ot V$ is a multiplicative $2$-cocycle. In particular, 
it is always the case if $H$ is semisimple and the braiding of $V$ is symmetric.

Denote by $ c $ the braiding of $ V \in \ydh$. It induces an action of the braid group $\mathbb{B}_{n}$
on $V^{\ot n}$. If $\pi \in \mathbb{B}_{n}$, we denote by $c_\pi:
V_1\otimes\ldots\otimes V_n \to V_{\pi(1)}\otimes\ldots\otimes V_{\pi(n)}$ the map
induced by this action.  In particular we write
\begin{align*}
&c_{231} = (\id\ot c)(c\ot \id),\quad
c_{1324} = (\id\ot c\ot \id),\quad
c_{2413} = (\id\ot c\ot \id)(c\ot c),\\
&c_{1423} = (\id\ot c\ot \id)(\id\ot \id\ot c)\mbox{ and }
c_{2314} = (\id\ot c\ot \id)(c\ot\id\ot \id).
\end{align*}

Let $\eta:V\ot V\to \Bbbk$ be an $H$-invariant linear functional and denote
again by $\eta$ the map defined on $\toba(V)$, then we have that
\begin{equation}
c(\id\ot\eta)=(\eta\ot\id)(1\ot c)(c\ot 1) = (\eta\ot\id)c_{231} , \label{eq5}
\end{equation}
that is, for all $a,b,c\in V$ we have:
\begin{equation}\label{eq6}
\eta(b,c)\otimes a = \eta(a_{(-1)}b,c)\otimes a_{(0)}   
 = (\eta\ot\id)c_{231}(a\ot b\ot c).
\end{equation}
Indeed, $(\eta\ot\id)c_{231}(a\ot b\ot c) 
= \eta(a_{(-2)}\cdot b, a_{(-1)}\cdot c)\otimes a_{(0)} = 
\eta^{a_{(-1)}}(b,c)\otimes a_{(0)} =
\eta(b,c)\otimes \ep(a_{(-1)})a_{(0)} = 
c(\id\ot\eta)(a\ot b\ot c) $ for all $a,b,c\in V$.

Using the definition of $\teta$ we also have 
for all $a,b\in A$ and $c\in V$ that
\begin{equation}\label{eq7}
\teta(a,b)\ot c = \teta(a,bc_{(-1)})\ot c_{(0)}.
\end{equation}

The following lemma shows that both conditions on 
the commutativity of the maps in Lemma 
\ref{le:cond-comm-cocycle}, $(b)$ and $(c)$ below, are equivalent. Note
that, as a consequence, we need only to verify equalities on $V^{\ot 4}$.

\begin{lema} \label{lem:cond-eps-biderivations} 
Let $\eta:V\ot V\to \Bbbk$ be an $H$-invariant linear functional.
The following are equivalent:
\begin{enumerate}
\item[$(a)$] The following conditions hold on $V^{\ot 4}$:
\begin{eqnarray}
(\eta\otimes \eta) c_{1324} &=& (\eta\otimes\eta) 
c_{2413}\mbox{ and }  \label{eq1}\\
(\eta\otimes \eta) c_{1423} &=& (\eta\ot\eta) c_{2314}. \label{eq2}
\end{eqnarray}
\item[$(b)$] The following condition holds on $A^{\ot 3}$:
$(\ep\ot\teta)*\teta(\id\ot m) =
 \teta(\id\ot m) * (\ep\ot\teta)$.
\item[$(c)$] The following condition holds on $A^{\ot 3}$:
$(\teta\ot\ep)*\teta(m\ot\id) = 
\teta(m\ot\id)*(\teta\ot\ep)$.
\end{enumerate}
\end{lema}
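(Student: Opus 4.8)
The plan is to establish $(a)\Leftrightarrow(b)$ and $(a)\Leftrightarrow(c)$ separately, in each case reducing an identity of functionals on $A^{\ot 3}$ to one on $V^{\ot 4}$. Writing out the convolution products, condition $(b)$ is the assertion that
\[
\teta(b_{(1)},c_{(1)})\,\teta(a,b_{(2)}c_{(2)})=\teta(a,b_{(1)}c_{(1)})\,\teta(b_{(2)},c_{(2)})
\]
holds for all $a,b,c\in A$, while condition $(c)$ reads
\[
\teta(a_{(1)},b_{(1)})\,\teta(a_{(2)}b_{(2)},c)=\teta(a_{(1)}b_{(1)},c)\,\teta(a_{(2)},b_{(2)}).
\]
First I would exploit that $A=\toba(V)\#H$ is graded and, by Lemma \ref{lem:funtional-2cocycle}, that $\teta$ vanishes off bidegree $(1,1)$. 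A degree count then shows that both sides of $(b)$ vanish unless $a\in V$ and $\deg b+\deg c=3$ with $\deg b,\deg c\ge 1$; thus only the bidegrees $(\deg b,\deg c)\in\{(1,2),(2,1)\}$ survive. Since $\toba(V)$ is generated in degree one and $\toba^{2}(V)$ is a quotient of $V^{\ot 2}$, testing $(b)$ on $a\ot b\ot c$ with $(b,c)\in V\ot\toba^{2}(V)$ and $(b,c)\in\toba^{2}(V)\ot V$ amounts to testing on $V^{\ot 4}$; the symmetric count reduces $(c)$ to $V^{\ot 4}$ as well, now with $c\in V$ and $\deg a+\deg b=3$.

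For the explicit computation I need the coproduct of $A$ in low degrees. From the bosonization formula one obtains $\Delta_A(v)=v\ot 1+v_{(-1)}\ot v_{(0)}$ for $v\in V$, and for $v_{1}v_{2}\in\toba^{2}(V)$ the braided coproduct of $\toba(V)$ contributes, beyond the extremal terms, the bidegree-$(1,1)$ summand $v_{1}\ot v_{2}+c(v_{1}\ot v_{2})$ with $c(v_{1}\ot v_{2})=(v_{1})_{(-1)}\cdot v_{2}\ot (v_{1})_{(0)}$, each decorated by a further $H$-coaction after bosonization. Substituting these into the two localized identities and discarding every Sweedler summand outside bidegree $(1,1)$ leaves, on each side, a sum of terms of the form $\teta(-,-)\,\teta(-,-)$ in which the arguments carry coaction factors $v_{(-1)}\in H$. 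The decisive step is to absorb these factors: by the $H$-invariance of $\eta$, identities \eqref{eq6} and \eqref{eq7} let me move each coaction past $\teta$, converting every decorated term into $(\eta\ot\eta)$ precomposed with a pure braid composite. Carried out on the two surviving bidegrees, this turns $(b)$ into precisely the pair \eqref{eq1} and \eqref{eq2} on $V^{\ot 4}$, so that $(b)\Leftrightarrow(a)$; the same manipulation, with the first and last tensor slots interchanged, gives $(c)\Leftrightarrow(a)$. Combining the two equivalences yields $(a)\Leftrightarrow(b)\Leftrightarrow(c)$.

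I expect the main obstacle to be the bookkeeping in this last step rather than any conceptual difficulty. The bosonized coproducts thread several coaction factors through the arguments of $\teta$, and one must apply \eqref{eq6}--\eqref{eq7} in exactly the right positions so that all of them are absorbed and the four braid maps $c_{1324},c_{2413},c_{1423},c_{2314}$ emerge with the correct assignment of tensor factors. Matching the resulting composites to the normal forms recorded in \eqref{eq1} and \eqref{eq2}, rather than to some a priori different permutation of $V^{\ot 4}$, and checking that the $H$-invariance of $\eta$ indeed both strips the group-algebra decorations and makes it legitimate to test on $V^{\ot 4}$ with trivial $H$-part, is the delicate point; once this is arranged the remaining verification is a finite computation on $V^{\ot 4}$.
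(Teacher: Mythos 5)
Your proposal is correct and follows essentially the same route as the paper's proof: reduce by the grading to the bidegrees $(1,1,2)$ and $(1,2,1)$ (resp.\ $(2,1,1)$ and $(1,2,1)$ for $(c)$), evaluate on products of elements of $V$, expand the coproduct of the degree-two factor, and absorb the coaction decorations using $H$-invariance together with \eqref{eq6} and \eqref{eq7}. The only caveat — exactly the bookkeeping subtlety you anticipated — is that the two surviving bidegrees do not produce \eqref{eq1} and \eqref{eq2} separately: one gives \eqref{eq1} and the other gives the combination $(\eta\ot\eta)(c_{2314}+c_{2413}-c_{1324}-c_{1423})=0$, and only the two together are equivalent to $(a)$.
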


\begin{proof} 
Throughout the proof we use that fact that for $v\in V\subseteq A$ we have
$\Delta(v) = v_{(1)}\otimes v_{(2)} = v_{(-1)}\ot v_{(0)}+v\ot 1$.
We first show that $(a)$ is equivalent to $(b)$.  
Note that it is both necessary and sufficient to verify 
$(b)$ by evaluating at $\toba(V)_m\ot\toba(V)_n\ot\toba(V)_p$ 
for all $(m,n,p)$.  Unless $(m,n,p)=(1,2,1)$ or $(m,n,p)=(1,1,2)$ 
both sides trivially give $0$.
Now evaluation at $a\otimes b\otimes cd$ for $a,b,c,d\in V$ yields
that $\teta(b, (cd)_{(1)})\teta(a, (cd)_{(2)}) = 
\teta(b_{(0)}, (cd)_{(2)})\teta(a, b_{(-1)}(cd)_{(1)})$.
After expanding $\Delta(cd)$ and using the fact that 
$\eta$ is $H$-invariant we get
$$
\eta(b,c)\eta(a,d)+\eta(b, c_{(-1)}\cdot d)\eta(a,c_{(0)})  =
\eta(b_{(0)},d)\eta(a, b_{(-1)}\cdot c) + \eta(b_{(0)},c_{(0)})
\eta(a, (b_{(-1)}c_{(-1)})\cdot d).
$$
Since by $(b)$ we have that 
$(\eta\otimes\eta)(a,d,b,c)=(\eta\ot\eta)c_{2314}(a,b,c,d)$,
this is equivalent to 
\begin{equation}\label{eqone}
(\eta\otimes \eta)(c_{2314}+c_{2413}-c_{1324}-c_{1423})=0.
\end{equation}
We now examine evaluation of $(b)$ at 
$a\otimes bc\otimes d$ for $a,b,c,d\in V$.  
Using (\ref{eq7}) we get:
\begin{eqnarray}\label{eqthree}
\teta((bc)_{(1)},d)\teta(a, (bc)_{(2)}) = \teta((bc)_{(2)}, d)\teta(a,(bc)_{(1)}).
\end{eqnarray}
Expanding $\Delta(bc)$ and using equations \eqref{eq6} 
and \eqref{eq7} as well as the definition of $\teta$ we get
\begin{equation*}
\eta(b, c_{(-1)}\cdot d)\eta(a, c_{(0)})+\eta(c,d)\eta(a,b) = 
\eta(c,d)\eta(a,b) + \eta(b_{(0)},d)\eta(a, b_{(-1)}\cdot c),
\end{equation*}
which simplifies to
$\eta(b, c_{(-1)}\cdot d)\eta(a, c_{(0)}) = \eta(a, b_{(-1)}\cdot c)\eta(b_{(0)},d)$,
or equivalently, 
$
(\eta\ot\eta)c_{2413} = (\eta\ot\eta)c_{1324}$, which is 
exactly \eqref{eq1}.
Hence we have that $(b)$ is equivalent to \eqref{eqone} and \eqref{eq1}, and
consequently to $(a)$, 
since   
\eqref{eqone} and \eqref{eq1} 
give \eqref{eq2}.

The equivalence of $(a)$ and $(c)$ works almost exactly the same way.  
For the sake of completeness we provide some intermediate steps.  
We verify $(c)$ by evaluating it at $a\otimes bc\otimes d$ 
and $ab\otimes c\otimes d$ for $a,b,c,d\in V$.  Evaluation at
$a\otimes bc\otimes d$ yields \eqref{eqthree} which is equivalent to \eqref{eq1}.  
Evaluation at $ab\otimes c\otimes d$ and using that 
$\Delta(c) = c\ot 1 + c_{(-1)}\ot c_{(0)}$ yields 
\begin{equation*}
\teta((ab)_{(1)},c)\teta((ab)_{(2)},d) = \teta((ab)_{(2)},c_{(0)})
\teta((ab)_{(1)}, c_{(-1)}\cdot d).
\end{equation*}
This simplifies to
$$
\eta(a, b_{(-1)}\cdot c)\eta(b_{(0)},d)+\eta(b,c)\eta(a,d) = 
\eta(b_{(0)}, c_{(0)})\eta(a, b_{(-1)}c_{(-1)}\cdot d)+
\eta(a,c_{(0)})\eta(b,c_{(-1)}\cdot d),
$$
which by \eqref{eq6}  can be written as
$
(\eta\ot\eta)(c_{1324}+c_{2314}-c_{1423}-c_{2413})=0
$.
We conclude the proof by noting that this equation together with 
\eqref{eq1} are equivalent to (a). 
\end{proof}

For the following lemma, observe that using $ c_{1324} = \id\ot c\ot \id$
and $ c_{2413} = (\id\ot c \ot \id) (c\ot c)$, we get that  \eqref{eq1}
is equivalent to 
$(\eta\ot\eta)(\id\ot c\ot \id) = (\eta\ot\eta)(\id\ot c\ot \id)(c\ot c)$.

The next two results state that the 
conditions in Lemma \ref{lem:cond-eps-biderivations}$(a)$
are always fulfilled when $H$ is semisimple and the braiding is symmetric.

\begin{lema} \label{lemma-involution-1}
If $S_{H}^{2}=\id_{H}$, 
then \eqref{eq1} is equivalent to either of the following equations on $V^{\ot 4}$:
\begin{eqnarray}
(\eta\ot\eta)(\id\ot c\ot \id) &=& (\eta\ot\eta)(\id\ot c^{-1} \ot \id), \\
(\eta\ot\eta)(\id\ot c^2\ot \id) &=& \eta\ot\eta.
\end{eqnarray}
In particular, \eqref{eq1} is always satisfied when $c^{2}=\id_{V\ot V}$ 
and $H$ is semisimple.  
\end{lema}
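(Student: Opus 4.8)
The plan is to reduce the entire statement to the single identity
\[
(\eta\ot\eta)(\id\ot c\ot\id)(c\ot c)=(\eta\ot\eta)(\id\ot c^{-1}\ot\id)
\]
on $V^{\ot 4}$, which I will call $(\star)$. Granting $(\star)$, the reformulation of \eqref{eq1} recorded just before the statement, namely $(\eta\ot\eta)(\id\ot c\ot\id)=(\eta\ot\eta)(\id\ot c\ot\id)(c\ot c)$, turns verbatim into the first displayed equation, so \eqref{eq1} is equivalent to it. The two displayed equations are in turn equivalent to each other for a purely formal reason: composing the first on the right with the invertible map $\id\ot c\ot\id$ and using $c^{-1}c=\id_{V\ot V}$ yields $(\eta\ot\eta)(\id\ot c^{2}\ot\id)=\eta\ot\eta$, and composing the second on the right with $\id\ot c^{-1}\ot\id$ reverses the implication. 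Everything therefore comes down to $(\star)$.

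To prove $(\star)$ I would evaluate both sides on $a\ot b\ot p\ot d$ with $a,b,p,d\in V$, using the explicit braiding $c(x\ot y)=x_{(-1)}\cdot y\ot x_{(0)}$ together with its inverse, which since $\cS^{2}=\id_{H}$ (so $\cS^{-1}=\cS$) can be written as $c^{-1}(x\ot y)=y_{(0)}\ot\cS(y_{(-1)})\cdot x$. Expanding $(c\ot c)$ and then the central braiding, and writing $\delta(a)=k\ot a_{(0)}$ with $\com(k)=k_{(1)}\ot k_{(2)}$, the left-hand side becomes
\[
\eta(k_{(1)}\cdot b,\ k_{(2)}\,p_{(-1)}\cdot d)\,\eta(a_{(0)},\ p_{(0)}).
\]
By $H$-invariance the first factor equals $\eps(k)\,\eta(b,\ p_{(-1)}\cdot d)$, and since $\eps(k)\,a_{(0)}=a$ the left-hand side of $(\star)$ collapses to $\eta(b,\ p_{(-1)}\cdot d)\,\eta(a,\ p_{(0)})$. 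The right-hand side is read off directly as $\eta(a,\ p_{(0)})\,\eta(\cS(p_{(-1)})\cdot b,\ d)$.

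The two sides are then matched by the adjunction satisfied by any $H$-invariant functional,
\[
\eta(g\cdot x,\ y)=\eta(x,\ \cS(g)\cdot y)\qquad(g\in H),
\]
which one obtains by writing $g=g_{(1)}\eps(g_{(2)})$, replacing $\eps(g_{(2)})$ by $g_{(2)}\cS(g_{(3)})$, and applying invariance to the first two legs $g_{(1)}\ot g_{(2)}=\com(g_{(1)})$. Reading this identity with $g$ replaced by $\cS(p_{(-1)})$ gives $\eta(\cS(p_{(-1)})\cdot b,\ d)=\eta(b,\ \cS^{2}(p_{(-1)})\cdot d)=\eta(b,\ p_{(-1)}\cdot d)$, the last equality being precisely the hypothesis $\cS^{2}=\id_{H}$. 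Hence the two expressions for $(\star)$ agree term by term, which proves $(\star)$ and therefore the asserted equivalences.

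For the final assertion, if $c^{2}=\id_{V\ot V}$ then $c^{-1}=c$ and the first displayed equation is a tautology (equivalently $\id\ot c^{2}\ot\id=\id$ makes the second one read $\eta\ot\eta=\eta\ot\eta$); moreover a semisimple Hopf algebra over a field of characteristic zero is involutory, so $\cS^{2}=\id_{H}$ holds and the equivalence just established applies, whence \eqref{eq1} is automatic. I expect the only delicate point to be the Sweedler bookkeeping inside $(\star)$: keeping straight which leg of the iterated coaction of $a$ (resp.\ of $p$) acts on which tensor slot, and transposing the $H$-action across $\eta$ through the antipode so that the $\cS$ coming from $c^{-1}$ is undone by the $\cS$ produced by invariance, using $\cS^{2}=\id_{H}$.
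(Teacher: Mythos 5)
Your proposal is correct and takes essentially the same route as the paper: both arguments reduce \eqref{eq1} to the single identity $(\eta\ot\eta)c_{2413}=(\eta\ot\eta)(\id\ot c^{-1}\ot\id)$ and verify it by expanding the braidings, cancelling the coaction of the first tensor factor via $H$-invariance, and then transposing the remaining action across $\eta$ using the antipode together with $\cS^{2}=\id_{H}$. Your adjunction identity $\eta(g\cdot x,y)=\eta(x,\cS(g)\cdot y)$ is just a packaged form of the paper's inline insertion of $\eta=\eta^{\cS(c_{(-1)})}$ followed by $\cS(h_{(2)})h_{(1)}=\eps(h)$, so no further comparison is needed.
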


\begin{proof} Note 
that in the case $S$ is an involution, and therefore for $h\in H$ we have
$\ep(h)=S(h_{(2)})h_{(1)}$ (this is used for going from line 
8 to line 9 and for going to the last line from the line above it), we get for all
$a,b,c,d \in $:
\begin{align*}
& (\eta\ot\eta)c_{2413}(a, b, c, d) = (\eta\ot \eta)(\id\ot c\ot\id)
(a_{(-1)}\cdot b, a_{(0)}, c_{(-1)}\cdot d, c_{(0)})\\
&=  \eta(a_{(-2)}\cdot b, a_{(-1)}c_{(-1)}\cdot d)\eta(a_{(0)}, c_{(0)})
= \eta^{a_{(-1)}}(b, c_{(-1)}\cdot d)\eta(a_{(0)}, c_{(0)})
= \eta(b, c_{(-1)}\cdot d)\eta(a, c_{(0)})\\
&
= \eta^{S(c_{(-1)})}(b, c_{(-2)}\cdot d)\eta(a, c_{(0)})
= \eta(S(c_{(-1)})_{(1)}\cdot b, (S(c_{(-1)})_{(2)}c_{(-2)})\cdot d)\eta(a, c_{(0)})\\
& =  \eta(S(c_{(-1)_{(2)}})\cdot b, (S(c_{(-1)_{(1)}})c_{(-2)})
\cdot d)\eta(a, c_{(0)})
=  \eta(S(c_{(-1)})\cdot b, (S(c_{(-2)})c_{(-3)})\cdot d)\eta(a, c_{(0)})\\
& =\eta(S(c_{(-1)})\cdot b, d)\eta(a, c_{(0)})=
 \eta(a, c_{(0)})\eta(S(c_{(-1)})\cdot b, d)
= (\eta\ot\eta)(\id\ot c^{-1}\ot\id)(a,b,c,d).
\end{align*}
\end{proof}

\begin{lema}\label{lemma-involution-2}
Let $\eta=\eta_1\otimes \eta_2$. Then 
\eqref{eq2} is equivalent to the following equations on $V^{\ot 4}$:
\begin{eqnarray}
(\eta\ot\eta)(\id\ot c\ot\id)(c\ot \id\ot\id) &=&  (\eta\ot\eta)(\id\ot c\ot\id)(\id\ot\id\ot c),\\
\eta_1\otimes \eta\otimes\eta_2 &=& (\eta\otimes\eta)(\id\otimes c_{312}).
\end{eqnarray}
Moreover, if $S_{H}^{2}=\id_{H}$ and $c^{2}=\id_{V\ot V}$, 
then \eqref{eq2} is always satisfied.
\end{lema}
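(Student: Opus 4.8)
The plan is to split the statement into three parts: identifying the first displayed identity with \eqref{eq2}, establishing the equivalence with the second one, and then the final ``Moreover''. Throughout I would use the Yetter--Drinfeld formula for $c$ and the $H$-invariance of $\eta$, exactly as in the proofs of Lemmas \ref{lem:cond-eps-biderivations} and \ref{lemma-involution-1}.

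First I would dispose of the formal points. Writing out $c_{2314}=(\id\ot c\ot \id)(c\ot\id\ot \id)$ and $c_{1423}=(\id\ot c\ot \id)(\id\ot \id\ot c)$ shows that the first displayed identity is nothing but \eqref{eq2}. With the convention $c_{312}=(c\ot\id)(\id\ot c)$ one has $\id\ot c_{312}=(\id\ot c\ot \id)(\id\ot \id\ot c)=c_{1423}$, and expanding $\eta=\eta_1\ot\eta_2$ gives $(\eta_1\ot\eta\ot\eta_2)(a\ot b\ot c\ot d)=\eta(a,d)\,\eta(b,c)$; thus the second displayed identity reads $\eta(a,d)\eta(b,c)=(\eta\ot\eta)c_{1423}(a,b,c,d)$. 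To connect it with \eqref{eq2} I would evaluate $(\eta\ot\eta)c_{2314}$ as in the earlier proofs: applying $c\ot\id\ot\id$ and then $\id\ot c\ot\id$ turns $a\ot b\ot c\ot d$ into $(a_{(-2)}\cdot b)\ot(a_{(-1)}\cdot c)\ot a_{(0)}\ot d$, so that $(\eta\ot\eta)c_{2314}(a,b,c,d)=\eta^{a_{(-1)}}(b,c)\,\eta(a_{(0)},d)=\eta(b,c)\,\eta(a,d)$, the action of $a_{(-1)}$ being absorbed by $H$-invariance. Hence $(\eta\ot\eta)c_{2314}=\eta_1\ot\eta\ot\eta_2$, and \eqref{eq2} becomes precisely the second displayed identity. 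This settles the equivalence.

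For the final assertion I would avoid expanding $(\eta\ot\eta)c_{1423}$ directly, and instead transport \eqref{eq1} into \eqref{eq2}. Under $S_H^2=\id$ and $c^2=\id$, Lemma \ref{lemma-involution-1} yields \eqref{eq1}, that is $(\eta\ot\eta)c_{1324}=(\eta\ot\eta)c_{2413}$. Since $c\ot\id\ot\id$ and $\id\ot\id\ot c$ act on disjoint tensor factors they commute, which gives the braid identities $c_{1423}=c_{1324}(\id\ot\id\ot c)$ and $c_{2413}=c_{2314}(\id\ot\id\ot c)$. Composing \eqref{eq1} on the right with $\id\ot\id\ot c$ then produces
\[
(\eta\ot\eta)c_{1423}=(\eta\ot\eta)c_{2413}(\id\ot\id\ot c)=(\eta\ot\eta)c_{2314}(\id\ot\id\ot c)^{2}=(\eta\ot\eta)c_{2314},
\]
the last equality using $c^{2}=\id$. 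This is exactly \eqref{eq2}, so \eqref{eq2} holds automatically.

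The one calculation that requires care is the evaluation of $(\eta\ot\eta)c_{2314}$: the coactions must be pushed through the two braidings with the compatibility axiom and coassociativity applied in the right order before $H$-invariance can collapse the action of $a_{(-1)}$ and return $a_{(0)}$ to $a$ — the same bookkeeping carried out in Lemma \ref{lemma-involution-1}. Once that identity is in hand the remainder is formal, and the genuinely new idea I expect to be the crux is the derivation of \eqref{eq2} from \eqref{eq1} by right-composition with $\id\ot\id\ot c$, which replaces a long Yetter--Drinfeld computation of $(\eta\ot\eta)c_{1423}$ by a two-line braid manipulation.
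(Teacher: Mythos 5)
Your proposal is correct and follows essentially the same route as the paper: the first display is \eqref{eq2} rewritten, the second is identified with \eqref{eq2} via the $H$-invariance computation $(\eta\ot\eta)c_{2314}=\eta_1\ot\eta\ot\eta_2$ (the paper's equation \eqref{eq5}) together with $\id\ot c_{312}=c_{1423}$, and the ``Moreover'' is deduced from \eqref{eq1} of Lemma \ref{lemma-involution-1} by right-composing with the involution $\id\ot\id\ot c$ and using $c_{1423}=c_{1324}(\id\ot\id\ot c)$, $c_{2413}=c_{2314}(\id\ot\id\ot c)$. The paper phrases this last step as showing both sides of \eqref{eq2} agree after composition with $\id\ot\id\ot c$, which is the same computation read in the other direction.
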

\begin{proof}
The first equation is just a translation of (\ref{eq2}).  
The second equation is obtained directly from the first. 
The left hand side is obtained by invoking (\ref{eq5}); the right hand side by using 
$c_{312}=c\ot\id (\id\ot c)$. Now if $S_{H}$ and $c$ are involutions, 
then by Lemma \ref{lemma-involution-1} we have that
$$
[(\eta\ot\eta)(\id\ot c\ot\id)(c\ot \id\ot\id)](\id\ot\id\ot c) = (\eta\ot\eta)(\id\ot c\ot\id).
$$
On the other hand 
$$
[(\eta\ot\eta)(\id\ot c\ot\id)(\id\ot\id\ot c)](\id\ot\id\ot c)
=(\eta\ot\eta)(\id\ot c\ot \id)(\id\ot\id\ot c^2) = 
(\eta\ot\eta)(\id\ot c\ot \id).
$$
\end{proof}
The following corollary follows directly from the lemmas proved above.
\begin{cor}\label{cor:linear-2-cocycle}
Assume $H$ is a semisimple Hopf algebra and $c^{2}=\id_{V}$.
Let $\eta:V\ot V\to \Bbbk$ be an $H$-invariant linear functional. 
Then $\sigma=e^{\tilde{\eta}} \in \Hom_{\Bbbk}(A\ot A,\Bbbk) $ is a multiplicative
$2$-cocycle.\qed 
\end{cor}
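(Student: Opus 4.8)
The plan is to read the statement as the terminal node of the chain of results established in this section, so that the proof is a matter of checking that each hypothesis of the intermediate lemmas is met and then quoting them in order. The only ingredient not already packaged as a lemma is the implication ``$H$ semisimple $\Rightarrow S_H^2 = \id_H$'', and I would isolate this at the very start.

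First I would record that, since $H$ is semisimple (hence finite-dimensional) over a field of characteristic zero, the Larson--Radford theorem yields $S_H^2 = \id_H$. This is precisely the hypothesis shared by Lemmas \ref{lemma-involution-1} and \ref{lemma-involution-2}, and it is what turns the standing assumption that the braiding is symmetric, $c^2 = \id_{V\ot V}$, into something usable. I would then apply Lemma \ref{lemma-involution-1} to obtain \eqref{eq1} on $V^{\ot 4}$ and Lemma \ref{lemma-involution-2} to obtain \eqref{eq2} on $V^{\ot 4}$; together these two identities are exactly condition $(a)$ of Lemma \ref{lem:cond-eps-biderivations}.

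Next I would invoke the equivalence $(a)\Leftrightarrow(b)\Leftrightarrow(c)$ proved in Lemma \ref{lem:cond-eps-biderivations}: having $(a)$, conditions $(b)$ and $(c)$ hold on $A^{\ot 3}$, and these say exactly that $\teta(\id\ot m)$ commutes with $\eps\ot\teta$ and that $\teta(m\ot\id)$ commutes with $\teta\ot\eps$ in the convolution algebra $\Hom_{\Bbbk}(A^{\ot 3},\Bbbk)$. On the other hand, by Lemma \ref{lem:funtional-2cocycle} the map $\teta$ attached to the $H$-invariant functional $\eta$ is a Hochschild $2$-cocycle on $A$ that vanishes on $A_0\ot A + A\ot A_0$; this vanishing is what makes $e^{\teta}$ a well-defined, convolution-invertible element of $\Hom_{\Bbbk}(A\ot A,\Bbbk)$. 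With the cocycle property, the well-definedness of the exponential, and the two commutation relations all in hand, Lemma \ref{le:cond-comm-cocycle} applies verbatim and concludes that $\sigma = e^{\teta}$ is a multiplicative $2$-cocycle.

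The argument is therefore pure bookkeeping once the right lemmas are lined up, and the single genuine step — the one I would flag as the main obstacle — is the translation of semisimplicity into involutivity of the antipode, without which neither involution lemma can be invoked. A minor point worth stating explicitly is that the hypothesis written $c^2 = \id_V$ in the statement should be read as $c^2 = \id_{V\ot V}$, i.e.\ that the braiding of $V$ is symmetric, matching the form required in Lemmas \ref{lemma-involution-1} and \ref{lemma-involution-2}.
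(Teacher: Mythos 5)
Your proposal is correct and follows exactly the route the paper intends: the corollary is stated with a \qed because it ``follows directly from the lemmas proved above,'' namely Larson--Radford to get $S_H^2=\id_H$ from semisimplicity, then Lemmas \ref{lemma-involution-1} and \ref{lemma-involution-2} to obtain \eqref{eq1} and \eqref{eq2}, Lemma \ref{lem:cond-eps-biderivations} to pass to the commutation conditions, and Lemmas \ref{lem:funtional-2cocycle} and \ref{le:cond-comm-cocycle} to conclude. Your reading of $c^2=\id_V$ as $c^2=\id_{V\ot V}$ is also the intended one.
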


\subsection{A closed formula for deformations of braided commutator relations}
We end this section with the following lemma that 
will be very useful in finding liftings. 
In particular, it will tell us how to deform relations like $[x,y]_{c}=0$, which are 
given by braided commutators.

\begin{lema}\label{lem:eta=sigma}
Let $\eta:V\ot V\to \Bbbk$ be an $H$-invariant linear functional
such that 
$\sigma = e^{\tilde{\eta}}$ is a multiplicative $2$-cocycle. 
Let $x_{1}, x_{2} \in V$ be homogeneous elements with
$\delta(x_{1}) = h_{1}\ot x_{1}$ and $\delta(x_{1}) = h_{2}\ot x_{2}$,
$h_{1}, h_{2}\in G(H)$, and
denote $z_{1}=x_{1}\# 1 $, $z_{2}= x_{2}\# 1 \in A$.
Then
$\sigma(z_{1}, z_{2}) =
\eta(x_{1}, x_{2})$. In particular, in $A_{\sigma}$ it holds that
\begin{align*}
z_{1}\cdot_{\sigma} z_{2} & = 
\eta(x_{1},x_{2})(1-h_{1}h_{2}) + z_{1}z_{2}. 
\end{align*}
\end{lema}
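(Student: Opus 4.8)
The plan is to compute $\sigma(z_1,z_2)$ directly from the definition $\sigma = e^{\tilde\eta}$ and then feed the result into the deformed multiplication formula $m_\sigma(a,b) = \sigma(a_{(1)},b_{(1)})\, a_{(2)}b_{(2)}\, \sigma^{-1}(a_{(3)},b_{(3)})$. First I would expand $\sigma = \sum_{i\geq 0}\tilde\eta^{*i}/i!$ and evaluate at $(z_1,z_2) = (x_1\#1,\, x_2\#1)$. Since $z_1,z_2\in A_1 = \toba^1(V)\# H$, the only summand of the exponential that can contribute is $\tilde\eta^{*1}=\tilde\eta$ itself: the constant term $\eps\ot\eps$ vanishes because $\eps(x_1)=0$, and each higher convolution power $\tilde\eta^{*i}$ with $i\geq 2$ forces at least one tensor factor to land in $A_m\ot A_n$ with $(m,n)\neq(1,1)$ after applying $\com$, hence vanishes by Lemma \ref{lem:funtional-2cocycle}. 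So $\sigma(z_1,z_2)=\tilde\eta(z_1,z_2)$, and by the definition of $\tilde\eta$ with $h=h'=1$ this equals $\eta(x_1, 1\cdot x_2)=\eta(x_1,x_2)$. This is the first assertion.

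For the second assertion I would substitute into the formula for $\cdot_\sigma$. The key input is the coproduct in the bosonization: for $v\in V$ with $\delta(v)=h\ot v$ one has $\com_A(z)=z\ot 1 + h\ot z$ where $z=v\#1$ (using the bosonization comultiplication together with the fact that $v$ is primitive in $\toba(V)$, so $\com_{\toba(V)}(v)=v\ot 1 + 1\ot v$). Thus $\com(z_1)=z_1\ot 1 + h_1\ot z_1$ and similarly for $z_2$. Expanding $\sigma(z_{1(1)},z_{2(1)})\, z_{1(2)}z_{2(2)}\,\sigma^{-1}(z_{1(3)},z_{2(3)})$, I would sort the terms by how the three tensor legs of each $\com^{(2)}(z_i)$ distribute between the group-like part $h_i$ and the genuine $z_i$ part. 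Because $\sigma$ and $\sigma^{-1}$ are normalized and vanish on $A_0\ot A + A\ot A_0$ (they agree with $\pm\tilde\eta$ there, which kills any group-like argument), only the configurations in which each of $\sigma$ and $\sigma^{-1}$ receives a pair of genuine degree-one arguments survive. Tracking these: the term where $\sigma$ eats $(z_1,z_2)$ in the first leg and the other two legs are group-likes produces $\eta(x_1,x_2)\cdot 1$; the term where $\sigma^{-1}$ eats $(z_1,z_2)$ produces $\sigma^{-1}(z_1,z_2)\,h_1h_2 = -\eta(x_1,x_2)\,h_1h_2$ (using $\sigma^{-1}=e^{-\tilde\eta}$, so $\sigma^{-1}(z_1,z_2)=-\eta(x_1,x_2)$ by the same degree argument as above); and the remaining configuration, where both $\sigma$ and $\sigma^{-1}$ see only group-likes, contributes the undeformed product $z_1z_2$. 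Summing gives exactly $\eta(x_1,x_2)(1-h_1h_2)+z_1z_2$.

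The step I expect to be the main obstacle is the bookkeeping in this second part: $\com^{(2)}$ of each $z_i$ has three legs, so a priori there are many terms in the sum defining $\cdot_\sigma$, and one must argue cleanly that all but three vanish. The honest way to do this is to note that $\sigma$ restricted to arguments of total degree $\leq 1$ in either slot is completely pinned down — it equals $\eps\ot\eps$ when a group-like meets a group-like and equals $\pm\tilde\eta$ (nonzero only on $V\ot V$) otherwise — so each factor $\sigma(z_{1(i)},z_{2(i)})$ is nonzero only when \emph{both} its arguments are the genuine degree-one parts $z_1,z_2$, and this can happen in at most one of the three positions. A careful enumeration of which leg carries $z_1$ and which carries $z_2$ then leaves precisely the three surviving terms identified above; everything else is forced to zero by normalization. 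Once the vanishing is justified, the remaining arithmetic is the short computation just described.
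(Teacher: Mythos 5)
Your proposal is correct and follows essentially the same route as the paper: show that all convolution powers $\tilde{\eta}^{*i}$ with $i\geq 2$ (and the constant term) vanish on $(z_1,z_2)$ so that $\sigma(z_1,z_2)=\tilde{\eta}(z_1,z_2)=\eta(x_1,x_2)$ and $\sigma^{-1}(z_1,z_2)=-\eta(x_1,x_2)$, then enumerate the nine terms of the deformed product coming from $\com^{(2)}(z_i)=z_i\ot 1\ot 1+h_i\ot z_i\ot 1+h_i\ot h_i\ot z_i$ and observe that only three survive. One sentence of yours is overstated ($\sigma$ itself does not vanish on $A_0\ot A_0$ — it equals $1$ there, which is exactly why the $z_1z_2$ term survives), but you correct this yourself in the final case analysis, so the argument as a whole is sound.
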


\pf
First we show that $\tilde{\eta}^{2}(z_{1}, z_{2}) = 0$.
Since $x_{1}$, $x_{2}$ are homogeneous we have
that 
$\com(z_{i}) = z_{i}\ot 1 + h_{i}\ot
z_{i}$, that is, $z_{i}$ is $(1,h_{i})$-primitive for $i=1,2$. Since 
by Lemma \ref{lem:funtional-2cocycle} we have 
$\tilde{\eta}|_{A_{0}\ot
A + A \ot A_{0}} = 0$, it follows that
\begin{align*}
& \tilde{\eta}^{2}(z_{1}, z_{2})  = 
 \tilde{\eta}([z_{1}]_{(1)}, [z_{2}]_{(1)}) 
 \tilde{\eta}([z_{1}]_{(2)}, [z_{2}]_{(2)}) = \\
& \quad = \tilde{\eta}(z_{1}, z_{2}) 
\tilde{\eta}(1, 1) + 
\tilde{\eta}(z_{1},h_{2} ) \tilde{\eta}(1,z_{2} ) +
\tilde{\eta}( h_{1}, z_{2} ) \tilde{\eta}(z_{1}, 1)
+
\tilde{\eta}( h_{1}, h_{2}) 
\tilde{\eta}(z_{1}, z_{2}) = 0.
\end{align*}
Thus, $\sigma(z_{1}, z_{2})
= \eps(z_{1})\eps(z_{2}) +
 \tilde{\eta}(z_{1}, z_{2}) = 
{\eta}(x_{1}, x_{2})$; in particular,
$\sigma^{-1}(z_{1}, z_{2}) $ $= e^{-\tilde{\eta}}(z_{1}, z_{2}) =  -
{\eta}(x_{1}, x_{2})$.
Finally,
\begin{align*}
 & z_{1}\cdot_{\sigma} z_{2}  =
\sigma([z_{1}]_{(1)}, [z_{2}]_{(1)}) 
[z_{1}]_{(2)} [z_{2}]_{(2)} 
\sigma^{-1}([z_{1}]_{(3)}, [z_{2}]_{(3)})\\
& \qquad = \sigma(z_{1}, z_{2})  \sigma^{-1}(1,1) + 
\sigma(z_{1},h_{2}) z_{2} \sigma^{-1}(1,1) +
\sigma(z_{1},h_{2}) h_{2} 
\sigma^{-1}(1,z_{2}) + \sigma(h_{1}, 
z_{2})z_{1}  \sigma^{-1}(1,1) +   \\
&  \qquad \qquad + 
\sigma(h_{1}, 
h_{2})z_{1} z_{2} \sigma^{-1}(1,1) + \sigma(h_{1}, 
h_{2})z_{1}h_{2} 
 \sigma^{-1}(1,z_{2})
+ 
\sigma(h_{1}, 
z_{2})h_{1} 
 \sigma^{-1}(z_{1},1) + 
\\
&  \qquad \qquad +
\sigma(h_{1}, 
h_{2})h_{1}z_{2} 
 \sigma^{-1}(z_{1},1) +
\sigma(h_{1}, 
h_{2})h_{1}h_{2} 
 \sigma^{-1}(z_{1},z_{2})=\\
& \qquad = \sigma(z_{1}, z_{2})  \sigma^{-1}(1,1) +
\sigma(h_{1}, 
h_{2})z_{1} z_{2} \sigma^{-1}(1,1)
+
\sigma(h_{1}, 
h_{2})h_{1}h_{2} 
 \sigma^{-1}(z_{1},z_{2})=\\
& \qquad = \eta(x_{1}, x_{2}) +
z_{1} z_{2} -
h_{1}h_{2} 
 \eta(x_{1},x_{2}) = \eta(x_{1},x_{2})(1 - h_{1}h_{2} ) + z_{1} z_{2},
\end{align*}
which finish the proof.
\epf

\section{On pointed Hopf algebras over dihedral groups}
\label{sec:pointed-dihedral}
All pointed Hopf algebras with group of group-likes isomorphic
to $ \dm $ with $m=4t \geq 12$ were classified in \cite{FG}.
To give the complete list we need first to
introduce some terminology. 

For the dihedral group $\dm$ we use the following 
presentation by generators and relations
\begin{equation}\label{eq:def-dihedral}
\mathbb D_m:=\langle g,h| \ g^2=1=h^m  \,\, , \,\,
gh=h^{-1}g \rangle.
\end{equation}

Because of our purposes we  
assume that $m=4t \geq 12$, $n=\frac{m}{2}= 2t$ and we fix 
$\omega$ an $ m $-th primitive root of unity.
Recall that the non-trivial conjugacy classes of $ \dm $ and the corresponding 
centralizers are 
\begin{itemize}
\item $\Oc_{h^{n}}= \{h^{n}\}$ and $C_{\dm}(h^{n})= \dm$.
\item $\Oc_{h^{i}}= \{h^{i}, h^{m-i}\}$ and $C_{\dm}(h^{i})= 
\langle
h \rangle \simeq  \Z/(m)$, for $1\leq i <n$. 
\item $\Oc_{g}= \{gh^{j}: j\text{ even}\}$ and $ C_{\dm}(g) = 
\langle g \rangle \times \langle h^{n}\rangle \simeq \Z/(2) \times \Z/(2) $.
\item$\Oc_{gh}= \{gh^{j}:j\text{ odd}\}$ and  
$ C_{\dm}(gh) = 
\langle gh \rangle \times \langle h^{n}\rangle \simeq \Z/(2) \times \Z/(2) $.
\end{itemize}

\subsection{Yetter-Drinfeld modules and Nichols
algebras over $ \dm $}\label{subsec:ydnadm}
The irreducible Yetter-Drinfeld modules that give rise
to finite dimensional Nichols algebras are associated to
the conjugacy classes of $h^{n}$ and $h^{i}$ with $1\leq i < n$, see
\cite[Table 2]{FG}. Next we describe them explicitly as well as
the families of reducible Yetter-Drinfeld modules with finite dimensional Nichols
algebras associated to them. Following a suggestion of the referee we slightly changed
the notation used in \cite{FG}.

\subsubsection{Yetter-Drinfeld modules 
and Nichols algebras associated $\oc_{h^n}$} 
Since
$h^n$ is central, $\oc_{h^n}=\{h^n\}$ and $C_{\dm}(h^n)=\dm$. 
The irreducible representations of $\dm$ are
well-known and they are of degree 1 or 2. Explicitly, there are:
\begin{itemize}
  \item[(i)] $n-1=\frac{m-2}{2}$ irreducible representations of degree 2 given by
$\rho_\ell:\dm\to\GL(2,\k)$ with
\begin{align}\label{eq:repdeg2}
\rho_\ell (g^ah^b)=\begin{pmatrix} 0 & 1 \\
                  1 & 0 \\
                \end{pmatrix}^a
 \begin{pmatrix}
                  \omega^\ell & 0 \\
                  0 & \omega^{-\ell} \\
                \end{pmatrix}^b, \quad \ell\in \N\text{ odd with }1\leq \ell<n.
\end{align}
\item[(ii)] 4 irreducible representations of degree 1.
They are given by the following table
\end{itemize}
\begin{table}[h]
\begin{center}
\begin{tabular}{|c|c|c|c|c|c|}
\hline $\sigma$& 1 & $h^n$ & $h^b$, $1\leq b\leq n-1$ & $g$ & $gh$

\\ \hline  \hline $\chi_1$ &1&1&1&1&1

\\ \hline $\chi_2$ &1&1&1&$-1$&$-1$

\\ \hline $\chi_3$ &1&$(-1)^n$& $(-1)^b$ &1&$-1$

\\ \hline $\chi_4$ &1&$(-1)^n$& $(-1)^b$ &$-1$&1\\

\hline
\end{tabular}
\end{center}
\end{table}

The irreducible Yetter-Drinfeld modules with finite dimensional Nichols algebra
are the ones given by the two-dimensional representations 
$\rho_{\ell}$ with $\ell\in \N$ odd.

Fix $\ell \in \N$ odd with $1\leq \ell <n$ and consider  
the two-dimensional simple representation $(\rho_{\ell}, V)$ of
$\dm$ described in \eqref{eq:repdeg2}
above. Let $\{v_{1}^{\ell},v_{2}^{\ell}\}$ be a $\Bbbk$-basis of $V$. Then
$M_{\ell} = M(\oc_{h^n},\rho_\ell) =  \k\dm\otimes_{\k\dm} V$ is the
Yetter-Drinfeld module spanned linearly by the elements 
$ x^{(\ell)}_{1} = 1\ot v_{1}^{\ell}, x^{(\ell)}_{2}=1\ot v_{2}^{\ell} $;
its structure is given by
\begin{align*}
g\cdot x^{(\ell)}_{1}  &= x^{(\ell)}_{2},  \quad & h\cdot x^{(\ell)}_{1}  
& = \omega^{\ell} x^{(\ell)}_{1},
\quad & \delta(x^{(\ell)}_{1}) &  =   h^{n}\ot x^{(\ell)}_{1},\\
\nonumber g\cdot x^{(\ell)}_{2}  & =  x^{(\ell)}_{1}, 
\quad & h\cdot x^{(\ell)}_{2}  & =  \omega^{-\ell} x^{(\ell)}_{2},
\quad & \delta(x^{(\ell)}_{2}) & = h^{n}\ot x^{(\ell)}_{2}.
\end{align*}
In particular, $\dim M_\ell =2$. By
 \cite[Thm. 3.1]{AF-alt-die}, one has that 
$\toba(\oc_{y^n}, \rho_{\ell}) \simeq \bigwedge
M_{\ell}$ and consequently
 $\dim \toba(\oc_{y^n},
\rho_{\ell})= 4$.

Consider now the set 
$\Lc$  of all sequences of finite length $(\ell_{1},\ldots,\ell_{r})$ with 
$ \ell_{i} \in \N$  odd  and 
$1\leq \ell_{1},\ldots, \ell_{r}<n$. 
Then for $L = (\ell_{1},\ldots,\ell_{r}) \in \Lc$ we define
$M_{L} = \bigoplus_{1\leq i \leq r} M_{\ell_{i}}$. 
Clearly, $M_{L} \in \yddm$ is \emph{reducible} and 
by \cite[Prop. 2.8]{FG}, we have that 
$\toba(M_{L})\simeq \bigwedge M_{L} $ and $\dim
\toba(M_{L})=4^{|L|}$, where $|L|=r$ denotes the length of $L$. 

\begin{obs}\label{rmk:cond-action-L}
Since $a\cdot x_{i}^{(\ell)} = \chi_{i}^{(\ell)}(a) x_{\sigma^{(\ell)}(a)(i)}$ for all
$a\in \dm$,
for these Nichols algebras
assumption 
\eqref{eq:assumption-yd} is satisfied with 
$\sigma^{(\ell)}(g) = (1 2)$, $\sigma^{(\ell)} (h) = \id$ and 
$\chi^{(\ell)}_{1}(g) = 1 =\chi^{(\ell)}_{2}(g)$, 
$\chi^{(\ell)}_{1}(h) = \omega^{\ell}$, 
$\chi^{(\ell)}_{2}(h) = \omega^{-\ell}$.
 \end{obs}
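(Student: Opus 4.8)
The plan is to verify directly that the $\dm$-action on $M_\ell$ (and, in the reducible case, on $M_L$) has the monomial form demanded by \eqref{eq:assumption-yd}, and then to read off the asserted values of $\sigma^{(\ell)}$ and $\chi_i^{(\ell)}$ from the action of the two generators $g$ and $h$. This is essentially an unpacking of the explicit module structure, so I do not expect a genuine obstacle; the only point requiring a word of care is the meaning of ``character''.

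First I would record the action on the basis $\{x_1^{(\ell)},x_2^{(\ell)}\}$ listed in the description of $M_\ell$: the generator $h$ acts diagonally, $h\cdot x_1^{(\ell)}=\omega^{\ell}x_1^{(\ell)}$ and $h\cdot x_2^{(\ell)}=\omega^{-\ell}x_2^{(\ell)}$, while $g$ interchanges the two basis vectors, $g\cdot x_1^{(\ell)}=x_2^{(\ell)}$ and $g\cdot x_2^{(\ell)}=x_1^{(\ell)}$. In particular each generator sends every basis vector to a scalar multiple of a basis vector, i.e.\ acts by a monomial matrix in this basis. Since $g$ and $h$ generate $\dm$ and the monomial matrices form a subgroup of $\GL(2,\k)$, every $a\in\dm$ acts monomially, which is exactly the statement $a\cdot x_i^{(\ell)}=\chi_i^{(\ell)}(a)\,x_{\sigma^{(\ell)}(a)(i)}$ of \eqref{eq:assumption-yd}, with $I=\{1,2\}$ and $\sbb_I=\sbb_2$. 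Comparing with the generators yields $\sigma^{(\ell)}(h)=\id$, $\chi_1^{(\ell)}(h)=\omega^{\ell}$, $\chi_2^{(\ell)}(h)=\omega^{-\ell}$ and $\sigma^{(\ell)}(g)=(1\,2)$, $\chi_1^{(\ell)}(g)=\chi_2^{(\ell)}(g)=1$, as claimed.

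What remains is to check that $g\mapsto(1\,2)$, $h\mapsto\id$ defines a homomorphism $\sigma^{(\ell)}\colon\dm\to\sbb_2$ and that the scalars are consistent; both follow from $M_\ell$ being a genuine $\dm$-module, but I would spell them out against the presentation \eqref{eq:def-dihedral}. For $\sigma^{(\ell)}$ one checks $(1\,2)^2=\id$, $\id^{m}=\id$ and $(1\,2)\,\id\,(1\,2)^{-1}=\id=\id^{-1}$. For the scalars, the stabilizer of each $x_i^{(\ell)}$ is $\langle h\rangle\simeq\Z/(m)$, on which $\sigma^{(\ell)}$ is trivial; there $\chi_i^{(\ell)}$ is multiplicative with $\chi_i^{(\ell)}(h)^{m}=\omega^{\pm\ell m}=1$, so it restricts to an honest character of $\langle h\rangle$, while its value on the nontrivial coset is normalized by $\chi_i^{(\ell)}(g)=1$. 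This is where the interpretation of ``character'' in \eqref{eq:assumption-yd} matters: I would clarify that it means a character of the stabilizer of the relevant basis vector, not of all of $\dm$ — indeed $\chi_i^{(\ell)}$ is \emph{not} a character of $\dm$, since the abelianization forces $h\mapsto\pm1$ whereas $(\omega^{\pm\ell})^2\neq1$ for odd $\ell$ with $1\le\ell<n$.

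Finally, for the reducible modules $M_L=\bigoplus_{1\le i\le r}M_{\ell_i}$ I would observe that each $M_{\ell_i}$ is a $\dm$-submodule, so the action is block-diagonal with respect to the basis $\{x_1^{(\ell_i)},x_2^{(\ell_i)}\}_{i}$. Applying the single-block computation inside each summand shows that \eqref{eq:assumption-yd} again holds, now with $\sigma(g)$ the product of the $r$ transpositions, $\sigma(h)=\id$, and $\chi$ defined block by block. Since the blocks never mix, no further compatibility is needed, and the verification reduces entirely to the rank-two case treated above.
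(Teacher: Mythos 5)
Your verification is correct and coincides with the paper's own (implicit) justification: the remark carries no proof precisely because it is read off from the explicit action of $g$ and $h$ on the basis $\{x_{1}^{(\ell)},x_{2}^{(\ell)}\}$ displayed immediately above it, which is exactly what you do (the monomial-subgroup observation and the block-diagonal reduction for $M_{L}$ being the obvious bookkeeping). Your caveat that $\chi_{i}^{(\ell)}$ is not a group character of all of $\dm$ but only restricts to one on the stabilizer $\langle h\rangle$ --- the general structure being the $1$-cocycle condition $\chi_{\tau}(\sigma\mu)=\chi_{\tau}(\mu)\chi_{\mu\trid\tau}(\sigma)$ that the paper recalls later in Remark \ref{rmk:princ-realiz-sn} --- is accurate and worth noting.
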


\subsubsection{Yetter-Drinfeld modules 
and Nichols algebras associated $\oc_{h^i}$}
Let $1\leq i <n$. In this case, $\oc_{h^i} = \{h^{i}, h^{m-i}\}$ and
$C_{\dm}(h^i)=\langle h \rangle \simeq \Z / (m) $. For $0\leq k <m$ denote by 
$\Bbbk_{\chi_{(k)}}$ the simple representation of $C_{\dm}(h^{i})$
given by the character $\chi_{(k)}(h) = \omega^{k}$.

Take $e$ and $g$ as representatives of left coclasses in $\dm / \langle h \rangle$,
with $h^{i} = eh^{i}e$ and $h^{m-i} = gh^{i}g$.
Then $M_{(i,k)} = M(\oc_{h^i},\chi_{(k)})= 
\k\dm\otimes_{\k \langle h \rangle} \Bbbk_{\chi_{(k)}}$  is the
Yetter-Drinfeld module spanned linearly by the elements 
$y^{(i,k)}_{1} = e\ot 1$ and $y^{(i,k)}_{2} = g\ot 1$.
Its structure is given by
\begin{align}\label{eq:ydik}
g\cdot y^{(i,k)}_{1} & =  y^{(i,k)}_{2},\quad & 
h\cdot y^{(i,k)}_{1}
& =  \omega^{k} y^{(i,k)}_{1},
\quad 
& \delta(y^{(i,k)}_{1}) & =   h^{i}\ot y^{(i,k)}_{1},\\
\nonumber
g\cdot y^{(i,k)}_{2}  & =  y^{(i,k)}_{1},
\quad & h\cdot y^{(i,k)}_{2}  & = \omega^{-k} y^{(i,k)}_{2}
,\quad
& \delta(y^{(i,k)}_{2}) &= h^{-i}\ot y^{(i,k)}_{2}.
\end{align}
In particular, $\dim M_{(i,k)} =2$. 

The irreducible Yetter-Drinfeld modules with finite dimensional Nichols algebra
are the ones given by the pairs $(i,k)$  satisfying that $\omega^{ik}=-1$.
We set $J=\{(i,k):\ 1\leq i< n, 1\leq k < m \text{ such that }\omega^{ik}=-1\}$.
By
 \cite[Thm. 3.1]{AF-alt-die}, one has that
$\toba(\oc_{y^i},  \chi_{(k)}) \simeq \bigwedge
M_{(i,k)}$, for all $(i,k)\in J$ and
 $\dim \toba(\oc_{y^i},
\chi_{(k)})= 4$. 

Consider now the set 
$\II$  of all sequences of finite length  of ordered pairs 
$((i_{1},k_{1}),\ldots,(i_{r},k_{r}))$ such that  $(i_{s},k_{s}) \in J$ 
and $\omega^{i_{s}k_{t}+ i_{t}k_{s}} = 1$ for 
all $1\leq s,t\leq r$.

For $I = ((i_{1},k_{1}),\ldots,(i_{r},k_{r})) \in \II$, we define $M_{I}=
\bigoplus_{1\leq j\leq r}M_{(i_{j},k_{j})}$.
By \cite[Prop. 2.5]{FG}, we have that
$\toba(M_{I})\simeq \bigwedge M_{I}$  and
$\dim\toba(M_{I}) = 4^{|I|}$, where $|I|=r$ denotes the length of $I$.

\begin{obs}\label{rmk:cond-action-I}
Since $a\cdot y_{j}^{(i,k)} = \chi_{j}^{(i,k)}(a)
y^{(i,k)}_{\sigma^{(i,k)}(a)(j)}$ for all
$a\in \dm$, for these Nichols algebras, 
assumption 
\eqref{eq:assumption-yd} is satisfied with 
$\sigma^{(i,k)}(g) = (1 2)$, $\sigma^{(i,k)} (h) = \id$ and 
$\chi^{(i,k)}_{1}(g) = 1 =\chi^{(i,k)}_{2}(g)$, 
$\chi^{(i,k)}_{1}(h) = \omega^{k}$, 
$\chi^{(i,k)}_{2}(h) = \omega^{-k}$.
 \end{obs}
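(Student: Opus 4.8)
The plan is to read off \eqref{eq:assumption-yd} directly from the explicit Yetter--Drinfeld structure recorded in \eqref{eq:ydik}. Since $M_{(i,k)}$ is a genuine $\dm$-module, the content of the Remark is merely that every element of $\dm$ acts on the ordered basis $\{y^{(i,k)}_1,y^{(i,k)}_2\}$ by a monomial matrix, whose permutation part is $\sigma^{(i,k)}$ and whose nonzero scalars are the $\chi^{(i,k)}_j$. First I would record the action of the two generators from \eqref{eq:ydik}: the element $h$ acts diagonally, $h\cdot y^{(i,k)}_1=\omega^{k}y^{(i,k)}_1$ and $h\cdot y^{(i,k)}_2=\omega^{-k}y^{(i,k)}_2$, so $\sigma^{(i,k)}(h)=\id$, $\chi^{(i,k)}_1(h)=\omega^{k}$ and $\chi^{(i,k)}_2(h)=\omega^{-k}$; whereas $g$ interchanges the two basis vectors with unit coefficients, giving $\sigma^{(i,k)}(g)=(1\,2)$ and $\chi^{(i,k)}_1(g)=1=\chi^{(i,k)}_2(g)$. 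These are exactly the asserted values on generators.

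To pass from the generators to an arbitrary element I would invoke the normal form $a=g^{s}h^{b}$, with $s\in\{0,1\}$ and $0\le b<m$, afforded by the presentation \eqref{eq:def-dihedral}. Iterating the action of $h$ yields $h^{b}\cdot y^{(i,k)}_1=\omega^{kb}y^{(i,k)}_1$ and $h^{b}\cdot y^{(i,k)}_2=\omega^{-kb}y^{(i,k)}_2$, and then applying $g$ (when $s=1$) swaps the two vectors without altering the scalar. Hence every $a\in\dm$ acts monomially, with $\sigma^{(i,k)}(g^{s}h^{b})=(1\,2)^{s}$ and $\chi^{(i,k)}_1(g^{s}h^{b})=\omega^{kb}$, $\chi^{(i,k)}_2(g^{s}h^{b})=\omega^{-kb}$. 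The same values also drop out of the general induction formula $a\cdot(g_j\otimes v)=g_{j'}\otimes(\gamma\cdot v)$ recalled in Subsection \ref{subsec:ydCG-nichols-alg}, taking $g_1=e$, $g_2=g$ and $\gamma\in\langle h\rangle$ determined by $ag_j=g_{j'}\gamma$, evaluated on the centralizer character $\chi_{(k)}(h)=\omega^{k}$.

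I expect no genuine obstacle; the one point worth a word is the consistency of this data. The map $\sigma^{(i,k)}$ is well defined and multiplicative because it is the composite of the canonical projection $\dm\to\dm/\langle h\rangle$ with the isomorphism $\dm/\langle h\rangle\simeq\sbb_{\{1,2\}}$, the relations of \eqref{eq:def-dihedral} being respected since $\langle h\rangle$ is normal of index two. The scalars $\chi^{(i,k)}_j$ are \emph{not} honest characters of all of $\dm$, but they are forced to obey the monomial-representation identity $\chi^{(i,k)}_j(ab)=\chi^{(i,k)}_{\sigma^{(i,k)}(b)(j)}(a)\,\chi^{(i,k)}_j(b)$, which holds automatically because $a\mapsto(\text{monomial matrix of }a)$ is a representation; thus no independent verification is needed and \eqref{eq:assumption-yd} follows. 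Finally I would note that the argument for $M_{\ell}$ in Remark \ref{rmk:cond-action-L} is identical, with $\chi_{(k)}$ replaced by the two-dimensional representation $\rho_{\ell}$.
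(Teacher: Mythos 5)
Your proposal is correct and matches the paper's (implicit) justification: the Remark is stated without proof precisely because the data $\sigma^{(i,k)}$ and $\chi^{(i,k)}_{j}$ are read off directly from the generator actions in \eqref{eq:ydik}, exactly as you do. Your extra care about the $\chi^{(i,k)}_{j}$ satisfying the monomial 1-cocycle identity rather than being genuine characters of $\dm$ is a sound reading of the condition in \eqref{eq:assumption-yd} (cf.\ the cocycle condition in Remark \ref{rmk:princ-realiz-sn}), but it adds nothing that the paper needed to spell out.
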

\subsubsection{Yetter-Drinfeld modules 
and Nichols algebras associated to mixed classes}
Finally, we describe a family of reducible Yetter-Drinfeld modules
given by direct sums of the modules described above.

Let $\K$ be the set of all pairs of sequences of finite length 
$(I,L)$ with $I= ((i_{1},k_{1}),\ldots,(i_{r},k_{r})) \in \II$ and 
$L = (\ell_{1},\ldots,\ell_{s})\in \Lc$ such that $k_{j}$ is odd 
for all $1\leq j\leq r$ and $\omega^{i_{j}\ell_{t}}=-1$
for all $1\leq j\leq r$ and $1\leq t\leq s$.

As before, for $(I,L) \in \K$ we define $M_{I,L} =
\left(\bigoplus_{1\leq j\leq s} M_{(i_{j},k_{j})}\right)\oplus
\left(\bigoplus_{1\leq t\leq s} M_{\ell_{t}}\right) $. 
By \cite[Prop. 2.12]{FG}, we have that
$\toba(M_{I,L}) \simeq \bigwedge M_{I,L}$ and $\dim \toba(M_{I,L})
= 4^{\vert I \vert + |L|} $.

We end this subsection recalling the following classification result.
\begin{theorem}\cite[Thm. A]{FG}\label{thm:all-nichols-dm}
Let $\toba(M)$ be a finite dimensional Nichols algebra
in $ \yddm $. Then $\toba(M)\simeq \bigwedge M$, with $M$
isomorphic
either to $M_{I}$, or to
$M_{L}$, or to $M_{I,L}$, with $I\in \II$, $L\in \Lc$ and  
$(I,L) \in \K$, respectively.\qed
\end{theorem}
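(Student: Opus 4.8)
The plan is to combine the complete reducibility of $\yddm$ with a clean dichotomy dictated by the support of each irreducible summand. Since $\k\dm$ is semisimple, any $M\in\yddm$ decomposes as a direct sum of the modules $M(\Oc,\rho)$ described in Subsection~\ref{subsec:ydCG-nichols-alg}, and $\toba(M)$ depends only on the braiding $c(v\ot w)=v_{(-1)}\cdot w\ot v_{(0)}$. The observation I would isolate first is that the degrees of $M(\Oc_{h^n},\rho_\ell)$ and of $M(\Oc_{h^i},\chi_{(k)})$ are powers of $h$, and $h$ acts diagonally on the chosen eigenbases; hence every Yetter--Drinfeld module \emph{without} a reflection summand is of diagonal type, and its Nichols algebra is governed by Heckenberger's classification \cite{H}. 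By contrast, the modules supported on $\Oc_g$ or $\Oc_{gh}$ have reflections as degrees and produce genuinely non-abelian braidings, which must be treated separately.

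I would settle the diagonal (reflection-free) case first. A direct computation on the $h$-eigenbasis produces the braiding matrix: for $M_{(i,k)}$ the diagonal entries are $q_{aa}=\omega^{ik}$ with off-diagonal product $\omega^{-2ik}$, while for $M_\ell$ they are $q_{aa}=\omega^{\ell n}=(-1)^\ell$. Feeding these rank-two diagrams into \cite{H} shows that an irreducible summand has finite-dimensional Nichols algebra precisely when $q_{aa}=-1$, that is when $\omega^{ik}=-1$ (the pairs in $J$) or when $\ell$ is odd; the degree-one representations of $C_{\dm}(h^n)=\dm$ are discarded at once, since $n=2t$ is even (here $4\mid m$ is used) and so their single self-braiding equals $(-1)^n=1$, forcing infinite dimension. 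For a direct sum of surviving summands the generalized Dynkin diagram has every vertex labelled $-1$; comparison with Heckenberger's finite list \cite{H} forces it to be totally disconnected, i.e. $q_{ab}q_{ba}=1$ for all $a\ne b$. Translating this through the explicit entries yields $\omega^{i_sk_t+i_tk_s}=1$ for two rotation summands, no condition for two central summands (so $\Lc$ is unconstrained and $c=-\tau$ throughout), and the conditions $k_j$ odd together with $\omega^{i_j\ell_t}=-1$ for a mixed pair, i.e. exactly the conditions defining $\II$, $\Lc$ and $\K$ (cf. \cite[Props. 2.5, 2.8, 2.12]{FG}). In each disconnected case $c$ is involutive with all self-braidings $-1$, so the only quadratic relations are $x^2=0$ and braided anticommutators, giving $\toba(M)\simeq\bigwedge M$ of dimension $4^{|I|+|L|}$.

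The heart of the proof, and the step I expect to be the main obstacle, is eliminating the reflection classes, which is where $m=4t\ge12$ enters. Under conjugation the reflections in $\Oc_g$ form a dihedral rack with $gh^a\trid gh^b=gh^{2a-b}$ of cardinality $n=2t\ge6$, and the braiding is no longer diagonal, so \cite{H} does not apply. The plan is to show this rack is of \emph{type D}, i.e. that it contains a decomposable subrack $R\sqcup S$ and elements $r\in R$, $s\in S$ with $r\trid(s\trid(r\trid s))\ne s$, a configuration available precisely because $n\ge6$. By the collapsing criterion for racks of type D \cite{AFGV}, the Nichols algebra of \emph{every} Yetter--Drinfeld module supported on $\Oc_g$ (and likewise $\Oc_{gh}$) is then infinite dimensional; the same explicit rack computations are carried out in \cite[Thm. 3.1]{AF-alt-die}. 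The delicate point is producing such a subrack uniformly in $t$: the small dihedral racks that evade type~D, such as the class of transpositions in $\s_3$, are exactly the finite-dimensional exceptions excluded by requiring $m\ge12$.

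Finally I would assemble the two halves. Because $\toba(W)$ embeds in $\toba(M)$ for any braided subspace $W\subseteq M$, a single reflection summand already forces $\toba(M)$ to be infinite dimensional; hence a finite-dimensional $\toba(M)$ can involve only summands supported on $\Oc_{h^n}$ and $\Oc_{h^i}$. Such an $M$ is of diagonal type, so by the second paragraph its Nichols algebra is finite dimensional exactly when $M$ is isomorphic to one of $M_I$, $M_L$ or $M_{I,L}$ with $I\in\II$, $L\in\Lc$, $(I,L)\in\K$, and in each case $\toba(M)\simeq\bigwedge M$. This exhausts the possibilities and proves the theorem.
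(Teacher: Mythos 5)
The first thing to say is that the paper does not prove this statement: it is imported verbatim from \cite[Thm.\ A]{FG} (note the \verb+\qed+ with no argument), so the only meaningful comparison is with the proof carried out there and in \cite[Thm.\ 3.1]{AF-alt-die} for the irreducible case. Your overall strategy coincides with that one: decompose $M$ into irreducibles using semisimplicity of $\yddm$, observe that any summand supported on a power of $h$ yields a braided vector space of diagonal type (the braiding only sees the action of the degrees, which lie in $\langle h\rangle$), treat the diagonal case with Heckenberger's classification, and eliminate the reflection classes $\Oc_g$, $\Oc_{gh}$ by rack-theoretic arguments. You also correctly locate both places where $m=4t\ge 12$ enters (evenness of $n$ kills the degree-one representations at $\Oc_{h^n}$; the reflection racks have $n=2t\ge 6$ elements). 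Replacing the abelian-subrack computations of \cite{AF-alt-die} by the type-D criterion of \cite{AFGV} is a legitimate, if anachronistic, variant, and your verification $4(r-s)\neq 0$ in $\Z/n$ for $n\ge 6$ is correct.

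There is, however, one step whose stated justification is wrong even though its conclusion is right. You assert that for a direct sum of surviving summands ``the generalized Dynkin diagram has every vertex labelled $-1$; comparison with Heckenberger's finite list forces it to be totally disconnected.'' This is false for general diagrams: the connected rank-two diagram with both vertices labelled $-1$ and edge label a root of unity $q\neq 1$ \emph{is} in Heckenberger's list (its Nichols algebra has dimension $4\ord(q)$), so an all-$(-1)$ diagram need not be disconnected, and since every label here is a power of $\omega$, the root-of-unity condition is automatic. What actually saves the argument is structural: the $g$-action ties the two basis vectors of each irreducible summand together, and the four cross-edges between two summands are simultaneously trivial or simultaneously nontrivial (labels $\lambda^{\pm1}$ with $\lambda=\omega^{i_sk_t+i_tk_s}$). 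Hence a connected pair of summands produces a rank-four $4$-cycle with all vertices $-1$ and alternating edge labels $\lambda,\lambda^{-1}$, and it is \emph{this} configuration that must be shown to be absent from Heckenberger's tables; note that passing to three of the four vertices does not suffice, since for $\lambda=-1$ the resulting path is Cartan type $A_3$ at $-1$, which is finite dimensional. The same caveat applies to a single summand $M_{(i,k)}$ with $\omega^{ik}\neq\pm1$, whose diagram $\{q,q^{-2},q\}$ must be checked against the rank-two table rather than dismissed because $q\neq -1$. These verifications are exactly the content of the necessity lemmas of \cite[Sec.\ 2]{FG}; your citation of \cite[Props.\ 2.5, 2.8, 2.12]{FG} covers only the ``if'' direction.
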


\subsection{Classification of finite dimensional 
pointed Hopf algebras over $ \dm $}
\label{subsec:phadm}
In this subsection we present all finite dimensional pointed Hopf algebras
over $ \dm $ up to isomorphism. First we introduce 
two families of quadratic algebras depending on 
families of parameters. It turns out that these quadratic
algebras give all nontrivial liftings of bosonizations of
finite dimensional 
Nichols algebras  in $ \yddm $.  

Let $I\in \II$ and $L\in \Lc$
be as Subsection \ref{subsec:ydnadm}. 
By abuse of notation, if $I =((i_{1},k_{1}),\ldots,(i_{r},k_{r}))$ 
and $L =(\ell_{1},\ldots,\ell_{s})$, we write $(i_{j},k_{j}) \in I$ and 
$\ell_{t} \in L$ for all $1\leq j\leq r$ and $1\leq t\leq s$.

Consider the families 
$\lambda = (\lambda_{p,q,i,k})_{(p,q),(i,k) \in I}$, $\gamma =
(\gamma_{p,q,i,k})_{(p,q),(i,k) \in I}$, $\theta =
(\theta_{p,q,\ell})_{(p,q) \in I, \ell \in L}$ and $\mu =
(\mu_{p,q,\ell})_{(p,q)\in I, \ell \in L}$ of elements
in $\k$ satisfying:
\begin{equation}\label{eq:cond-par-1}
\lambda_{p,m-k,i,k}=  \lambda_{i,k,p,m-k}\quad\text{and}\quad
\gamma_{p,k,i,k}= \gamma_{i,k,p,k}.
\end{equation}

\begin{definition}\label{def:AI}
For $I \in \II$, denote by
$A_{I}(\lambda, \gamma)$ the
algebra generated by $g,h, a^{(p,q)}_{1},a^{(p,q)}_{2}$
with $(p,q) \in I$ satisfying
the relations:
\begin{align*}
g^{2} = 1 = h^{m},\quad ghg= h^{m-1},\quad
ga^{(p,q)}_{1}  = a^{(p,q)}_{2} g,\quad  ha^{(p,q)}_{1} = 
\omega^{q} a^{(p,q)}_{1} h,
\quad ha^{(p,q)}_{2} = \omega^{-q}a^{(p,q)}_{1}h,\\
a^{(p,q)}_{1}a^{(i,k)}_{1} +a^{(i,k)}_{1}a^{(p,q)}_{1}  =
\delta_{q,m-k} \lambda_{p,q,i,k}(1 - h^{p+i}),
\
a^{(p,q)}_{1}a^{(i,k)}_{2} +a^{(i,k)}_{2}a^{(p,q)}_{1}  =
\delta_{q,k} \gamma_{p,q,i,k}(1 - h^{p-i}).
\end{align*}
\end{definition}
It is a Hopf algebra with its structure determined by $g,h$ being group-likes and
$$
\com(a^{(p,q)}_{1})  = a^{(p,q)}_{1}\ot 1 + h^{p}\ot a^{(p,q)}_{1},\qquad
\com(a^{(p,q)}_{2}) = a^{(p,q)}_{2}\ot 1 + h^{-p}\ot a^{(p,q)}_{2},
\text{ for all }(p,q)\in I.
$$
It turns out that the diagram of $A_{I}(\lambda, \gamma) $ is
exactly $\toba(M_{I}) $, thus 
we call the pair $
(\lambda,\gamma) $ a \emph{lifting datum} for $\toba(M_{I})$.
Set $\gamma = 0$ if $|I|=1$.

\begin{exa} If $ I = (i,n) $ with $ i $ odd
we obtain
the Hopf algebra $A_{(i,n)}(\lambda)$ generated
by the elements $g, h, a_{1}, a_{2}$ satisfying
\begin{eqnarray}
\nonumber g^{2} = 1 = h^{m},&\qquad ghg= h^{m-1},\\
\nonumber ga_{1}  = a_{2} g,&\qquad ha_{1} = -a_{1} h,
\qquad ha_{2} = -a_{2}h,\\
\nonumber a_{1}^{2}= \lambda (1 - h^{2i}),
&\quad a_{2}^{2}=
 \lambda(1 - h^{-2i}),\quad
a_{1}a_{2} + a_{2}a_{1} = 0.
\end{eqnarray}
\end{exa}

Now we introduce the second family of quadratic algebras.
\begin{definition}\label{def:liftings2}
For $(I,L) \in \K$, denote by
$B_{I,L}(\lambda,\gamma,\theta,\mu)$ the
algebra generated by  $g,h, a_{1}^{(p,q)}$, $a_{2}^{(p,q)},
b_{1}^{(\ell)},b_{2}^{(\ell)}$
satisfying
the relations:
\begin{align*}
g^{2} = 1 = h^{m},\quad ghg= h^{m-1},\quad
g a_{1}^{(p,q)}  =  a_{2}^{(p,q)} g,\qquad\qquad\qquad\qquad\qquad\\
h a_{1}^{(p,q)} = \omega^{q}  a_{1}^{(p,q)} h,
\quad
gb_{1}^{(\ell)}  = b_{2}^{(\ell)} g,\qquad
hb_{1}^{(\ell)} = \omega^{\ell} b_{1}^{(\ell)} h,
\qquad\qquad\qquad\qquad\qquad\\
 [a_{1}^{(p,q)}]^{2} = 0 =  [a_{2}^{(p,q)}]^{2},\qquad
b_{1}^{(\ell)}b_{2}^{(\ell')}+b_{2}^{(\ell')}b_{1}^{(\ell)}= 0,\qquad
b_{1}^{(\ell)}b_{1}^{(\ell')}+b_{1}^{(\ell')}b_{1}^{(\ell)}= 0,\qquad\qquad\\
 a_{1}^{(p,q)}a_{1}^{(i,k)} +a_{1}^{(i,k)}a_{1}^{(p,q)} =
\delta_{q,m-k} \lambda_{p,q,i,k}(1 - h^{p+i}),\
 a_{1}^{(p,q)} a_{2}^{(i,k)} +a_{2}^{(i,k)} a_{1}^{(p,q)} =
 \delta_{q,k}\gamma_{p,q,i,k}(1 - h^{p-i}),\\
 a_{1}^{(p,q)}b_{1}^{(\ell)} + b_{1}^{(\ell)} a_{1}^{(p,q)}=
 \delta_{q,m-\ell}\theta_{p,q,\ell}(1 - h^{n+p}),\
a_{1}^{(p,q)}b_{2}^{(\ell)} + b_{2}^{(\ell)} a_{1}^{(p,q)} =
 \delta_{q,\ell}\mu_{p,q,\ell}(1 - h^{n+p}).
\end{align*}
\end{definition}
It is a Hopf algebra with its structure determined by $g,h$ being group-likes and 
\begin{align*}
\com(a_{1}^{(p,q)}) & = a_{1}^{(p,q)}\ot 1 + h^{p}\ot a_{1}^{(p,q)},&
\com(a_{2}^{(p,q)}) & =a_{2}^{(p,q)}\ot 1 + h^{-p}\ot a_{2}^{(p,q)},\\
\com(b_{1}^{(\ell)}) & = b_{1}^{(\ell)}\ot 1 + h^{n}\ot b_{1}^{(\ell)},&
\com(b_{2}^{(\ell)})& = b_{2}^{(\ell)}\ot 1 + h^{n}\ot b_{2}^{(\ell)},
\end{align*} 
for all $ (p,q)\in I, \ell \in L $.
It turns out that the diagram of $B_{I,L}(\lambda,\gamma,\theta,\mu)$ is
$\toba(M_{I,L})$, thus 
we call the 4-tuple $
(\lambda,\gamma,\theta,\mu) $ a lifting datum for
$\toba(M_{I,L})$.

\begin{exa} Let $I=\{(i,k)\}$ and $L=\{m-k\}$
 with $1\leq k<m$ an odd number. The
Hopf algebra $B_{I,L}(\theta,\mu)$ is the algebra generated
by $g, h, a_{1}, a_{2},b_{1},b_{2}$ satisfying the
relations
\begin{eqnarray}\nonumber
g^{2} = 1 = h^{m},&\qquad ghg= h^{m-1},\\\nonumber
ga_{1}  = a_{2} g,&\qquad ha_{1} = \omega^{k}a_{1} h,
\qquad
gb_{1}  = b_{2} g,  \qquad hb_{1} =  \omega^{-k}b_{1}h,
\\\nonumber
a_{1}^{2}= 0 = a_{2}^{2},&\qquad b_{1}^{2} = 0 = b_{2}^{2},\qquad
a_{1}a_{2} + a_{2}a_{1} = 0,\\\nonumber
b_{1}b_{2} +b_{2}b_{1}= 0,& 
a_{1}b_{1}+b_{1}a_{1}= \theta (1 - h^{n+i}),\quad
a_{1}b_{2}+b_{2}a_{1}= \mu(1 - h^{n+i}).\nonumber
\end{eqnarray}
\end{exa}

The following theorem gives the classification of 
all finite dimensional 
pointed Hopf algebras over $ \dm $ with $ m=4t\geq 12 $.

\begin{theorem}\cite[Thm. B]{FG}
Let $H$ be a finite dimensional pointed Hopf algebra with $G(H) = \dm$,
$ m=4t\geq 12 $.
Then $H$ is isomorphic to one of the following:
\begin{enumerate}
\item[$ (a) $] $\toba(M_I)\#\Bbbk \dm$ with $I= ((i,k)) \in \II$, $k\neq n$, or
\item[$ (b) $] $\toba(M_{L})\#\Bbbk \dm$ with $L\in \Lc$, or
\item[$ (c) $] $A_{I}(\lambda,\gamma)$ with $I\in \II$, $|I| > 1$
or $I = ((i,n))$ and $\gamma \equiv 0$, or
\item[$ (d) $] $B_{I,L }(\lambda,\gamma,\theta,\mu)$ with 
$(I,L)\in \K$, $|I|> 0$ and $|L|>0$.
\end{enumerate}
Conversely, any pointed Hopf algebra of the list above is a lifting
of a boso\-nization of a finite dimensional braided Hopf algebra in $ \yddm $.\qed
\end{theorem}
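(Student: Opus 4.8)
The plan is to apply the Lifting Method of Andruskiewitsch--Schneider recalled in Subsection \ref{subsec:lifting-method}. Let $H$ be a finite dimensional pointed Hopf algebra with $G(H) = \dm$, $m = 4t \geq 12$. Then $\gr H \simeq R \# \k\dm$, where the diagram $R$ is a braided Hopf algebra in $\yddm$ and its degree-one component $V = R(1)$ is the infinitesimal braiding, a Yetter-Drinfeld module over $\k\dm$ whose Nichols algebra satisfies $\dim \toba(V) < \infty$.

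The first step is to prove \emph{generation in degree one}, namely that $R$ is generated as an algebra by $V$, so that $R \simeq \toba(V)$. Since the conjugacy classes and centralizers of $\dm$ are exactly those listed above, and since by Theorem \ref{thm:all-nichols-dm} every finite dimensional Nichols algebra in $\yddm$ is an exterior algebra $\bigwedge M$ with $M \simeq M_I$, $M_L$ or $M_{I,L}$, this reduces to showing that $R$ carries no primitive elements in degrees $\geq 2$. I would do this by bounding the graded components of $R$ against those of $\toba(V)$, exploiting the quadratic (Koszul) nature of the exterior algebras involved and the explicit action/coaction formulas \eqref{eq:ydik}.

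Granting generation in degree one, $V$ is one of $M_I$, $M_L$, $M_{I,L}$ and $\toba(V) \simeq \bigwedge V$, so the problem becomes the determination of all liftings $H$ with $\gr H \simeq \toba(V)\#\k\dm$ for each of these three types. For each homogeneous generator $x_i$ of $V$ I would pick a skew-primitive lift in $H$ carrying the same grouplike datum (the $h^{p}$, $h^{-p}$ or $h^{n}$ appearing in the coproducts), and then analyse how the defining quadratic relations of $\bigwedge V$ may deform. These are of two shapes, the squares $x^2 = 0$ and the braided anticommutators $xy + yx = 0$; in a lifting each may acquire a right-hand side lying in $\k\dm$. Imposing that such a right-hand side be skew-primitive with the correct pair of grouplikes and compatible with the $\dm$-action \eqref{eq:assumption-yd} forces it into the displayed forms $\delta_{q,m-k}\lambda_{p,q,i,k}(1 - h^{p+i})$, etc., and yields precisely the symmetry constraints \eqref{eq:cond-par-1} on the parameter families $\lambda,\gamma,\theta,\mu$.

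Finally I would verify that the algebras $A_{I}(\lambda,\gamma)$ and $B_{I,L}(\lambda,\gamma,\theta,\mu)$ so obtained are genuine Hopf algebras of the expected dimension $2m\cdot 4^{|I|+|L|}$, for instance by exhibiting a PBW basis via the Diamond Lemma and checking that the associated graded of each recovers the bosonization $\toba(V)\#\k\dm$; this guarantees that no choice of deformation parameter causes the algebra to collapse. The hard part will be the combination of the generation-in-degree-one step with the exhaustiveness and non-collapse claims: one must rule out any further deformation parameter surviving the coassociativity constraints, and show that the deformed relations remain consistent with the non-abelian grouplike relation $ghg = h^{m-1}$, so that the dimension is preserved and distinct lifting data indeed produce the full list $(a)$--$(d)$.
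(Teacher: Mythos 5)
This statement is not proved in the paper at all: it is quoted verbatim from \cite{FG} (Theorem B there) and stamped with \qed, so there is no internal argument to compare yours against. Your outline does reproduce the strategy of the cited source --- the Andruskiewitsch--Schneider lifting method, with Theorem \ref{thm:all-nichols-dm} supplying the list of possible infinitesimal braidings --- so the architecture is the right one. But as written it is a plan rather than a proof: each of the three genuinely hard ingredients is announced and then deferred. (i) Generation in degree one over a non-abelian group is a substantial theorem; ``bounding the graded components of $R$ against those of $\toba(V)$, exploiting the Koszul nature'' is not an argument, and nothing in the quadratic/exterior structure by itself prevents new primitives in degree $\geq 2$ --- this has to be proved, and in \cite{FG} it occupies its own section. (ii) For exhaustiveness of the liftings, the deformed right-hand side of a relation a priori lies in $H_1$, not in $\k\dm$; one must explicitly rule out contributions from the other skew-primitive generators using the $\dm$-action and the coproduct, and it is exactly this analysis that produces the dichotomy between case $(a)$ ($k\neq n$: the square $[a_1^{(i,k)}]^2$ cannot deform, since $\delta_{q,m-k}$ forces $2k\equiv m$) and case $(c)$ ($k=n$: $\lambda\neq 0$ allowed). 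Your sketch does not engage with this case split, which is where the precise shape of the list $(a)$--$(d)$ comes from. (iii) The non-collapse of dimension is asserted via ``the Diamond Lemma'' without exhibiting the ambiguity resolutions, which in the presence of the relation $ghg=h^{m-1}$ is the delicate point.

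One remark in your favour: the dimension lower bound in (iii) can be obtained more cleanly than by the Diamond Lemma, namely from the cocycle constructions of Section \ref{sec:pointed-dihedral} of this paper. The relations of $A_I(\lambda,\gamma)$ hold in the cocycle deformation $(\toba(M_I)\#\k\dm)_\sigma$, giving a surjection from the presented algebra onto an algebra of dimension $2m\cdot 4^{|I|}$; combined with the (easy) spanning upper bound this pins down $\dim A_I(\lambda,\gamma)$ without any rewriting-system bookkeeping. Note, however, that the paper's Theorems \ref{thm:AI-cocycle} and \ref{thm:BIL-cocycle} as written invoke the dimension count from \cite{FG}, so if you take this route you must organize the logic to avoid circularity: establish the cocycle deformation and its presentation-independent dimension first, then deduce the dimension of the abstractly presented algebra.
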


\subsection{Cocycle deformations and 
finite dimensional pointed Hopf algebras over $ \dm $}
In this subsection we prove that all pointed Hopf algebras
$A_{I}(\lambda,\gamma)$ and $B_{I,L}(\lambda, \gamma,
\theta, \mu) $ can be obtained by deforming
the multiplication of a bosonization of a Nichols algebra using
a multiplicative $2$-cocycle. 

\subsubsection{Cocycle deformations 
and the algebras $A_{I}(\lambda,\gamma)$}
Let $ I\in \II $ and consider the Nichols algebra $ \toba(M_{I}) $.
For all $ (p,q) \in I $ on $ M_{I}$, consider the linear maps 
$ d_{1}^{(p,q)}, d_{2}^{(p,q)}$ given
by the rule $ d_{r}^{(p,q)}(y_{s}^{(i,k)}) = \delta_{r,s}\delta_{p,i}\delta_{q,k}$ 
for all $r,s=1,2$, $(p,q), (i,k) \in I $. By Subsection \ref{subsec:hoch-H-inv} the
following map defines a Hochschild $2$-cocycle on $\toba(M_{I})$
$$ \eta = \sum_{\substack{(p,q),(i,k) \in I,\\ 1\leq r,s\leq 2}} \alpha_{p,q, i,k}^{r,s}
d_{r}^{(p,q)} \ot d_{s}^{(i,k)}.$$ 

\begin{lema}\label{lem:inv-bider-I}
 $ \eta $ is $ \dm $-invariant if and only if the following conditions hold:
\begin{eqnarray}
\alpha_{p,q, i,k}^{r,s}  =  & \alpha_{p,q, i,k}^{s,r} & \forall 
(p,q),(i,k) \in I, r,s=1, 2,\label{eq:inv-bider-I1}\\
\alpha_{p,q, i,k}^{1,1}  =  & \alpha_{p,q, i,k}^{2,2} & \forall 
(p,q),(i,k) \in I, \label{eq:inv-bider-I2}\\
\alpha_{p,q, i,k}^{r,r}  = & \delta_{q,m-k}\alpha_{p,m-k, i,k}^{r,r} & \forall 
(p,q),(i,k) \in I, r=1, 2,\label{eq:inv-bider-I3}\\
\alpha_{p,q, i,k}^{r,s}  = & \delta_{q,k}\alpha_{p,k, i,k}^{r,s} & \forall 
(p,q),(i,k) \in I, 1\leq r\neq s\leq 2. \label{eq:inv-bider-I4}
\end{eqnarray}
 \end{lema}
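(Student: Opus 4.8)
The plan is to reduce $\dm$-invariance of $\eta$ to invariance under the two generators $g$ and $h$, and then to read off the four families of linear conditions by evaluating $\eta^g$ and $\eta^h$ on the basis $\{y^{(p,q)}_r\}$ of $M_I$. Since every element $a$ of $\dm$ is group-like in $\k\dm$, we have $\com(a)=a\ot a$, so $\eta^a(x,y)=\eta(a\cdot x,a\cdot y)$ and hence $\eta^{ab}=(\eta^{a})^{b}$ for all $a,b\in\dm$. Consequently, if $\eta^g=\eta$ and $\eta^h=\eta$, then $\eta$ is fixed by every word in $g,h$, i.e. by all of $\dm$; conversely invariance of $\eta$ trivially specializes to the generators. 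Thus $\eta$ is $\dm$-invariant if and only if $\eta^g=\eta$ and $\eta^h=\eta$. Throughout I will use that $\eta(y^{(p,q)}_r,y^{(i,k)}_s)=\alpha^{r,s}_{p,q,i,k}$, which is immediate from $d^{(p,q)}_r(y^{(i,k)}_s)=\delta_{r,s}\delta_{p,i}\delta_{q,k}$.

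First I would treat $g$. By Remark \ref{rmk:cond-action-I} we have $\sigma^{(p,q)}(g)=(1\,2)$ and $\chi^{(p,q)}_r(g)=1$, so $g\cdot y^{(p,q)}_r=y^{(p,q)}_{\bar r}$, where $\bar 1=2$ and $\bar 2=1$. Hence
$$\eta^g(y^{(p,q)}_r,y^{(i,k)}_s)=\eta(y^{(p,q)}_{\bar r},y^{(i,k)}_{\bar s})=\alpha^{\bar r,\bar s}_{p,q,i,k},$$
and $\eta^g=\eta$ is equivalent to $\alpha^{r,s}_{p,q,i,k}=\alpha^{\bar r,\bar s}_{p,q,i,k}$ for all indices. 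Writing out the two essentially distinct cases $(r,s)=(1,2)$ and $(r,s)=(1,1)$ gives exactly $\alpha^{1,2}_{p,q,i,k}=\alpha^{2,1}_{p,q,i,k}$ and $\alpha^{1,1}_{p,q,i,k}=\alpha^{2,2}_{p,q,i,k}$, i.e. conditions \eqref{eq:inv-bider-I1} and \eqref{eq:inv-bider-I2}.

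Next I would treat $h$. Here $\sigma^{(p,q)}(h)=\id$, $\chi^{(p,q)}_1(h)=\omega^{q}$ and $\chi^{(p,q)}_2(h)=\omega^{-q}$, so $h$ acts diagonally and
$$\eta^h(y^{(p,q)}_r,y^{(i,k)}_s)=\chi^{(p,q)}_r(h)\,\chi^{(i,k)}_s(h)\,\alpha^{r,s}_{p,q,i,k}.$$
Therefore $\eta^h=\eta$ forces $\alpha^{r,s}_{p,q,i,k}=0$ whenever $\chi^{(p,q)}_r(h)\chi^{(i,k)}_s(h)\neq 1$. For $r=s$ this scalar is $\omega^{\pm(q+k)}$, which equals $1$ exactly when $q=m-k$; for $r\neq s$ it is $\omega^{\pm(q-k)}$, equal to $1$ exactly when $q=k$. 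Rewritten with Kronecker deltas these are precisely \eqref{eq:inv-bider-I3} and \eqref{eq:inv-bider-I4}.

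Combining the two generators, $\eta$ is $\dm$-invariant if and only if all four families \eqref{eq:inv-bider-I1}--\eqref{eq:inv-bider-I4} hold, which is the assertion. The only substantive point, and it is elementary, is the uniqueness claim for the roots of unity: since $\omega$ is a primitive $m$-th root and $q,k\in\{1,\dots,m-1\}$, one has $q+k\in\{2,\dots,2m-2\}$, so $\omega^{q+k}=1$ forces $q+k=m$; and $q-k\in\{-(m-2),\dots,m-2\}$, so $\omega^{q-k}=1$ forces $q=k$. I expect the main care to be purely organizational, namely keeping track of the four index pairs $(r,s)$ and making sure no extra constraints arise from non-generator elements, which is already guaranteed by the reduction to $g$ and $h$ in the first step.
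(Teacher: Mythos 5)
Your proof is correct and follows essentially the same route as the paper: reduce $\dm$-invariance to invariance under the generators $g$ and $h$ (the paper does this without spelling out the $\eta^{ab}=(\eta^a)^b$ justification you supply), read off conditions \eqref{eq:inv-bider-I1}--\eqref{eq:inv-bider-I2} from the swap action of $g$, and conditions \eqref{eq:inv-bider-I3}--\eqref{eq:inv-bider-I4} from the diagonal action of $h$ together with the primitivity of $\omega$. The only cosmetic difference is that you evaluate on the basis $y^{(p,q)}_r$ whereas the paper computes the induced action on the dual functionals $d^{(p,q)}_r$; these are equivalent.
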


\pf
To prove that $ \eta $ is $ \dm $-invariant it is enough to 
show that $ \eta^{g} = \eta^{h}=\eta  $. Since 
$ [d_{1}^{(p,q)}]^{g} = d_{2}^{(p,q)} $ and 
$ [d_{2}^{(p,q)}]^{g} = d_{1}^{(p,q)} $ for all 
$ (p,q) \in I $, and $ \eta $ is a linear combination 
of tensor products of $\ep$-derivations, we have that 
$\eta^{g} = \eta$ if and only if \eqref{eq:inv-bider-I1} and
\eqref{eq:inv-bider-I2} hold. Analogously, since 
$  [d_{i}^{(p,q)}]^{h} = \omega^{(-1)^{i-1}q} d_{i}^{(p,q)} $ for all 
$ (p,q) \in I $ and $ i=1,2 $ we have that $ \eta^{h} = \eta $ if and only if 
$$\eta = \sum_{\substack{(p,q),(i,k) \in I,\\ 1\leq r,s\leq 2}} \alpha_{p,q, i,k}^{r,s}
\omega^{(-1)^{r-1}q + (-1)^{s-1}k} d_{r}^{(p,q)} \ot d_{s}^{(i,k)},$$
which holds if and only if 
$ \alpha_{p,q, i,k}^{r,s} = \alpha_{p,q, i,k}^{r,s}
\omega^{(-1)^{r-1}q + (-1)^{s-1}k} $
for all $ (p,q)$, $(i,k)\in I $ and $ r,s=1,2 $.
Thus, if $ r=s $ we must have that
$\alpha_{p,q, i,k}^{r,r} = 0 $ or
 $ q  \equiv -k \mod m $ which gives \eqref{eq:inv-bider-I3} and
 if $ r\neq s $ then 
 $\alpha_{p,q, i,k}^{r,s} = 0 $ or
 $ q  \equiv k \mod m $ which gives \eqref{eq:inv-bider-I4}.  
\epf

\begin{lema}\label{lem:AI-cocycle}
Assume $ \eta $ satisfies conditions 
\eqref{eq:inv-bider-I1} -- \eqref{eq:inv-bider-I4}.
Then 
$ \sigma = e^{\tilde{\eta}}  $ is a 
multiplicative $ 2 $-cocycle for $ \toba(M_{I})\# \Bbbk \dm $.
\end{lema}

\pf By assumption, we know that $\eta$
is $\dm$-invariant. Since by Theorem \ref{thm:all-nichols-dm}, 
the braiding in $\yddm$ is symmetric,
then by Lemmas \ref{lem:cond-eps-biderivations},
\ref{lemma-involution-1} and \ref{lemma-involution-2}, 
we get that $\tilde{\eta}$ fulfills the conditions
in Lemma \ref{le:cond-comm-cocycle}, and consequently 
$ \sigma = e^{\tilde{\eta}}  $ is a 
multiplicative $ 2 $-cocycle for $ \toba(M_{I})\# \Bbbk \dm $.
\epf

\begin{theorem}\label{thm:AI-cocycle}
Let $ H = \toba(M_{I})\# \Bbbk \dm $ and $\sigma = e^{\tilde{\eta}}$
be the multiplicative $2-cocycle$ given by \emph{Lemma \ref{lem:AI-cocycle}}. Then 
$ H_{\sigma} \simeq A_{I}(\lambda, \gamma)$ with 
$\lambda_{p,q,i,k}=\alpha^{r,r}_{p,q,i,k}+\alpha^{r,r}_{i,k,p,q}$
and $\gamma_{p,q,i,k}=\alpha^{r,s}_{p,q,i,k}+\alpha^{r,s}_{i,k,p,q}$
for all $(p,q),(i,k)\in I$. In particular, $A_{I}(\lambda, \gamma)$ is 
a cocycle deformation of $H$ for all lifting datum. 
\end{theorem}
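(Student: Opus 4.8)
The plan is to construct the isomorphism directly on generators. Since a cocycle deformation leaves the coalgebra unchanged, $H_\sigma=H$ as coalgebras; in particular $g,h$ remain group-like and each $y_r^{(p,q)}$ remains $(1,h^{\pm p})$-primitive in $H_\sigma$, with the same coproduct. I would therefore define $\phi\colon A_I(\lambda,\gamma)\to H_\sigma$ by $g\mapsto g$, $h\mapsto h$, $a_r^{(p,q)}\mapsto y_r^{(p,q)}$, verify that the images satisfy the defining relations of Definition \ref{def:AI}, and then show $\phi$ is bijective. The group and action relations are immediate: by Lemma \ref{lem:funtional-2cocycle} we have $\teta|_{A_0\ot A+A\ot A_0}=0$, so $\sigma=e^{\teta}$ and $\sigma^{-1}$ are trivial as soon as one argument lies in $\Bbbk\dm$, whence $a\cdot_\sigma b=ab$ whenever $a$ or $b$ is group-like. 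Thus $g^2=1=h^m$, $ghg=h^{m-1}$ and $g\cdot_\sigma y_1^{(p,q)}=y_2^{(p,q)}\cdot_\sigma g$, $h\cdot_\sigma y_r^{(p,q)}=\omega^{(-1)^{r-1}q}y_r^{(p,q)}\cdot_\sigma h$ hold in $H_\sigma$ exactly as in $H$.

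The core of the argument is the deformation of the quadratic relations, for which I would invoke Lemma \ref{lem:eta=sigma}. Writing $z_1=y_r^{(p,q)}$, $z_2=y_s^{(i,k)}$ with $\delta(z_1)=h_1\ot z_1$, $\delta(z_2)=h_2\ot z_2$, that lemma gives $z_1\cdot_\sigma z_2=\eta(x_1,x_2)(1-h_1h_2)+z_1z_2$, and since $h_1,h_2$ are commuting powers of $h$ and $z_1z_2+z_2z_1=0$ holds in $H$ (the $\lambda=\gamma=0$ case of Definition \ref{def:AI}), we obtain
\[
z_1\cdot_\sigma z_2+z_2\cdot_\sigma z_1=\bigl[\eta(x_1,x_2)+\eta(x_2,x_1)\bigr](1-h_1h_2).
\]
The delicate step, requiring the four invariance conditions, is to identify the right-hand side. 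For $r=s=1$ one has $h_1h_2=h^{p+i}$ and $\eta(x_1,x_2)+\eta(x_2,x_1)=\alpha^{1,1}_{p,q,i,k}+\alpha^{1,1}_{i,k,p,q}$, which by \eqref{eq:inv-bider-I3} vanishes unless $q=m-k$; this reproduces $\delta_{q,m-k}\lambda_{p,q,i,k}(1-h^{p+i})$ with $\lambda_{p,q,i,k}=\alpha^{1,1}_{p,q,i,k}+\alpha^{1,1}_{i,k,p,q}$. For $r=1$, $s=2$ one has $h_1h_2=h^{p-i}$ and, after rewriting $\alpha^{2,1}_{i,k,p,q}=\alpha^{1,2}_{i,k,p,q}$ by \eqref{eq:inv-bider-I1}, the sum $\alpha^{1,2}_{p,q,i,k}+\alpha^{1,2}_{i,k,p,q}$, which by \eqref{eq:inv-bider-I4} vanishes unless $q=k$; this reproduces $\delta_{q,k}\gamma_{p,q,i,k}(1-h^{p-i})$ with $\gamma_{p,q,i,k}=\alpha^{1,2}_{p,q,i,k}+\alpha^{1,2}_{i,k,p,q}$. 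Condition \eqref{eq:inv-bider-I2} shows these formulas do not depend on the choice of $r$, and the constraints \eqref{eq:cond-par-1} follow at once from the symmetry of the defining sums, using that $\lambda,\gamma$ are supported on $q=m-k$, resp.\ $q=k$. Taking $(p,q)=(i,k)$ recovers the squares $[a_1^{(p,q)}]^2$, so all relations of Definition \ref{def:AI} hold and $\phi$ is a well-defined algebra map; since it sends generators to elements with matching coproducts, it is in fact a Hopf algebra map.

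It remains to see $\phi$ is bijective. As $\teta$ is concentrated in bidegree $(1,1)$ and $A$ is a graded coalgebra, the deformed product satisfies $m_\sigma=m+(\text{lower degree})$, so it is filtered and induces $m$ on the associated graded; hence $\gr H_\sigma\cong\toba(M_I)\#\Bbbk\dm$ is generated in degrees $0$ and $1$, and therefore so is $H_\sigma$. Since the image of $\phi$ contains $g,h$ and all $y_r^{(p,q)}$, it is all of $H_\sigma$, i.e.\ $\phi$ is surjective; and as $\dim A_I(\lambda,\gamma)=\dim H=\dim H_\sigma=2m\cdot 4^{|I|}$ by \cite{FG}, $\phi$ is an isomorphism. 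Finally, every lifting datum arises this way: given $(\lambda,\gamma)$ obeying \eqref{eq:cond-par-1}, setting $\alpha^{1,1}_{p,q,i,k}=\alpha^{2,2}_{p,q,i,k}=\tfrac12\lambda_{p,q,i,k}$ and $\alpha^{1,2}_{p,q,i,k}=\alpha^{2,1}_{p,q,i,k}=\tfrac12\gamma_{p,q,i,k}$ satisfies \eqref{eq:inv-bider-I1}--\eqref{eq:inv-bider-I4} and recovers $(\lambda,\gamma)$, so each $A_I(\lambda,\gamma)$ is a cocycle deformation of $H$. The main obstacle is the bookkeeping of the second paragraph: tracking how each invariance condition produces the correct Kronecker delta and symmetry so that the deformed sums match Definition \ref{def:AI} on the nose.
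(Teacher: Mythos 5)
Your proposal is correct and follows essentially the same route as the paper: compute $a_r^{(p,q)}\cdot_\sigma a_s^{(i,k)}$ via Lemma \ref{lem:eta=sigma}, use the invariance conditions \eqref{eq:inv-bider-I1}--\eqref{eq:inv-bider-I4} to produce the Kronecker deltas and the symmetric coefficients $\lambda_{p,q,i,k}=\alpha^{r,r}_{p,q,i,k}+\alpha^{r,r}_{i,k,p,q}$, $\gamma_{p,q,i,k}=\alpha^{r,s}_{p,q,i,k}+\alpha^{r,s}_{i,k,p,q}$, and conclude by comparing dimensions. The only additions are welcome refinements of details the paper leaves implicit, namely the filtered-algebra argument showing $H_\sigma$ is generated in degrees $0$ and $1$ (so the map out of $A_I(\lambda,\gamma)$ is surjective) and the explicit choice of $\alpha$'s realizing an arbitrary lifting datum.
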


\pf 
To show that $ H_{\sigma}$ is isomorphic to $A_{I}(\lambda, \gamma)$
it suffices to prove that the generators of $ H_{\sigma} $ satisfy the 
relations given in Definition \ref{def:AI}, for this would imply 
that there exists a Hopf algebra surjection 
$ H_{\sigma} \twoheadrightarrow A_{I}$ and since both
algebras have the same dimension they must be isomorphic.

For $(p,q)\in I$ and $1\leq r\leq 2$, denote $ a_{r}^{(p,q)}= 
y_{r}^{(p,q)}\# 1 \in \toba(M_{I})\# \k \dm$.
Then by Lemma 
\ref{lem:eta=sigma} we have  
for all $(p,q),(i,k) \in I$ and $r,s=1,2$ that
\begin{align*}
a_{r}^{(p,q)}\cdot_{\sigma} a_{s}^{(i,k)}  
& = \eta(y_{r}^{(p,q)}, y_{s}^{(i,k)})
(1 - h^{p(-1)^{r-1}}h^{i(-1)^{s-1}}) +
a_{r}^{(p,q)} a_{s}^{(i,k)} 
\\
  &= \alpha^{r,s}_{p,q,i,k}(1- h^{p(-1)^{r-1}+i(-1)^{s-1}}) +
a_{r}^{(p,q)} a_{s}^{(i,k)}.
\end{align*}
Using Lemma \ref{lem:inv-bider-I} we obtain that
\begin{align*}
a_{1}^{(p,q)}\cdot_{\sigma} a_{1}^{(i,k)} +
a_{1}^{(i,k)}\cdot_{\sigma} a_{1}^{(p,q)}  & = 
a_{1}^{(p,q)} a_{1}^{(i,k)} +  a_{1}^{(i,k)}a_{1}^{(p,q)} +
\delta_{q,m-k}(\alpha^{1,1}_{p,q,i,k} + 
\alpha^{1,1}_{i,k,p,q})(1- h^{p+i})\\
& = 
\delta_{q,m-k}(\alpha^{1,1}_{p,q,i,k} + 
\alpha^{1,1}_{i,k,p,q})(1- h^{p+i})\qquad\text{ and }\\
a_{1}^{(p,q)}\cdot_{\sigma} a_{2}^{(i,k)} +
a_{2}^{(i,k)}\cdot_{\sigma} a_{1}^{(p,q)}  & = 
a_{1}^{(p,q)} a_{2}^{(i,k)} +  a_{2}^{(i,k)}a_{1}^{(p,q)} +
\delta_{q,k}(\alpha^{1,2}_{p,q,i,k} + 
\alpha^{2,1}_{i,k,p,q})(1- h^{p-i})\\
& = 
\delta_{q,k}(\alpha^{1,2}_{p,q,i,k} + 
\alpha^{1,2}_{i,k,p,q})(1- h^{p-i}).
\end{align*}
Thus, defining $\lambda_{p,q,i,k} = \alpha^{r,r}_{p,q,i,k} + 
\alpha^{r,r}_{i,k,p,q}$ and 
$\gamma_{p,q,i,k} = \alpha^{r,s}_{p,q,i,k} + 
\alpha^{r,s}_{i,k,p,q}$ with $1\leq r\neq s \leq 2$ we 
get that condition \eqref{eq:cond-par-1} is satisfied. 
Since the other relations follows from the Yetter-Drinfeld
structure of $M_{I}$, the theorem is proved.
\epf

\begin{obs} Note that given a lifting datum $(\lambda,\gamma)$, 
using Lemma \ref{lem:inv-bider-I} and
Theorem \ref{thm:AI-cocycle} one is able to construct a multiplicative 
$2$-cocycle that gives the desired 
deformation of $\toba(M_{I})\#\Bbbk \dm$.
\end{obs}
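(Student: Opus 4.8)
The plan is to exploit that $H_{\sigma}$ and $H$ coincide as coalgebras and hence have the same dimension $4^{|I|}\cdot 2m$, which equals $\dim A_{I}(\lambda,\gamma)$. It therefore suffices to produce a surjective Hopf algebra map $A_{I}(\lambda,\gamma)\twoheadrightarrow H_{\sigma}$ and invoke the dimension count. Setting $a_{r}^{(p,q)}:=y_{r}^{(p,q)}\#1\in H_{\sigma}$, these elements together with $g,h$ generate $H_{\sigma}$ and, since $\com$ is unchanged by the deformation, they are grouplike resp.\ $(1,h^{p(-1)^{r-1}})$-primitive with exactly the coproducts prescribed in Definition~\ref{def:AI}. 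By the presentation of $A_{I}(\lambda,\gamma)$, once I check that $g,h,a_{r}^{(p,q)}$ satisfy the defining relations for the deformed product $\cdot_{\sigma}$, the assignment on generators extends to a Hopf algebra map which is automatically onto. Thus the entire argument reduces to verifying the relations of Definition~\ref{def:AI} in $H_{\sigma}$.

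The core is the quadratic part, the only one the deformation moves. From the coactions $\delta(y_{1}^{(p,q)})=h^{p}\ot y_{1}^{(p,q)}$ and $\delta(y_{2}^{(p,q)})=h^{-p}\ot y_{2}^{(p,q)}$, the grouplike attached to $a_{r}^{(p,q)}$ is $h^{p(-1)^{r-1}}$, and since $\eta(y_{r}^{(p,q)},y_{s}^{(i,k)})=\alpha^{r,s}_{p,q,i,k}$, Lemma~\ref{lem:eta=sigma} yields at once
$$
a_{r}^{(p,q)}\cdot_{\sigma}a_{s}^{(i,k)}=\alpha^{r,s}_{p,q,i,k}\bigl(1-h^{p(-1)^{r-1}+i(-1)^{s-1}}\bigr)+a_{r}^{(p,q)}a_{s}^{(i,k)}.
$$
Adding the two orders and using that the undeformed relations of $H=\toba(M_{I})\#\Bbbk\dm$ are the exterior-algebra anticommutators $a_{1}^{(p,q)}a_{1}^{(i,k)}+a_{1}^{(i,k)}a_{1}^{(p,q)}=0$ and $a_{1}^{(p,q)}a_{2}^{(i,k)}+a_{2}^{(i,k)}a_{1}^{(p,q)}=0$, the quadratic terms cancel and only the grouplike contributions $(\alpha^{r,r}_{p,q,i,k}+\alpha^{r,r}_{i,k,p,q})(1-h^{p+i})$ and $(\alpha^{r,s}_{p,q,i,k}+\alpha^{r,s}_{i,k,p,q})(1-h^{p-i})$ remain.

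Here Lemma~\ref{lem:inv-bider-I} does the book-keeping: \eqref{eq:inv-bider-I3} forces the diagonal coefficient to vanish unless $q=m-k$, producing the factor $\delta_{q,m-k}$, while \eqref{eq:inv-bider-I4} produces $\delta_{q,k}$ in the off-diagonal case; equations \eqref{eq:inv-bider-I1}--\eqref{eq:inv-bider-I2} make the definitions $\lambda_{p,q,i,k}=\alpha^{r,r}_{p,q,i,k}+\alpha^{r,r}_{i,k,p,q}$ and $\gamma_{p,q,i,k}=\alpha^{r,s}_{p,q,i,k}+\alpha^{r,s}_{i,k,p,q}$ independent of the choice of $r$ (resp.\ of $r\neq s$), and the symmetry of these formulas under swapping $(p,q)\leftrightarrow(i,k)$ is precisely the compatibility \eqref{eq:cond-par-1}. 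All remaining relations $g^{2}=1=h^{m}$, $ghg=h^{m-1}$, $ga_{1}^{(p,q)}=a_{2}^{(p,q)}g$ and $ha_{r}^{(p,q)}=\omega^{\pm q}a_{r}^{(p,q)}h$ involve a degree-zero factor, so by Lemma~\ref{lem:funtional-2cocycle} ($\tilde\eta|_{A_{0}\ot A+A\ot A_{0}}=0$) the product $\cdot_{\sigma}$ agrees with the original one on such pairs and these relations are inherited verbatim from the bosonization, where they follow from the Yetter-Drinfeld structure recorded in Remark~\ref{rmk:cond-action-I}. The ``in particular'' clause then follows by reversing the parametrization: given any lifting datum, \eqref{eq:cond-par-1} reads $\lambda_{p,q,i,k}=\lambda_{i,k,p,q}$ when $q=m-k$ (similarly for $\gamma$), so the choice $\alpha^{r,r}_{p,q,i,k}=\tfrac12\lambda_{p,q,i,k}$, $\alpha^{r,s}_{p,q,i,k}=\tfrac12\gamma_{p,q,i,k}$ defines an admissible $\eta$ realizing it.

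I expect the only genuinely delicate point to be the correct application of Lemma~\ref{lem:eta=sigma}: one must confirm its hypothesis that $\sigma=e^{\tilde\eta}$ is a multiplicative $2$-cocycle (supplied by Lemma~\ref{lem:AI-cocycle}, using that the braiding on $\yddm$ is symmetric) and read off the grouplikes $h_{1}h_{2}$ from the coactions so that the exponents $p(-1)^{r-1}+i(-1)^{s-1}$ collapse to exactly $p+i$ and $p-i$ on the index ranges selected by the Kronecker deltas. Everything else is routine verification.
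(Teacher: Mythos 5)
Your proposal is correct and follows the same route the paper intends: the bulk of your argument simply re-derives Theorem \ref{thm:AI-cocycle} (surjection onto $A_{I}(\lambda,\gamma)$ plus a dimension count, with Lemma \ref{lem:eta=sigma} handling the quadratic relations and Lemma \ref{lem:inv-bider-I} the bookkeeping), and the actual content of the remark is your final inversion step, where the symmetric choice $\alpha^{r,r}_{p,q,i,k}=\tfrac12\lambda_{p,q,i,k}$, $\alpha^{r,s}_{p,q,i,k}=\tfrac12\gamma_{p,q,i,k}$ (supported only where the Kronecker deltas permit) visibly satisfies \eqref{eq:inv-bider-I1}--\eqref{eq:inv-bider-I4} and recovers the given lifting datum precisely because of \eqref{eq:cond-par-1}. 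This is exactly the construction the remark alludes to, so no gap remains.
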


\subsubsection{Cocycle deformations 
and the algebras $B_{I,L}(\lambda,\gamma, \theta, \mu)$}
Let $ (I,L) \in \K $ and 
consider the Nichols algebra $ \toba(M_{I, L}) $. 
For all $ (p,q) \in I, \ell \in L $, consider the linear maps  
$ d_{1}^{(p,q)}, d_{2}^{(p,q)}$
and $ d_{1}^{(\ell)}, d_{2}^{(\ell)} $ on $M_{I, L}$
given by the rules 
\begin{align*}
d_{r}^{(p,q)}(y_{s}^{(i,k)})  = \delta_{r,s}\delta_{p,i}\delta_{q,k},\quad
d_{r}^{(p,q)}(x_{s}^{(\ell)}) = 0,\quad
d_{r}^{(\ell)}(x_{s}^{(\ell')}) & = \delta_{r,s}\delta_{\ell,\ell'},\quad
d_{r}^{(\ell)}(y_{s}^{(i,k)})  = 0.
\end{align*}
for all $r,s=1,2$, $(p,q), (i,k) \in I,\ \ell \in L $. By Subsection \ref{subsec:hoch-H-inv} the
following map defines a Hochschild $2$-cocycle on $ \toba(M_{I, L}) $
\begin{align*}
\eta & = \sum_{\substack{(p,q),(i,k) \in I,\\ 1\leq r,s\leq 2}} \alpha_{p,q, i,k}^{r,s}
d_{r}^{(p,q)} \ot d_{s}^{(i,k)} + \sum_{\substack{(p,q) \in I,\ell \in L \\ 1\leq r,s\leq 2}} 
[\beta_{p,q, \ell}^{r,s}
d_{r}^{(p,q)} \ot d_{s}^{(\ell)} + \zeta_{p,q, \ell}^{r,s}
d_{s}^{(\ell)} \ot d_{r}^{(p,q)}] + \\
& +   
\sum_{\substack{\ell,\ell' \in L \\ 1\leq r,s\leq 2}} \xi_{\ell, \ell'}^{r,s}
d_{r}^{(\ell)} \ot d_{s}^{(\ell')}.
\end{align*}
  
\begin{lema}\label{lem:inv-bider-I-L}
 $ \eta $ is $ \dm $-invariant if and only if the following conditions hold:
 \eqref{eq:inv-bider-I1}--\eqref{eq:inv-bider-I4} 
 from Lemma \ref{lem:inv-bider-I},  
\begin{eqnarray}
\beta_{p,q,\ell}^{r,s}  =  & \beta_{p,q, \ell}^{s,r} & \forall 
(p,q)\in I,\ell \in L, r,s=1, 2,\label{eq:inv-bider-I-L-5}\\
\beta_{p,q,\ell}^{1,1}  =  & \beta_{p,q, \ell}^{2,2} & \forall 
(p,q)\in I,\ell \in L,\label{eq:inv-bider-I-L-6}\\
\beta_{p,q,\ell}^{r,r}  =  & \delta_{q,m-\ell}\beta_{p,m-\ell, \ell}^{r,r} & \forall 
(p,q)\in I,\ell \in L, r=1, 2,\label{eq:inv-bider-I-L-7}\\
\beta_{p,q,\ell}^{r,s}  =  & \delta_{q,\ell}\beta_{p,\ell, \ell}^{r,s} & \forall 
(p,q)\in I,\ell \in L, 1\leq r\neq s\leq 2,\label{eq:inv-bider-I-L-8}\\
\xi_{\ell,\ell'}^{r,s}  =  & \xi_{\ell,\ell'}^{s,r} & \forall 
\ell,\ell'\in L, r,s=1, 2,\label{eq:inv-bider-I-L-13}\\
\xi_{\ell,\ell'}^{r,r}  =  &0 & \forall 
\ell,\ell'\in L, r=1,2,\label{eq:inv-bider-I-L-14}\\
\xi_{\ell,\ell'}^{r,s}  = & \delta_{\ell,\ell'}\xi_{\ell,\ell'}^{r,s} & \forall 
\ell,\ell'\in L, 1\leq r\neq s\leq 2,\label{eq:inv-bider-I-L-15}
\end{eqnarray}
and the coefficients $\zeta^{r,s}_{p,q,\ell}$ satisfy the same 
conditions as the coefficients $\beta^{r,s}_{p,q,\ell}$, for all
$(p,q)\in I,\ \ell \in L,\ r,s=1,2$.
 \end{lema}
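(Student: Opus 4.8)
The plan is to run the same argument as in Lemma \ref{lem:inv-bider-I}, exploiting that $\dm=\langle g,h\rangle$, so that $\eta$ is $\dm$-invariant if and only if $\eta^{g}=\eta=\eta^{h}$. First I would observe that the decomposition $M_{I,L}=M_{I}\oplus M_{L}$ is a decomposition of Yetter–Drinfeld submodules, so each summand $M_{(i,k)}$ and each $M_{\ell}$ is $\dm$-stable; consequently the $\dm$-action on $\Hom_{\Bbbk}(M_{I,L}\ot M_{I,L},\Bbbk)$ preserves the four subspaces of functionals supported on $M_{I}\ot M_{I}$, on $M_{I}\ot M_{L}$, on $M_{L}\ot M_{I}$ and on $M_{L}\ot M_{L}$. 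Hence $\eta$ is invariant if and only if each of its four homogeneous pieces --- the $\alpha$-, $\beta$-, $\zeta$- and $\xi$-parts --- is invariant, and I can treat them one at a time. The $\alpha$-part lives on $M_{I}\ot M_{I}$ and its invariance is exactly Lemma \ref{lem:inv-bider-I}, giving \eqref{eq:inv-bider-I1}--\eqref{eq:inv-bider-I4}.

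For the $\beta$-part I would use that $g$ interchanges the two basis vectors of every summand (so $[d_{1}^{(p,q)}]^{g}=d_{2}^{(p,q)}$ and $[d_{1}^{(\ell)}]^{g}=d_{2}^{(\ell)}$, and likewise with $1$ and $2$ swapped); imposing $\eta^{g}=\eta$ then forces $\beta_{p,q,\ell}^{r,s}$ to be unchanged under interchanging $1\leftrightarrow 2$ in both upper indices, which is precisely \eqref{eq:inv-bider-I-L-5}--\eqref{eq:inv-bider-I-L-6}. Since $h$ acts diagonally, $[d_{r}^{(p,q)}]^{h}=\omega^{(-1)^{r-1}q}d_{r}^{(p,q)}$ and $[d_{s}^{(\ell)}]^{h}=\omega^{(-1)^{s-1}\ell}d_{s}^{(\ell)}$, so $\eta^{h}=\eta$ is equivalent to $\beta_{p,q,\ell}^{r,s}\bigl(1-\omega^{(-1)^{r-1}q+(-1)^{s-1}\ell}\bigr)=0$; splitting into $r=s$ (which forces $q\equiv -\ell$, i.e. $q=m-\ell$) and $r\neq s$ (which forces $q\equiv\ell$) yields \eqref{eq:inv-bider-I-L-7}--\eqref{eq:inv-bider-I-L-8}. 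For the $\zeta$-part the same computation applies verbatim: the $g$-swap acts identically and the relevant $h$-exponent is again $(-1)^{r-1}q+(-1)^{s-1}\ell$, so the conditions on $\zeta_{p,q,\ell}^{r,s}$ coincide with those on $\beta_{p,q,\ell}^{r,s}$, exactly as asserted.

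The genuinely new case is the $\xi$-part on $M_{L}\ot M_{L}$. Here $g$-invariance gives the symmetry \eqref{eq:inv-bider-I-L-13}, and $h$-invariance is equivalent to $\xi_{\ell,\ell'}^{r,s}\bigl(1-\omega^{(-1)^{r-1}\ell+(-1)^{s-1}\ell'}\bigr)=0$. The key point --- and the only place where the arithmetic of $\Lc$ really enters --- is that for $r=s$ the exponent is $\pm(\ell+\ell')$, and since $\ell,\ell'$ are odd with $1\leq \ell,\ell'<n=\tfrac{m}{2}$ one has $0<\ell+\ell'<m$, so $\omega^{\pm(\ell+\ell')}\neq 1$ and $\xi_{\ell,\ell'}^{r,r}$ must vanish; this produces \eqref{eq:inv-bider-I-L-14}, with no surviving ``$q=m-k$''-type alternative as in the $\alpha$- and $\beta$-cases. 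For $r\neq s$ the exponent is $\pm(\ell-\ell')$ with $|\ell-\ell'|<n<m$, which vanishes modulo $m$ exactly when $\ell=\ell'$, giving \eqref{eq:inv-bider-I-L-15}. I expect the main (mild) obstacle to be purely bookkeeping: carefully checking that the four blocks are indeed $\dm$-stable so the invariance genuinely decouples, and that the $\beta$- and $\zeta$-exponents agree; beyond that the argument is a direct transcription of the $\alpha$-computation of Lemma \ref{lem:inv-bider-I}, with the single substantive new ingredient being the vanishing of $\xi^{r,r}$ forced by the range and parity of the $\ell$'s.
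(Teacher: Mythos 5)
Your proposal is correct and follows essentially the same route as the paper: the paper's (much terser) proof likewise reduces to checking $\eta^{g}=\eta=\eta^{h}$, invokes Lemma \ref{lem:inv-bider-I} for the $\alpha$-block, declares the $\beta$-, $\zeta$- and $\xi$-computations to go ``along the same lines,'' and singles out exactly the point you identify as the only genuinely new one, namely that \eqref{eq:inv-bider-I-L-14} has no surviving $q=m-k$--type alternative because $\ell'\equiv m-\ell \pmod m$ is impossible for $1\leq \ell,\ell'<n$ with $m=2n$. Your write-up simply supplies the block-stability observation and the explicit exponent bookkeeping that the paper leaves implicit.
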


\begin{obs}\label{rmk:17-zero}
Note that in this case, equation \eqref{eq:inv-bider-I3} implies that 
$\alpha_{p,q, p,q}^{r,r} = 0$
for all $(p,q), (i,k) \in I$, since $m=4t$, $q$ is odd for all $(p,q) \in I, (I,L) \in \K$
and $m-q \equiv q \mod m$ if and only if $m=2q$.
\end{obs}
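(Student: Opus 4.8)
The plan is to specialize the $H$-invariance constraint \eqref{eq:inv-bider-I3} to the diagonal index $(i,k)=(p,q)$ and then read off the conclusion from the parity of the exponents forced by $(I,L)\in\K$. First I would set $i=p$ and $k=q$ in \eqref{eq:inv-bider-I3}, namely $\alpha_{p,q,i,k}^{r,r}=\delta_{q,m-k}\,\alpha_{p,m-k,i,k}^{r,r}$, which gives
$$
\alpha_{p,q,p,q}^{r,r}=\delta_{q,m-q}\,\alpha_{p,m-q,p,q}^{r,r}\qquad (r=1,2).
$$
Hence it suffices to show that $\delta_{q,m-q}=0$ for every pair $(p,q)\in I$.

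Next I would determine when $\delta_{q,m-q}$ can be nonzero. Since the second coordinates satisfy $1\le q<m$, the equality $q=m-q$ holds if and only if $m=2q$, that is $q=m/2=n$. At this point the relevant hypothesis is the definition of $\K$: for $(I,L)\in\K$ every second coordinate $q$ occurring in $I$ is odd. Combined with $m=4t$, which gives $n=2t$ even, this yields $q\ne n$, so the congruence $q\equiv m-q \bmod m$ fails and $\delta_{q,m-q}=0$.

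Putting the two steps together gives $\alpha_{p,q,p,q}^{r,r}=0$ for $r=1,2$ and all $(p,q)\in I$, as asserted. I do not expect any real obstacle: the argument is a one-line specialization of \eqref{eq:inv-bider-I3} followed by elementary parity bookkeeping, the only point requiring care being that oddness of $q$ rules out $q=n$ precisely because $4\mid m$. It is perhaps worth recording why this vanishing matters: by the coefficient formula of Theorem \ref{thm:AI-cocycle} one has $\lambda_{p,q,p,q}=2\alpha_{p,q,p,q}^{r,r}$, so the conclusion forces the deformed relations $[a_{1}^{(p,q)}]^{2}=[a_{2}^{(p,q)}]^{2}=0$ of Definition \ref{def:liftings2} to hold automatically, in agreement with the presentation of $B_{I,L}(\lambda,\gamma,\theta,\mu)$.
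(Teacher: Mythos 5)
Your argument is correct and is exactly the paper's own justification: specialize \eqref{eq:inv-bider-I3} to $(i,k)=(p,q)$, observe that $\delta_{q,m-q}\neq 0$ would force $q=m/2=2t$, which is even, contradicting the oddness of $q$ required by $(I,L)\in\K$. The closing observation relating the vanishing to the relations $[a_{r}^{(p,q)}]^{2}=0$ in $B_{I,L}(\lambda,\gamma,\theta,\mu)$ is also accurate and matches how the remark is used in the proof of Theorem \ref{thm:BIL-cocycle}.
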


\pf
To prove that $ \eta $ is $ \dm $-invariant it is enough to 
show that $ \eta^{g} = \eta^{h}=\eta  $. Thus
the first four conditions follows directly from Lemma \ref{lem:inv-bider-I}.
The proof of the remaining conditions goes along the same lines. Only note 
that condition \eqref{eq:inv-bider-I-L-14} is different because 
it never holds that $ \ell' \equiv m -\ell \mod m$ since $ 1\leq \ell, \ell' <n $ and
$ m=2n $.
\epf

The proof of the following lemma is completely analogous to 
the proof of Lemma \ref{lem:AI-cocycle}.

\begin{lema}\label{lem:BIL-cocycle}
Assume $ \eta $ satisfies conditions 
\eqref{eq:inv-bider-I1} -- \eqref{eq:inv-bider-I-L-15}.
Then 
$ \sigma = e^{\tilde{\eta}}  $ is a 
multiplicative $ 2 $-cocycle for $ \toba(M_{I,L})\# \Bbbk \dm $.\qed
\end{lema}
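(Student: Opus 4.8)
The plan is to mirror exactly the proof of Lemma~\ref{lem:AI-cocycle}, transferring each ingredient to the larger Yetter--Drinfeld module $M_{I,L}$ instead of $M_I$. The three structural facts I need are already in hand: first, by hypothesis $\eta$ satisfies conditions \eqref{eq:inv-bider-I1}--\eqref{eq:inv-bider-I-L-15}, which by Lemma~\ref{lem:inv-bider-I-L} is precisely equivalent to $\eta$ being $\dm$-invariant; second, by Theorem~\ref{thm:all-nichols-dm} every finite-dimensional Nichols algebra in $\yddm$ is an exterior algebra $\bigwedge M$, so in particular $\toba(M_{I,L})\simeq\bigwedge M_{I,L}$ and the braiding $c$ on $M_{I,L}$ is symmetric, i.e.\ $c^2=\id$; and third, $H=\k\dm$ is a semisimple Hopf algebra (its antipode is an involution since $\dm$ is a group algebra), so $S_H^2=\id_H$.

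With these three facts assembled, I would invoke Corollary~\ref{cor:linear-2-cocycle} directly: it states that whenever $H$ is semisimple and $c^2=\id_V$, the exponentiation $\sigma=e^{\tilde\eta}$ of the Hochschild $2$-cocycle induced by any $H$-invariant linear functional $\eta$ on $V\otimes V$ is automatically a multiplicative $2$-cocycle. Here $V=M_{I,L}$ and $\eta$ is the given $\dm$-invariant functional, so the hypotheses of the corollary are met verbatim. Alternatively, and this is the route taken in Lemma~\ref{lem:AI-cocycle}, I would chain Lemmas~\ref{lem:cond-eps-biderivations}, \ref{lemma-involution-1} and \ref{lemma-involution-2}: invariance plus $c^2=\id$ and $S_H^2=\id$ forces conditions \eqref{eq1} and \eqref{eq2} of Lemma~\ref{lem:cond-eps-biderivations}$(a)$, which in turn yields the commuting conditions $(b)$ and $(c)$ needed to apply Lemma~\ref{le:cond-comm-cocycle}, whose conclusion is exactly that $e^{\tilde\eta}$ is a multiplicative $2$-cocycle.

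Honestly, there is no real obstacle here, which is why the statement preceding this lemma already asserts that ``the proof is completely analogous to the proof of Lemma~\ref{lem:AI-cocycle}.'' The only point deserving a moment's attention is verifying that the hypotheses truly do transfer without modification: the functional $\eta$ on $M_{I,L}$ now carries the extra blocks $\beta$, $\zeta$ and $\xi$ beyond the $\alpha$ block of the $M_I$ case, but Lemma~\ref{lem:inv-bider-I-L} has already packaged the full invariance characterization for all of these coefficients, so once we know $\eta$ is $\dm$-invariant the abstract machinery neither sees nor cares about the internal decomposition of $\eta$. Thus the proof is a one-line appeal to the earlier results.

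\begin{proof}
By assumption $\eta$ satisfies conditions \eqref{eq:inv-bider-I1}--\eqref{eq:inv-bider-I-L-15}, so by Lemma~\ref{lem:inv-bider-I-L} the functional $\eta$ is $\dm$-invariant. Since $\k\dm$ is semisimple and, by Theorem~\ref{thm:all-nichols-dm}, the braiding of $M_{I,L}$ in $\yddm$ is symmetric, i.e.\ $c^2=\id_{M_{I,L}}$, Corollary~\ref{cor:linear-2-cocycle} applies and yields that $\sigma=e^{\tilde\eta}$ is a multiplicative $2$-cocycle for $\toba(M_{I,L})\#\k\dm$.
\end{proof}
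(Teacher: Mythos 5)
Your proof is correct and follows essentially the same route as the paper: the paper's (implicit) proof is exactly the one-line reduction to the argument of Lemma \ref{lem:AI-cocycle}, i.e.\ invariance via Lemma \ref{lem:inv-bider-I-L}, symmetry of the braiding via Theorem \ref{thm:all-nichols-dm}, semisimplicity of $\k\dm$, and then Corollary \ref{cor:linear-2-cocycle} (equivalently the chain of Lemmas \ref{lem:cond-eps-biderivations}, \ref{lemma-involution-1}, \ref{lemma-involution-2} and \ref{le:cond-comm-cocycle}). Your observation that the extra coefficient blocks $\beta$, $\zeta$, $\xi$ play no role once invariance is established is exactly why the paper omits the proof.
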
 

\begin{theorem}\label{thm:BIL-cocycle}
Let $ H = \toba(M_{I,L})\# \Bbbk \dm $ and $\sigma = e^{\tilde{\eta}}$
be the multiplicative $2-cocycle$ given by \emph{Lemma \ref{lem:BIL-cocycle}}. Then 
$ H_{\sigma}\simeq B_{I,L}(\lambda, \gamma,\theta,\mu)$ with 
$\lambda_{p,q,i,k}=\alpha^{r,r}_{p,q,i,k}+\alpha^{r,r}_{i,k,p,q}$,
$\gamma_{p,q,i,k}=\alpha^{r,s}_{p,q,i,k}+\alpha^{r,s}_{i,k,p,q}$,
$\theta_{p,q,\ell}=\beta^{1,1}_{p,q,\ell}+\zeta^{1,1}_{p,q,\ell}$,
and $\mu_{p,q,\ell}=\beta^{1,2}_{p,q,\ell}+\zeta^{1,2}_{p,q,\ell}$,
for all $(p,q)\in I, \ell \in L$. In particular, $B_{I,L}(\lambda, \gamma,\theta,\mu)$ is 
a cocycle deformation of $H$ for all lifting datum.
\end{theorem}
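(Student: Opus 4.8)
The plan is to follow verbatim the strategy of the proof of Theorem \ref{thm:AI-cocycle}. Since $H_{\sigma}=H$ as coalgebras and $B_{I,L}(\lambda,\gamma,\theta,\mu)$ has the same dimension as $H$ (being a lifting of $\toba(M_{I,L})\#\Bbbk\dm$), it suffices to produce a surjective Hopf algebra map $H_{\sigma}\twoheadrightarrow B_{I,L}(\lambda,\gamma,\theta,\mu)$; equality of dimensions then forces it to be an isomorphism. To build such a map I verify that the images $a_{r}^{(p,q)}:=y_{r}^{(p,q)}\#1$ and $b_{r}^{(\ell)}:=x_{r}^{(\ell)}\#1$ of the algebra generators, together with $g,h$, satisfy the defining relations of Definition \ref{def:liftings2} with respect to the deformed product $\cdot_{\sigma}$.

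First, all relations involving $g,h$ alone and all commutation relations between $g,h$ and the $a_{r}^{(p,q)},b_{r}^{(\ell)}$ are \emph{unchanged} by the deformation: because $\tilde{\eta}$ vanishes on $A_{0}\ot A+A\ot A_{0}$ by Lemma \ref{lem:funtional-2cocycle}, we have $\sigma(s,-)=\eps$ and $\sigma(-,s)=\eps$ whenever $s\in A_{0}$, so products with group-likes are undeformed. Hence these relations follow directly from the Yetter-Drinfeld structure recorded in \eqref{eq:ydik} and the structure of the $M_{\ell}$. For the quadratic relations among the generators I apply Lemma \ref{lem:eta=sigma} to each pair; writing $\delta(y_{r}^{(p,q)})=h^{p(-1)^{r-1}}\ot y_{r}^{(p,q)}$ and $\delta(x_{r}^{(\ell)})=h^{n}\ot x_{r}^{(\ell)}$ it yields
\begin{align*}
a_{r}^{(p,q)}\cdot_{\sigma}a_{s}^{(i,k)}&=\alpha^{r,s}_{p,q,i,k}\,(1-h^{p(-1)^{r-1}+i(-1)^{s-1}})+a_{r}^{(p,q)}a_{s}^{(i,k)},\\
a_{r}^{(p,q)}\cdot_{\sigma}b_{s}^{(\ell)}&=\beta^{r,s}_{p,q,\ell}\,(1-h^{p(-1)^{r-1}+n})+a_{r}^{(p,q)}b_{s}^{(\ell)},\\
b_{s}^{(\ell)}\cdot_{\sigma}a_{r}^{(p,q)}&=\zeta^{r,s}_{p,q,\ell}\,(1-h^{n+p(-1)^{r-1}})+b_{s}^{(\ell)}a_{r}^{(p,q)},\\
b_{r}^{(\ell)}\cdot_{\sigma}b_{s}^{(\ell')}&=\xi^{r,s}_{\ell,\ell'}\,(1-h^{2n})+b_{r}^{(\ell)}b_{s}^{(\ell')}=b_{r}^{(\ell)}b_{s}^{(\ell')},
\end{align*}
where the last equality uses $h^{2n}=h^{m}=1$.

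Thus the $b$-$b$ relations stay rigid, and together with Remark \ref{rmk:17-zero} (which forces $\alpha^{r,r}_{p,q,p,q}=0$) this shows that in $H_{\sigma}$ one still has $[a_{r}^{(p,q)}]^{2}=0$, $b_{1}^{(\ell)}b_{2}^{(\ell')}+b_{2}^{(\ell')}b_{1}^{(\ell)}=0$ and $b_{1}^{(\ell)}b_{1}^{(\ell')}+b_{1}^{(\ell')}b_{1}^{(\ell)}=0$. For the remaining relations I symmetrize and use that the corresponding braided commutator relations of $\toba(M_{I,L})$ hold with zero right-hand side in the bosonization $H$, so the undeformed brackets cancel and only the $\eta$-corrections survive. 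Invoking the invariance conditions of Lemma \ref{lem:inv-bider-I-L}, condition \eqref{eq:inv-bider-I-L-7} gives
\begin{align*}
a_{1}^{(p,q)}\cdot_{\sigma}b_{1}^{(\ell)}+b_{1}^{(\ell)}\cdot_{\sigma}a_{1}^{(p,q)}=\delta_{q,m-\ell}\,(\beta^{1,1}_{p,q,\ell}+\zeta^{1,1}_{p,q,\ell})\,(1-h^{n+p}),
\end{align*}
so setting $\theta_{p,q,\ell}=\beta^{1,1}_{p,q,\ell}+\zeta^{1,1}_{p,q,\ell}$ recovers the $\theta$-relation; the $\mu$-relation follows identically from the $a_{1}$-$b_{2}$ pair via \eqref{eq:inv-bider-I-L-8} with $\mu_{p,q,\ell}=\beta^{1,2}_{p,q,\ell}+\zeta^{1,2}_{p,q,\ell}$, while the $\lambda,\gamma$ relations and the symmetry \eqref{eq:cond-par-1} are exactly as in Theorem \ref{thm:AI-cocycle}. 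I do not expect a genuine obstacle: the argument is a longer but direct bookkeeping analog of Theorem \ref{thm:AI-cocycle}, and the only point meriting care is the behaviour of the $b$-generators, where the vanishing factor $1-h^{2n}=0$ guarantees that the purely $b$-type relations are undeformed and only the mixed $a$-$b$ relations acquire the parameters $\theta$ and $\mu$.
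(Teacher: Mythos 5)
Your proposal is correct and follows essentially the same route as the paper's proof: verify the relations of Definition \ref{def:liftings2} for the generators under $\cdot_{\sigma}$ via Lemma \ref{lem:eta=sigma}, reuse the Theorem \ref{thm:AI-cocycle} computation for the $a$-$a$ relations, observe that the $b$-$b$ relations are rigid because $1-h^{2n}=0$, and read off $\theta$ and $\mu$ from the mixed $a$-$b$ brackets using the invariance conditions, concluding by a surjection-plus-dimension argument.
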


\pf
As in the proof of Theorem \ref{thm:AI-cocycle},
it suffices to show that the generators of $ H_{\sigma} $ satisfy the 
relations given in Definition \ref{def:liftings2}.
For $(p,q)\in I, \ell \in L$ and $1\leq r\leq 2$, denote $ a_{r}^{(p,q)}= 
y_{r}^{(p,q)}\# 1$ and $b_{r}^{(\ell)} = x_{r}^{(\ell)}\#1 \in 
\toba(M_{I,L})\# \k \dm$.

Since $\tilde{\eta}$ coincides with the multiplicative
cocycle given by Lemma \ref{lem:AI-cocycle} when it
takes values in $\{a_{r}^{(p,q)}:\ (p,q)\in I, r=1,2\}$, 
by the proof of Theorem \ref{thm:AI-cocycle} we have that the 
equations involving the generators $a_{r}^{(p,q)}$ are satisfied. 
In particular, since $q$ is odd for all $(p,q)$ we have that 
$q\not\equiv m-q \mod m$ for all $(p,q) \in I$ and 
by Lemma \ref{lem:eta=sigma} 
$$a_{r}^{(p,q)}\cdot_{\sigma} a_{r}^{(p,q)} = 
 [a_{r}^{(p,q)}]^{2} +
\delta_{q,m-q}\alpha^{r,r}_{p,q,p,q}(1- h^{2p(-1)^{r-1}})= 0.$$
Moreover, again by Lemma \ref{lem:eta=sigma}
we get that  
$$
 b_{r}^{(\ell)}\cdot_{\sigma} b_{s}^{(\ell') }
= \eta(x_{r}^{(\ell)}, x_{s}^{(\ell') })( 1 - h^{n}h^{n} )+
b_{r}^{(\ell)} b_{s}^{(\ell') } = b_{r}^{(\ell)} b_{s}^{(\ell') }
\quad\text{ for all }\ell, \ell' \in L, r,s=1,2.
$$
 
Hence, using the relations of the Nichols algebra $\toba(M_{I,L})$
we have that
$$  b_{r}^{(\ell)}\cdot_{\sigma} b_{s}^{(\ell') } + 
 b_{s}^{(\ell') }\cdot_{\sigma}
b_{r}^{(\ell)} = b_{r}^{(\ell)}b_{s}^{(\ell') } + 
 b_{s}^{(\ell') }
b_{r}^{(\ell)} = 0
\quad\text{ for all  }\ell, \ell' \in L, r,s=1,2. 
$$
Besides, by \eqref{eq:inv-bider-I-L-7} we get
\begin{align*}
 a_{1}^{(p,q)}\cdot_{\sigma} b_{1}^{(\ell) } 
& = \eta(y_{1}^{(p,q)}, x_{1}^{(\ell) }) (1 - h^{p}h^{n} ) +
a_{1}^{(p,q)} b_{1}^{(\ell) } 
= \delta_{q,m-\ell}\beta^{1,1}_{p,q, \ell }(1 - h^{p+n})+
 a_{1}^{(p,q)} b_{1}^{(\ell) }\qquad \text{ and }\\
 b_{1}^{(\ell) }\cdot_{\sigma} a_{1}^{(p,q)} 
& = \eta(x_{1}^{(\ell) }, y_{1}^{(p,q)}) (1 - h^{n}h^{p} ) +
b_{1}^{(\ell) }a_{1}^{(p,q)}  
= \delta_{q,m-\ell}\zeta^{1,1}_{p,q, \ell }(1 - h^{p+n})+
 b_{1}^{(\ell) } a_{1}^{(p,q)},
\end{align*}
for all $(p,q)\in I, \ell \in L$.
Hence, using again the relations of the Nichols algebra $\toba(M_{I,L})$
we have
$$ a_{1}^{(p,q)}\cdot_{\sigma} b_{1}^{(\ell) } + 
  b_{1}^{(\ell) }\cdot_{\sigma} a_{1}^{(p,q)}  = 
\delta_{q,m-\ell}(\beta^{1,1}_{p,q, \ell } + \zeta^{1,1}_{p,q, \ell })(1 - h^{p+n}).
$$
If we set $\theta_{p,q,\ell} = \beta^{1,1}_{p,q,\ell} + 
\zeta^{1,1}_{p,q,\ell}$ with $(p,q)\in I, \ell \in L$, then
the condition involving the generators $a_{1}^{(p,q)}, b_{1}^{(\ell) }$
is satisfied.
Finally, by \eqref{eq:inv-bider-I-L-8} we have that
\begin{align*}
 a_{1}^{(p,q)}\cdot_{\sigma} b_{2}^{(\ell) } & 
= \eta(y_{1}^{(p,q)}, x_{2}^{(\ell) }) (1 - h^{p}h^{n} ) +
a_{1}^{(p,q)} b_{2}^{(\ell) }
= \delta_{q,\ell}\beta^{1,2}_{p,q, \ell }(1 - h^{p+n})+
 a_{1}^{(p,q)} b_{2}^{(\ell) }\qquad \text{ and }\\
 b_{2}^{(\ell) }\cdot_{\sigma} a_{1}^{(p,q)} & 
= \eta(x_{2}^{(\ell) }, y_{1}^{(p,q)}) (1 - h^{n}h^{p} ) +
b_{2}^{(\ell) }a_{1}^{(p,q)}
= \delta_{q,\ell}\zeta^{1,2}_{p,q, \ell }(1 - h^{p+n})+
 b_{2}^{(\ell) } a_{1}^{(p,q)},
\end{align*}
for all $(p,q)\in I, \ell \in L$.
Thus
$$ a_{1}^{(p,q)}\cdot_{\sigma} b_{2}^{(\ell) } + 
  b_{2}^{(\ell) }\cdot_{\sigma} a_{1}^{(p,q)}  = 
\delta_{q,\ell}(\beta^{1,2}_{p,q, \ell } + \zeta^{1,2}_{p,q, \ell })(1 - h^{p+n}).
$$
Defining $\mu_{p,q,\ell} = \beta^{1,2}_{p,q,\ell} + 
\zeta^{1,2}_{p,q,\ell}$ with $(p,q)\in I, \ell \in L$, it follows that
the condition involving the generators $a_{1}^{(p,q)}, b_{2}^{(\ell) }$
is satisfied.
Since the other relations follows from the Yetter-Drinfeld
structure of $M_{I,L}$, the theorem is proved.
\epf

\begin{obs} Note that given a lifting datum $(\lambda,\gamma, \theta, \mu)$, using 
Lemma \ref{lem:inv-bider-I-L} and
Theorem \ref{thm:BIL-cocycle} one is able to construct a 
multiplicative $2$-cocycle that give the desired 
deformation.
\end{obs}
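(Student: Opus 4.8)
The plan is to follow verbatim the strategy of Theorem~\ref{thm:AI-cocycle}. I will exhibit a surjective Hopf algebra map $H_\sigma\twoheadrightarrow B_{I,L}(\lambda,\gamma,\theta,\mu)$ by checking that the candidate generators $a_r^{(p,q)}=y_r^{(p,q)}\#1$ and $b_r^{(\ell)}=x_r^{(\ell)}\#1$ of $H_\sigma$ satisfy the defining relations of Definition~\ref{def:liftings2}; since $H_\sigma=H$ as a coalgebra and $B_{I,L}$ is a lifting of the same bosonization $\toba(M_{I,L})\#\Bbbk\dm$, the two algebras have equal dimension, so any such surjection is forced to be an isomorphism. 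The only computational input is Lemma~\ref{lem:eta=sigma}, which replaces a product of $(1,h_i)$-primitives by $z_1\cdot_\sigma z_2=\eta(x_1,x_2)(1-h_1h_2)+z_1z_2$.

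First I would reuse the work already done. Because $\tilde\eta$ restricted to the span of the $y$'s coincides with the cocycle of Lemma~\ref{lem:AI-cocycle}, the relations among the $a_r^{(p,q)}$---including the identifications $\lambda_{p,q,i,k}=\alpha^{r,r}_{p,q,i,k}+\alpha^{r,r}_{i,k,p,q}$ and $\gamma_{p,q,i,k}=\alpha^{r,s}_{p,q,i,k}+\alpha^{r,s}_{i,k,p,q}$---are exactly those obtained in the proof of Theorem~\ref{thm:AI-cocycle}. For the squares $[a_r^{(p,q)}]^2$, Lemma~\ref{lem:eta=sigma} produces a deformation term proportional to $\delta_{q,m-q}\alpha^{r,r}_{p,q,p,q}$, which vanishes by Remark~\ref{rmk:17-zero}, since $q$ odd and $m=4t$ force $q\not\equiv m-q\bmod m$.

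The genuinely new computations concern the $b$'s. The key simplification is that each $x_r^{(\ell)}$ is homogeneous of degree $h^n$, so the deformation factor in Lemma~\ref{lem:eta=sigma} becomes $1-h^nh^n=1-h^{2n}=1-h^m=0$; hence $b_r^{(\ell)}\cdot_\sigma b_s^{(\ell')}=b_r^{(\ell)}b_s^{(\ell')}$ and the anticommutator relations among the $b$'s transfer unchanged from $\toba(M_{I,L})$. For the mixed generators, $a_1^{(p,q)}\cdot_\sigma b_1^{(\ell)}$ carries the factor $1-h^{p+n}$ with coefficient $\eta(y_1^{(p,q)},x_1^{(\ell)})$, which by the invariance condition~\eqref{eq:inv-bider-I-L-7} equals $\delta_{q,m-\ell}\beta^{1,1}_{p,q,\ell}$; the opposite product $b_1^{(\ell)}\cdot_\sigma a_1^{(p,q)}$ contributes $\delta_{q,m-\ell}\zeta^{1,1}_{p,q,\ell}$, so adding them and setting $\theta_{p,q,\ell}=\beta^{1,1}_{p,q,\ell}+\zeta^{1,1}_{p,q,\ell}$ recovers the $\theta$-relation. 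The analogous pairing of $a_1^{(p,q)}$ with $b_2^{(\ell)}$, governed by~\eqref{eq:inv-bider-I-L-8}, produces $\mu_{p,q,\ell}=\beta^{1,2}_{p,q,\ell}+\zeta^{1,2}_{p,q,\ell}$.

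I expect the difficulty to be purely organizational rather than conceptual: the delicate point is to attach the correct group-like degree---$h^{\pm p}$ to the $a$'s and $h^n$ to the $b$'s---when invoking Lemma~\ref{lem:eta=sigma}, and then to verify that the Kronecker deltas generated by the $\dm$-invariance of $\eta$ (Lemma~\ref{lem:inv-bider-I-L}) line up exactly with those prescribed in Definition~\ref{def:liftings2}. The purely group-theoretic relations, involving only the $g$- and $h$-actions, are undeformed because they lie in $A_0$, on which $\tilde\eta$ vanishes, and hence pass over directly from the Yetter--Drinfeld structure of $M_{I,L}$.
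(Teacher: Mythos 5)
Your computations are correct and follow the paper's own proof of Theorem \ref{thm:BIL-cocycle} essentially verbatim: the reduction to Lemma \ref{lem:eta=sigma}, the vanishing of the factor $1-h^{n}h^{n}=1-h^{m}$ in the $b$--$b$ products, the appeal to Remark \ref{rmk:17-zero} for the squares $[a_{r}^{(p,q)}]^{2}$, and the identifications $\theta_{p,q,\ell}=\beta^{1,1}_{p,q,\ell}+\zeta^{1,1}_{p,q,\ell}$ and $\mu_{p,q,\ell}=\beta^{1,2}_{p,q,\ell}+\zeta^{1,2}_{p,q,\ell}$ all match. However, what you establish is the forward implication: for a \emph{given} $\dm$-invariant $\eta$, the deformation $H_{\sigma}$ is isomorphic to $B_{I,L}(\lambda,\gamma,\theta,\mu)$ with parameters computed from the coefficients of $\eta$. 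The remark asserts the converse: given an \emph{arbitrary} lifting datum $(\lambda,\gamma,\theta,\mu)$, one can produce an invariant $\eta$, hence a multiplicative $2$-cocycle, realizing exactly that datum. Nowhere do you choose the coefficients $\alpha,\beta,\zeta,\xi$ as a function of the prescribed datum, so the surjectivity of the assignment $\eta\mapsto(\lambda,\gamma,\theta,\mu)$ --- which is the actual content of the remark --- is not addressed.

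The missing step is short but should be made explicit. Set $\zeta\equiv 0$, $\xi\equiv 0$ (the $\xi$'s never enter the parameters, since the $b$--$b$ deformation terms vanish), $\beta^{1,1}_{p,q,\ell}=\beta^{2,2}_{p,q,\ell}=\delta_{q,m-\ell}\,\theta_{p,q,\ell}$, $\beta^{1,2}_{p,q,\ell}=\beta^{2,1}_{p,q,\ell}=\delta_{q,\ell}\,\mu_{p,q,\ell}$, $\alpha^{1,1}_{p,q,i,k}=\alpha^{2,2}_{p,q,i,k}=\tfrac{1}{2}\delta_{q,m-k}\,\lambda_{p,q,i,k}$ and $\alpha^{1,2}_{p,q,i,k}=\alpha^{2,1}_{p,q,i,k}=\tfrac{1}{2}\delta_{q,k}\,\gamma_{p,q,i,k}$. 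These choices satisfy conditions \eqref{eq:inv-bider-I1}--\eqref{eq:inv-bider-I-L-15} by construction, so Lemma \ref{lem:inv-bider-I-L} and Lemma \ref{lem:BIL-cocycle} apply; and the symmetry constraints \eqref{eq:cond-par-1} on the lifting datum are precisely what guarantees $\alpha^{r,r}_{p,q,i,k}+\alpha^{r,r}_{i,k,p,q}=\delta_{q,m-k}\lambda_{p,q,i,k}$ and $\alpha^{r,s}_{p,q,i,k}+\alpha^{r,s}_{i,k,p,q}=\delta_{q,k}\gamma_{p,q,i,k}$, while the $\theta$- and $\mu$-identities are immediate because $\zeta=0$. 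With this one additional observation your argument yields the remark.
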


\section{On pointed Hopf algebras over symmetric groups}\label{sec:pointed-symm}
Finite dimensional pointed Hopf algebras whose coradical is the
group algebra of the groups $\s_3$ and $\s_4$ were classified in
\cite{AHS} and \cite{GG}, respectively.
In this section, we prove that some of them are cocycle deformations
by giving, as in Section \ref{subsec:phadm}, explicitly the cocycles.

\subsection{Racks, Yetter-Drinfeld modules and Nichols
algebras over $ \sbb_{n} $}\label{subsec:ydnasn}
To present finite dimensional Nichols algebras over $\s_{n}$
we need first to introduce the notion of racks, see \cite[Def. 1.1]{AG2} for 
more details.

A \emph{rack} is a pair $(X,\rhd)$,
where $X$ is a
non-empty set and $\rhd:X\times X\to X$ is a function, such that
$\phi_i=i\rhd (\cdot):X\to X$ is a bijection for all $i\in X$
satisfying that
$
i\rhd(j\rhd k)=(i\rhd j)\rhd (i\rhd k) \text{ for all }i,j,k\in X
$. A group $G$ is a rack with $x \trid y = xyx^{-1}$ for all 
$x, y \in G$. If $G=\s_{n}$, then we denote by $\mO_j^n$
the conjugacy class of all $j$-cycles in
$\s_n$. 

Let $(X,\rhd)$ be a rack. A
\textit{rack 2-cocycle} $q:X\times X\to \Bbbk^{\times}$,
$(i,j)\mapsto q_{ij}$ is a function such that 
$ q_{i,j\rhd k}\, q_{j,k}=q_{i\rhd j,i\rhd
k}\, q_{i,k}$, for all $\,i,j,k\in X$.
It determines a braiding $c^q$ on the
vector space $\Bbbk X$ with basis $\{x_i\}_{i\in X}$ by $c^q(x_i\otimes
x_j)=q_{ij}x_{i\rhd j}\otimes x_i$ for all $i,j\in X$. We denote by
$\toba(X,q)$ the Nichols algebra of this braided vector space $(\k X, c^{q})$. 

Let $X=\mO_2^n$ with $ n\geq 3 $ or $X=\mO_4^4$ and consider the cocycles:
\begin{align*}
 -1&:X\times X\to \Bbbk^{\times}, 
&& (j,i)\mapsto \sg(j)=-1, && i,j\in X;\\
 \chi&: \mO_2^n\times \mO_2^n\to 
\Bbbk^{\times}, && (j,i)\mapsto \chi_i(j) = \begin{cases}
  1,  & \mbox{if} \  i=(a,b) \text{ and } j(a)<j(b), \\
  -1, & \mbox{if} \ i=(a,b) \text{ and } j(a)>j(b).
\end{cases}
&& i,j\in\mO_2^n.
\end{align*}
By \cite[Ex. 6.4]{MS}, \cite{Gr2}, 
\cite[Thm. 6.12]{AG3},
\cite[Prop. 2.5]{GG}, the Nichols algebras are
given by 
\begin{enumerate}
 \item[$(a)$] ${\toba}(\mO_2^n,-1)$; generated
by the elements $\{x_{(\ell m)}\}_{ 1\le \ell < m \le n}$ satisfying
for all $1\le a < b < c \le n, 1\le e < f \le n, \{a,b\}\cap\{e,f\}=\emptyset$
that 
\begin{align*}
0= x_{(ab)}^2  = x_{(ab)}x_{(ef)}+x_{(ef)} x_{(ab)} = 
x_{(ab)}x_{(bc)}+x_{(bc)} x_{(ac)}+x_{(ac)} x_{(ab)}.
\end{align*}
 \item[$(b)$] ${\toba}(\mO_2^n,\chi)$; 
generated by the elements $\{x_{(\ell m)}\}_{ 1\le \ell < m \le n}$ satisfying
for all $1\le a < b < c \le n, 1\le e < f \le n, \{a,b\}\cap\{e,f\}=\emptyset$ that
\begin{align*}
0 &= x_{(ab)}^2 =  x_{(ab)}x_{(ef)} - x_{(ef)} x_{(ab)}
= x_{(ab)}x_{(bc)} - x_{(bc)} x_{(ac)} - x_{(ac)} x_{(ab)},\\
0&= 
x_{(bc)}x_{(ab)} -  x_{(ac)}x_{(bc)} - x_{(ab)}x_{(ac)}.
\end{align*}
 \item[$(c)$]  ${\toba}(\mO_4^4,-1) $; 
generated by the elements $x_i, i\in \mO_4^4$ satisfying for 
$ij=ki$ and $ j\neq i\neq k\in
\mO_4^4$ that  
\begin{align*}
0= x_i^2=  x_i
x_{i^{-1}}+x_{i^{-1}} x_i = x_i x_j+x_k x_i+x_j x_k.
\end{align*}
\end{enumerate}

\begin{obs}\label{rmk:princ-realiz-sn} 
These Nichols algebras can be seen as Nichols 
algebras over $\s_{n}$ by a principal YD-realization (see 
\cite[Def. 3.2]{AG3}, \cite[Sec. 5]{MS}) of $(\mO_2^n,-1)$, 
$(\mO_2^n,\chi)$ over $\s_n$ or 
$(X,q)=(\mO_4^4,-1$) over $\s_4$; that is, one may describe
$\Bbbk X$ as a Yetter-Drinfeld module over $\s_{n}$.
In particular,
if we denote the elements of $\s_{n}$ by $h_{\tau}$ and
the elements of $X$ by $x_{\sigma}$ with $\sigma \in \mO^{n}_{k}$, $k=2,4$,
then 
the action and coaction of
$\s_{n}$ are determined by:
 \begin{equation}\label{eq:yd-realiz-sn}
 \delta(x_\tau)=h_{\tau} \ot x_\tau, \quad 
h_{\theta}\cdot x_\tau = \chi_\tau(h_{\theta})\, x_{\theta\trid \tau} \text{ for all }
\tau\in X,\ {\theta}\in \s_{n},
\end{equation}
where $(\chi_\tau)_{\tau \in X}$, with $\chi_\tau : \s_{n} \to \Bbbk^{\times}$, 
is a $1$-cocycle, i.~e., $\chi_\tau(\sigma\mu) = \chi_\tau(\mu)\chi_{\mu\trid\tau}(\sigma)$,
for all $\tau \in X, \sigma, \mu \in \s_{n}$, satisfying $\chi_x(y) = q_{yx}$ for all $x,y \in X$.
\end{obs}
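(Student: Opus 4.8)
The substance of the remark is that the formulas \eqref{eq:yd-realiz-sn} really do define a Yetter--Drinfeld module structure on $\Bbbk X$ over $\sn$, and that the braiding of this module coincides with the rack braiding $c^{q}$; since a Nichols algebra depends only on its braiding, this identifies $\toba(\Bbbk X)$ with $\toba(X,q)$, so the presentations in $(a)$--$(c)$ are the defining relations of a Nichols algebra in $\ydsn$. The plan is to check the structural axioms one at a time, using throughout that $X\subseteq \sn$ is a single conjugacy class on which the rack operation is conjugation, $\theta\trid\tau=\theta\tau\theta^{-1}$, and that $\theta\mapsto(\theta\trid{-})$ is a genuine left action, so that $\sigma\trid(\mu\trid\tau)=(\sigma\mu)\trid\tau$.

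First I would note that $\delta(x_\tau)=h_\tau\ot x_\tau$ merely declares $\Bbbk X$ to be the $\sn$-graded space with $x_\tau$ homogeneous of degree $h_\tau$, so the comodule axioms are automatic. For the action $h_\theta\cdot x_\tau=\chi_\tau(h_\theta)\,x_{\theta\trid\tau}$ I would verify associativity: expanding $h_\sigma\cdot(h_\mu\cdot x_\tau)=\chi_\tau(\mu)\,\chi_{\mu\trid\tau}(\sigma)\,x_{(\sigma\mu)\trid\tau}$ and comparing with $(h_\sigma h_\mu)\cdot x_\tau=\chi_\tau(\sigma\mu)\,x_{(\sigma\mu)\trid\tau}$ shows that the module axiom is equivalent to the $1$-cocycle identity $\chi_\tau(\sigma\mu)=\chi_\tau(\mu)\,\chi_{\mu\trid\tau}(\sigma)$ stated in the remark. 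The Yetter--Drinfeld compatibility then comes for free: since $h_\theta\cdot x_\tau$ is a scalar multiple of $x_{\theta\trid\tau}$, which has degree $h_\theta h_\tau h_\theta^{-1}$, the requirement $\delta(h_\theta\cdot x_\tau)=h_\theta h_\tau h_\theta^{-1}\ot h_\theta\cdot x_\tau$ from the description of $\ydsn$ holds identically.

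Granting a module in $\ydsn$, I would compute its braiding from the categorical formula $c(m\ot n)=m_{(-1)}\cdot n\ot m_{(0)}$: this gives $c(x_\tau\ot x_\sigma)=h_\tau\cdot x_\sigma\ot x_\tau=\chi_\sigma(h_\tau)\,x_{\tau\trid\sigma}\ot x_\tau$, to be compared with $c^{q}(x_\tau\ot x_\sigma)=q_{\tau\sigma}\,x_{\tau\trid\sigma}\ot x_\tau$. The two agree precisely when $\chi_\sigma(h_\tau)=q_{\tau\sigma}$, that is, under the normalization $\chi_x(y)=q_{yx}$; this is exactly the compatibility demanded between the $1$-cocycle and the rack cocycle, and it finishes the identification of $(\Bbbk X,c)$ with $(\Bbbk X,c^{q})$.

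The one genuinely nontrivial point, and where I expect the main work to sit, is the \emph{existence} of a $1$-cocycle $\chi$ on all of $\sn$ that restricts to $q$ in the sense $\chi_x(y)=q_{yx}$. For the constant cocycle $q\equiv-1$, which governs both $(\mO_2^n,-1)$ and $(\mO_4^4,-1)$, I would simply take $\chi_\tau=\sgn$: being a homomorphism it is automatically a $1$-cocycle, and $\sgn(y)=-1$ for every transposition and every $4$-cycle, so $\chi_x(y)=-1=q_{yx}$. For $(\mO_2^n,\chi)$ the candidate is the function $\chi_\tau(h_\theta)$ read off from whether $\theta(a)<\theta(b)$ or $\theta(a)>\theta(b)$ when $\tau=(a\,b)$, and here the cocycle identity must be checked by tracking how this inequality behaves under composing permutations and under the conjugation $\mu\trid\tau$. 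This combinatorial verification, together with the notion of a \emph{principal} realization, is precisely what is supplied by \cite[Def. 3.2]{AG3} and \cite[Sec. 5]{MS}; once $\chi$ is in place, the relations of $(a)$--$(c)$ are obtained by rewriting the defining relations of $\toba(X,q)$ inside $\toba(\Bbbk X)\#\Bbbk\sn$.
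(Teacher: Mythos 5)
Your verification is correct and follows exactly the route the paper intends: the remark is stated without proof, delegating the substance to \cite[Def. 3.2]{AG3} and \cite[Sec. 5]{MS}, and your computation (module axiom $\Leftrightarrow$ the $1$-cocycle identity, automatic Yetter--Drinfeld compatibility by degree bookkeeping, and the match $c(x_\tau\ot x_\sigma)=\chi_\sigma(h_\tau)x_{\tau\trid\sigma}\ot x_\tau=q_{\tau\sigma}x_{\tau\trid\sigma}\ot x_\tau$ under the normalization $\chi_x(y)=q_{yx}$) is precisely the routine check those references encode. You also correctly isolate the only genuinely nontrivial input, namely the existence of the extension $\chi$ to all of $\s_n$ (trivial via $\sgn$ for $q\equiv-1$, combinatorial for the cocycle $\chi$ on $\mO_2^n$), which is indeed what the cited sources supply.
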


\subsection{Classification of 
finite dimensional pointed Hopf algebras over $ \s_{3} $ and
$\s_{4}$}
\label{subsec:phasn}
In this subsection we present all finite dimensional pointed Hopf algebras
over $ \s_{3} $ and
$\s_{4}$  up to isomorphism. As before, first we introduce 
families of quadratic algebras. It turns out that these quadratic
algebras give all nontrivial liftings of bosonizations of
finite dimensional 
Nichols algebras.  
We follow \cite[Def. 3.7]{AG2} and \cite[3.9, 3.10]{GG}. 
Let $\Lambda, \Gamma, \lambda \in \Bbbk$ and
$t=(\Lambda, \Gamma) $.  
For $\theta, \tau \in \s_{n}$ denote $\theta\trid \tau = \theta \tau \theta^{-1} $ the 
conjugation in $\s_{n}$.

\begin{definition}\label{def:q-1}
$\mH(\mQ_n^{-1}[t])$ is the
algebra generated by $\{a_i, h_r :
i\in\mO_2^n,r\in\s_n\}$ satisfying the following relations for 
$r,s,j\in\s_n$ and $i\in \mO_2^n$:
\begin{align*}
h_e=1, \quad h_rh_s=h_{rs}, \qquad 
h_j a_i=-a_{j\trid i}h_j, 
\quad a_{(12)}^2=0,\\
a_{(12)} a_{(34)}+a_{(34)} a_{(12)}=
\Lambda(1-h_{(12)}h_{(34)}),\\
 a_{(12)} a_{(23)}+a_{(23)} a_{(13)}+a_{(13)}
a_{(12)}=\Gamma(1-h_{(12)}h_{(23)}).
\end{align*}
\end{definition}

\begin{definition}\label{def:qchi}
$\mH(\mQ_n^{\chi}[\lambda])$ is the algebra
generated by  
$\{a_i, h_r : i\in\mO_2^n,r\in\s_n\}$ satisfying the following relations for 
$r,s,j\in\s_n$ and $i\in \mO_2^n$:
\begin{align*}
h_e=1, \quad h_rh_s=h_{rs}, \quad
h_j a_i=\chi_i(j)\, a_{j\trid i}h_j, \quad
a_{(12)}^2=0,\\
 a_{(12)} a_{(34)}-a_{(34)} a_{(12)}=0\\
a_{(12)}a_{(23)}-a_{(23)}a_{(13)}-a_{(13)}a_{(12)}
=\lambda(1-h_{(12)}h_{(23)}).
\end{align*}
\end{definition}

\begin{definition}\label{def:qd}
$\mH(\mD[t])$ is the algebra generated by 
$\{a_i, h_r :
i\in\mO_4^4,r\in\s_4\}$ satisfying the following relations for 
$r,s,j\in\s_n$ and $i\in  \mO_4^4$:
\begin{align*}
h_e=1, \quad h_rh_s=h_{rs}, \quad 
h_j a_i=-a_{j\trid i}h_j, \quad 
a_{(1234)}^2=\Lambda(1-h_{(13)}h_{(24)}),\\
a_{(1234)}a_{(1432)}+a_{(1432)}a_{(1234)}=0,\\
a_{(1234)}a_{(1243)}+a_{(1243)}a_{(1423)}+a_{(1423)}a_{(1234)}
=\Gamma(1-h_{(12)}h_{(13)}).
\end{align*}
\end{definition}

\begin{obs}
For each quadratic lifting datum 
$\mQ = \mQ_n^{-1}[t], \mQ_n^{\chi}[\lambda], \mD[t]$, the  
algebra $\hq$ has a structure of a pointed Hopf algebra setting
\begin{equation}\label{eq:com-ql}
\com(h_t)=h_t\ot h_t\text{ and  }\com(a_i)= a_i\ot 1+h_i\ot a_i,\quad t\in
\s_{n}, i\in X.
 \end{equation}
Moreover, they
satisfy  that $\gr\hq={\toba}(X,q)\#\Bbbk \s_n$, 
with $n$ as appropriate see
\cite{GG}.
\end{obs}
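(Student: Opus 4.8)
The plan is to proceed in the standard way of the lifting method recalled in Subsection~\ref{subsec:lifting-method}: first verify that the formulas \eqref{eq:com-ql} endow $\hq$ with a Hopf algebra structure by checking their compatibility with the defining relations, and then identify $\gr\hq$ by means of a natural filtration. Throughout, $X=\mO_2^n$ or $X=\mO_4^4$, and $(\Bbbk X,c^q)$ is the braided vector space realized over $\s_n$ as in Remark~\ref{rmk:princ-realiz-sn}.

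For the Hopf algebra structure the essential point is that $\com$ extends to an algebra map $\hq\to\hq\ot\hq$, i.e.\ that it preserves each relation in Definitions~\ref{def:q-1}, \ref{def:qchi} and \ref{def:qd}. The relations $h_e=1$, $h_rh_s=h_{rs}$ are immediate since the $h_t$ are group-like, and $h_j a_i=\pm a_{j\trid i}h_j$ follows by expanding $\com$ on both sides using $\com(a_i)=a_i\ot 1+h_i\ot a_i$, together with $h_jh_i=h_{ji}$ and $h_{j\trid i}h_j=h_{ji}$. For the quadratic relations the key observation is that each right-hand side, such as $\Lambda(1-h_{(12)}h_{(34)})$ or $\Gamma(1-h_{(12)}h_{(23)})$, is a $(1,h_ih_j)$-skew-primitive element, and a short computation shows the left-hand side is skew-primitive of the same type; applying $\com$ to both sides and matching homogeneous components then yields the identity. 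One defines $\eps(h_t)=1$, $\eps(a_i)=0$, and the antipode by $\cS(h_t)=h_{t^{-1}}$ and $\cS(a_i)=-h_i^{-1}a_i=-h_{i^{-1}}a_i$, verifying compatibility with the relations in the same manner; coassociativity and the counit axioms hold on generators, hence everywhere.

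To identify the associated graded, filter $\hq$ by assigning degree $0$ to the subalgebra $\Bbbk\s_n$ generated by the $h_t$ and degree $1$ to each $a_i$. Since the deforming terms on the right-hand sides of the quadratic relations lie in $\Bbbk\s_n$, hence in filtration degree $0<2$, this is a Hopf algebra filtration, and the group-likes generate the cosemisimple Hopf subalgebra $\Bbbk\s_n$, which is therefore contained in the coradical; as $\hq$ is generated by group-likes and $(1,h_i)$-skew-primitives, $\hq$ is pointed with $G(\hq)=\s_n$. The homogeneous projection $\gr\hq\to\Bbbk\s_n$ gives $\gr\hq\simeq R\#\Bbbk\s_n$, where the diagram $R$ is generated in degree one by the classes $\bar a_i$, and these span a Yetter-Drinfeld module isomorphic to $(\Bbbk X,c^q)$ via \eqref{eq:yd-realiz-sn}. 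In $\gr\hq$ the degree-zero deforming terms vanish, so the $\bar a_i$ satisfy exactly the relations presenting the Nichols algebras $\toba(\mO_2^n,-1)$, $\toba(\mO_2^n,\chi)$ and $\toba(\mO_4^4,-1)$ of the presentations (a)--(c) preceding Remark~\ref{rmk:princ-realiz-sn}. By the universal property of Nichols algebras this produces a surjection $\toba(X,q)\#\Bbbk\s_n\twoheadrightarrow\gr\hq$.

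The main obstacle is to show this surjection is an isomorphism, that is, that the deformation does not lower the dimension: one must establish $\dim\hq\ge\dim\toba(X,q)\cdot n!$. I would obtain this by exhibiting an explicit spanning set of $\hq$ given by a normal form for ordered monomials in the $a_i$ over $\Bbbk\s_n$, which already yields the upper bound $\dim\hq\le\dim\toba(X,q)\cdot n!$, and then arguing that this spanning set is a basis---equivalently, that the deformed relations form a confluent reduction system with no overlaps beyond those of the Nichols algebra itself. This linear-independence step is precisely the technical heart of the lifting classification carried out in \cite{AHS} and \cite{GG}, to which we appeal for the equality of dimensions and hence for $\gr\hq\simeq\toba(X,q)\#\Bbbk\s_n$.
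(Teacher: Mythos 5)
Your proposal is correct and is essentially the route the paper takes: the paper offers no argument for this remark beyond the citation ``see \cite{GG}'', and your outline is the standard verification (compatibility of $\com$ with the relations via skew-primitivity of both sides, a filtration with the deforming terms in degree zero, and a surjection $\toba(X,q)\#\Bbbk\s_n\twoheadrightarrow\gr\hq$), with the one genuinely substantive step --- the dimension count showing the deformation does not collapse --- correctly deferred to \cite{AHS} and \cite{GG}, exactly where the paper places it.
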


The following theorem summarizes the classification of 
finite dimensional pointed
Hopf algebras over $\s_{3}$ and $\s_{4}$, see \cite{AHS}, 
\cite{GG}.

\begin{theorem}\label{thm:class-sn}
Let $H$ be a nontrivial 
finite dimensional pointed Hopf algebra with $G(H)=\s_{n}$.
\begin{enumerate}
 \item[$(i)$]  If 
$n=3$, then either 
$H\simeq \toba (\mO_2^3,-1)\# \Bbbk \s_{3}$ or
 $H\simeq \mH(\mQ_3^{-1}[(0,1)])$. 
 \item[$(ii)$] If 
$n=4$, then either $H\simeq  \toba(X,q)\# \Bbbk \s_{4}$ with
$(X,q) =  (\mO_2^4,-1)$, $(\mO_4^4,-1)$ or
$(\mO_2^4,\chi)$, or 
$H\simeq  \mH(\mQ_4^{-1}[t])$, or $H\simeq   \mH(\mQ_4^{\chi}[1])$, or 
$H\simeq  \mH(\mD[t])$ with 
$t\in\mathbb{P}_\Bbbk^1$.
\qed
\end{enumerate} 
\end{theorem}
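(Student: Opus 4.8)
The plan is to invoke the Lifting Method recalled in Subsection~\ref{subsec:lifting-method}, which reduces the classification to three tasks: determining all $V\in\ydsn$ with $\dim\toba(V)<\infty$; computing all Hopf algebras $H$ with $\gr H\simeq\toba(V)\#\Bbbk\sn$; and proving that every finite dimensional pointed Hopf algebra with $G(H)=\sn$ is generated by its group-likes and skew-primitives. Since the statement is a compilation of the results of \cite{AHS} and \cite{GG}, the proof amounts to assembling these three ingredients for $n=3$ and $n=4$ and matching the outcome against the families of Definitions~\ref{def:q-1}, \ref{def:qchi} and \ref{def:qd}.

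For the first task, I would appeal to the classification of finite dimensional Nichols algebras over $\st$ and $\sk$ recorded in Subsection~\ref{subsec:ydnasn}. Over $\st$ the only braided vector space supported on a nontrivial conjugacy class with finite dimensional Nichols algebra is $(\mO_2^3,-1)$, so $\toba(\mO_2^3,-1)$ (of dimension $12$) is the unique nontrivial candidate; over $\sk$ there are exactly three, namely $\toba(\mO_2^4,-1)$, $\toba(\mO_4^4,-1)$ and $\toba(\mO_2^4,\chi)$. For the second task, I would verify that the quadratic algebras $\mH(\mQ_n^{-1}[t])$, $\mH(\mQ_n^{\chi}[\lambda])$ and $\mH(\mD[t])$ exhaust the liftings: a direct computation with the coradical filtration gives $\gr\hq\simeq\toba(X,q)\#\Bbbk\sn$ with the appropriate $(X,q)$, and a count of the admissible lifting parameters (together with the relations in the respective definitions) shows that no further deformations of the bosonization survive, so that the associated graded recovers precisely the corresponding $\toba(X,q)\#\Bbbk\sn$.

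The hard part will be the third task, generation in degree one, which is where the depth of \cite{AHS} and \cite{GG} lies. For $n=3$ this is comparatively accessible, since $\dim\toba(\mO_2^3,-1)$ is small and the possible diagrams can be analyzed directly. For $n=4$ one must instead rule out nontrivial primitives in higher degrees of the diagram $R=(\gr H)^{\co\pi}$, which forces a careful study of the Nichols algebras of all Yetter--Drinfeld submodules appearing and of the quadratic relations they satisfy; this is the technically delicate step settled in \cite{GG}. Granting generation in degree one, the infinitesimal braiding $R(1)$ lies in $\ydsn$ with finite dimensional Nichols algebra, hence is one of the modules isolated in the first task, and comparing the surviving lifting parameters with Definitions~\ref{def:q-1}--\ref{def:qd} yields exactly the lists in $(i)$ and $(ii)$.
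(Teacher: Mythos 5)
Your outline follows the same route as the paper, which states this theorem without proof as a summary of the classification carried out in \cite{AHS} and \cite{GG} via the lifting method, exactly the three-step scheme you describe. The only point your sketch glosses over is the normalization of the lifting parameters --- one must still check that rescaling the generators $a_i$ identifies $\mH(\mQ_3^{-1}[(0,\Gamma)])$ for all $\Gamma\neq 0$ with $\mH(\mQ_3^{-1}[(0,1)])$, and likewise collapses the parameter space to $t\in\mathbb{P}_\Bbbk^1$ in case $(ii)$ --- but this is a routine isomorphism computation settled in the cited references.
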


\subsection{Cocycle deformations and 
pointed Hopf algebras over   
$ \s_{n} $}
In the following we construct multiplicative $2$-cocycles and show 
that some families of the pointed Hopf algebras 
$\mH(\mQ_n^{-1}[t])$ and $\mH(\mD[t])$ are cocycle deformations
 of bosonizations of Nichols 
algebras in $\ydsn$.
As a consequence, we provide the family of cocycles needed to 
construct all finite dimensional pointed Hopf algebras over 
$\s_{3}$ up to isomorphism.

\smallbreak
Let $X=\mO^{n}_{2}$ or $\mO^{4}_{4}$ and 
denote the generators of $\toba(X,-1) $
 by $ x_{\tau}$ with $ \tau \in X$.
For all
$\sigma, \tau \in X$, define the linear maps  $ d_{\tau}$ on $\Bbbk X$
by  $ d_{\tau}(x_{\mu}) = \delta_{\tau,\mu}$. By Subsection \ref{subsec:hoch-H-inv},
the following map is a Hochschild $2$-cocycle on $\toba(X,-1) $
$$ \eta = \sum_{\mu,\tau \in X} \alpha_{\tau,\mu}
d_{\tau}\ot d_{\mu}.$$ 
The proof of the following lemma follows by a direct computation.
\begin{lema}\label{lem:inv-bider-q-1} 
 $ \eta $ is $ \s_{n} $-invariant if and only if 
$\alpha_{\tau,\mu} = \alpha_{\theta\trid \tau,\theta \trid \mu}$
for all $\tau,\mu \in X$ and $\theta\in \s_{n}$.\qed
 \end{lema}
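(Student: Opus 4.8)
The plan is to reduce $\s_{n}$-invariance to a condition on each group element and then compare coefficients directly. Since the group-likes $h_{\theta}$, $\theta\in\s_{n}$, span $\Bbbk\s_{n}$ as an algebra and satisfy $\com(h_{\theta})=h_{\theta}\ot h_{\theta}$, the functional $\eta$ is $\s_{n}$-invariant if and only if $\eta^{h_{\theta}}=\eta$ for every single $\theta\in\s_{n}$, where by definition $\eta^{h_{\theta}}(x_{\tau},x_{\mu})=\eta\big((h_{\theta})_{(1)}\cdot x_{\tau},\,(h_{\theta})_{(2)}\cdot x_{\mu}\big)=\eta(h_{\theta}\cdot x_{\tau},\,h_{\theta}\cdot x_{\mu})$, the last equality because $h_{\theta}$ is group-like.

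Next I would pin down the scalars produced by the action. For the rack cocycle $q=-1$ one has $q_{ji}=-1$ for all $i,j\in X$, so by Remark \ref{rmk:princ-realiz-sn} the associated one-cocycle satisfies $\chi_{\rho}(h_{y})=q_{y\rho}=-1$ for every $\rho\in X$ and every $y\in X$. Since $X$ generates $\s_{n}$ (the transpositions generate $\s_{n}$; the $4$-cycles generate $\s_{4}$, as they form a normal generating set containing odd permutations), and since this value $-1$ is \emph{independent of} $\rho$, iterating the one-cocycle identity $\chi_{\tau}(\sigma\mu)=\chi_{\tau}(\mu)\,\chi_{\mu\trid\tau}(\sigma)$ along a reduced word in transpositions expresses $\chi_{\tau}(h_{\theta})$ as a product of factors each equal to $-1$; hence $\chi_{\tau}(h_{\theta})=\sgn(\theta)$ for all $\theta\in\s_{n}$, uniformly in $\tau$. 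In particular $\chi_{\tau}(h_{\theta})\,\chi_{\mu}(h_{\theta})=\sgn(\theta)^{2}=1$.

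With this in hand I would substitute \eqref{eq:yd-realiz-sn}, namely $h_{\theta}\cdot x_{\tau}=\chi_{\tau}(h_{\theta})\,x_{\theta\trid\tau}$, into the expression above to obtain
\[
\eta^{h_{\theta}}(x_{\tau},x_{\mu})=\chi_{\tau}(h_{\theta})\,\chi_{\mu}(h_{\theta})\,\eta(x_{\theta\trid\tau},x_{\theta\trid\mu})=\alpha_{\theta\trid\tau,\,\theta\trid\mu},
\]
while $\eta(x_{\tau},x_{\mu})=\alpha_{\tau,\mu}$. Comparing coefficients on the basis $\{x_{\tau}\ot x_{\mu}\}_{\tau,\mu\in X}$ of $\Bbbk X\ot\Bbbk X$ then shows that $\eta^{h_{\theta}}=\eta$ for all $\theta$ is equivalent to $\alpha_{\tau,\mu}=\alpha_{\theta\trid\tau,\theta\trid\mu}$ for all $\tau,\mu\in X$ and all $\theta\in\s_{n}$, which is exactly the asserted condition.

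The only genuinely delicate point is the bookkeeping of the scalars $\chi_{\tau}(h_{\theta})$; once these are identified, the rest is a formal comparison of coefficients. The crucial simplification, and what makes the invariance condition reduce to plain conjugation-invariance of the array $(\alpha_{\tau,\mu})$, is that for the constant cocycle $-1$ the two relevant characters coincide ($\chi_{\tau}=\chi_{\mu}=\sgn$), so their product squares to $1$ and no sign survives.
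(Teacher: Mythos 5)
Your proof is correct and is exactly the ``direct computation'' the paper invokes without writing out: one evaluates $\eta^{h_{\theta}}$ on the basis $x_{\tau}\ot x_{\mu}$, observes that for the constant rack cocycle $-1$ the characters satisfy $\chi_{\tau}(h_{\theta})=\sgn(\theta)$ uniformly in $\tau$ (so the scalars cancel in pairs), and compares coefficients. Your derivation of $\chi_{\tau}(h_{\theta})=\sgn(\theta)$ from the $1$-cocycle identity is careful and sound, so there is nothing to add.
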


\begin{obs}\label{rmk:eta-inv-q-1}
Consider the set $\mathcal{T} =X\times X$.
Then $\s_{n}$, and in particular $X$, acts by conjugation on $\mathcal{T}$ by 
$\theta\cdot (\tau,\mu) = (\theta \trid \tau, \theta\trid \mu)$.  
If we set $\alpha: \mathcal{T} \to \Bbbk$ with
$\alpha (\tau, \mu) = \alpha_{\tau,\mu}$, then the coefficients
of $\eta$ are given by the function $\alpha$ and by Lemma \ref{lem:inv-bider-q-1},
$\eta$ is $\s_{n}$-invariant if and only if $\alpha$ is a class function,
\textit{i.e.} it is constant on each conjugacy class. 
Since $(\tau,\mu) $ is conjugate to $(\tau',\mu')$ if and only 
if $\tau\mu$ is conjugate to $\tau'\mu'$ in $\s_{n}$, if 
$\eta$ is $\s_{n}$-invariant, we may write in the case $X=\mO^{n}_{2}$
\begin{equation}\label{eq:rmk-eta-inv-q-1}
\eta = \beta_{\id} \sum_{\tau \in \mO^{n}_{2} }
d_{\tau} \ot d_{\tau}  + \beta_{(123)}  
\sum_{\substack{\tau,\mu \in \mO^{n}_{2}\\
\tau\mu \in \mO^{n}_{3}}}  
d_{\tau} \ot d_{\mu} +  \beta_{(12)(34)}
\sum_{\substack{\tau,\mu \in \mO^{n}_{2}\\
\tau\mu \in \mO^{n}_{2,2}}} 
d_{\tau} \ot d_{\mu}, 
\end{equation}
with $\beta_{\id}, \beta_{(123)}, \beta_{(12)(34)} \in \Bbbk$,
and in the case $X=\mO^{4}_{4}$
\begin{equation}
 \label{eq:rmk-eta-inv-q-1-4}
\eta = \gamma_{\id} \sum_{\tau \in \mO^{4}_{4} }
d_{\tau} \ot d_{\tau^{-1}}  + \gamma_{(123)}  
\sum_{\substack{\tau,\mu \in \mO^{4}_{4}\\
\tau\mu \in \mO^{4}_{3}}}  
d_{\tau} \ot d_{\mu} +  
\gamma_{(12)(34)}
\sum_{\tau \in \mO^{4}_{4}} 
d_{\tau} \ot d_{\tau},
 \end{equation}
with $\gamma_{\id}, \gamma_{(123)}, \gamma_{(12)(34)} \in \Bbbk$.
\end{obs}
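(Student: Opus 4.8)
The plan is to turn Lemma~\ref{lem:inv-bider-q-1} into the explicit description of the remark by analysing the orbits of the diagonal conjugation action of $\s_{n}$ on $X\times X$ given by $\theta\cdot(\tau,\mu)=(\theta\trid\tau,\theta\trid\mu)$. This action is well defined because conjugation by $\theta$ preserves cycle type and hence permutes $X$. By Lemma~\ref{lem:inv-bider-q-1}, a family $(\alpha_{\tau,\mu})$ defines an $\s_{n}$-invariant $\eta$ exactly when the function $\alpha(\tau,\mu)=\alpha_{\tau,\mu}$ is constant on these orbits; so the invariant functionals $\eta$ are parametrised by one scalar per orbit, and it remains only to enumerate the orbits and record a product $\tau\mu$ distinguishing each one. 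Throughout I would use the elementary compatibility of conjugation with products, $\theta\trid(\tau\mu)=(\theta\trid\tau)(\theta\trid\mu)$, so that the cycle type of $\tau\mu$ is an invariant of the orbit.

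For $X=\mO_2^{n}$ I would classify orbits by the overlap pattern of the two transpositions $\tau=(ab)$ and $\mu=(cd)$: either $\tau=\mu$, or $\tau$ and $\mu$ share exactly one point, or they are disjoint. Each pattern is a single orbit: the first because $\s_{n}$ is transitive on transpositions, the second by $3$-transitivity of $\s_{n}$ on the ordered triple consisting of the shared point, the point exclusive to $\tau$, and the point exclusive to $\mu$, and the third by $4$-transitivity on the four underlying points (this last pattern is empty when $n=3$). Computing the products shows that these three patterns correspond respectively to $\tau\mu$ being the identity, a $3$-cycle, and an element of type $(2,2)$; in particular $(\tau,\mu)$ and $(\tau',\mu')$ are simultaneously conjugate if and only if $\tau\mu$ and $\tau'\mu'$ are conjugate in $\s_{n}$. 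Assigning one constant $\beta_{\id},\beta_{(123)},\beta_{(12)(34)}$ to each orbit then gives exactly \eqref{eq:rmk-eta-inv-q-1}.

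For $X=\mO_4^{4}$ I would work directly in $\sk$. As a $4$-cycle is an odd permutation, $\tau\mu$ lies in $\A_{4}$, hence is the identity, a $3$-cycle, or a double transposition. The identity arises precisely for the six inverse pairs $(\tau,\tau^{-1})$, the double transpositions precisely for the six equal pairs $(\tau,\tau)$ with $\tau\mu=\tau^{2}$, and the remaining $24$ pairs all have a $3$-cycle product. Transitivity of conjugation on $4$-cycles shows that the inverse family and the equal family are each a single orbit; for the generic family I would run an orbit--stabiliser count, noting that the stabiliser of such a pair is $\langle\tau\rangle\cap\langle\mu\rangle$, which is trivial because $\tau^{2}\neq\mu^{2}$ for distinct non-inverse $4$-cycles, so the orbit has size $|\sk|=24$ and therefore exhausts the whole family. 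Naming the three orbit constants $\gamma_{\id},\gamma_{(123)},\gamma_{(12)(34)}$ then yields \eqref{eq:rmk-eta-inv-q-1-4}.

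The reduction through Lemma~\ref{lem:inv-bider-q-1} is routine; the crux is the orbit classification, and specifically the assertion that simultaneous conjugacy of pairs is detected by the cycle type of the single product $\tau\mu$. The main obstacle I anticipate is showing that each candidate family is a \emph{single} orbit rather than a union of several: for transpositions this is secured by the multi-transitivity of $\s_{n}$ on points, while for $\mO_4^{4}$ it follows from the stabiliser computation above, which forces the generic orbit to contain all $24$ pairs. Once the number of orbits is seen to match the number of realised product cycle types, the closed expressions \eqref{eq:rmk-eta-inv-q-1} and \eqref{eq:rmk-eta-inv-q-1-4} are immediate.
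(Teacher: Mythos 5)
Your proposal is correct and follows the same route the remark itself sketches: reduce via Lemma \ref{lem:inv-bider-q-1} to class functions on $X\times X$ under diagonal conjugation, then show the orbits are exactly the fibres of the cycle type of the product $\tau\mu$. The paper leaves the orbit classification implicit (it is a remark, stated without proof), and your multi-transitivity argument for $\mO_2^n$ and the stabiliser count $\langle\tau\rangle\cap\langle\mu\rangle=\{e\}$ for the generic $\mO_4^4$ orbit supply precisely the missing verification.
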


Assume $\eta$ satisfies \eqref{eq:rmk-eta-inv-q-1} or 
\eqref{eq:rmk-eta-inv-q-1-4}. 
The next lemma states that 
the exponentiation
of the lifting of $\eta$ is a multiplicative 2-cocycle if
all coefficients $\beta$ or $\gamma$ are equal. Since the braiding
on $\toba(X,-1)$ is not symmetric, one needs to verify equations
 \eqref{eq1} and 
\eqref{eq2} on $V=\Bbbk X$.

\begin{lema}\label{lem:ql-1-cocycle}
Assume $ \eta = \sum_{\mu,\tau \in X} \alpha_{\tau,\mu}
d_{\tau}\ot d_{\mu}$ is $\s_{n}$-invariant. Then
it satisfies equations \eqref{eq1} and 
\eqref{eq2} 
if and only if $\alpha_{\tau,\mu} = \alpha_{\tau',\mu'}$
for all $\tau,\tau',\mu,\mu' \in X$.
In such a case, 
$ \sigma = e^{\tilde{\eta}} $ is a 
multiplicative $ 2 $-cocycle for $ \toba(X,-1)\# \Bbbk \s_{n} $.
\end{lema}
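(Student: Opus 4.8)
The plan is to turn the categorical conditions \eqref{eq1} and \eqref{eq2} of Lemma \ref{lem:cond-eps-biderivations} into scalar identities among the $\alpha_{\tau,\mu}$ and then read off that these scalars must all coincide. First I would evaluate both sides of \eqref{eq1} and \eqref{eq2} on a general basis vector $x_a\ot x_b\ot x_c\ot x_d$ of $V^{\ot 4}$, where $V=\Bbbk X$ carries the (non-symmetric) braiding $c(x_\tau\ot x_\mu)=-x_{\tau\trid\mu}\ot x_\tau$. Tracking the four maps $c_{1324},c_{2413},c_{1423},c_{2314}$ one checks that the sign $-1$ occurs an even number of times on each side, and that Lemma \ref{lem:inv-bider-q-1}, i.e.\ $\alpha_{\theta\trid\tau,\theta\trid\mu}=\alpha_{\tau,\mu}$, absorbs the outermost conjugations $a\trid(-)$. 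Hence \eqref{eq1} and \eqref{eq2} are equivalent to the scalar conditions, valid for all $a,b,c,d\in X$,
\begin{align*}
\alpha_{a,\,b\trid c}\,\alpha_{b,d} &= \alpha_{b,\,c\trid d}\,\alpha_{a,c},\\
\alpha_{b,c}\,\alpha_{a,\,b\trid(c\trid d)} &= \alpha_{b,c}\,\alpha_{a,d}.
\end{align*}
The $(\Leftarrow)$ direction is then immediate: if all $\alpha_{\tau,\mu}$ equal a common scalar, both sides of each identity collapse to the same product.

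For $(\Rightarrow)$ I would first invoke Remark \ref{rmk:eta-inv-q-1}: invariance makes $\alpha$ a class function for the diagonal conjugation action on $X\times X$, so for $X=\mO_2^n$ it is governed by the three scalars $\beta_{\id},\beta_{(123)},\beta_{(12)(34)}$ of \eqref{eq:rmk-eta-inv-q-1} (with $\beta_{(12)(34)}$ absent when $n=3$), and for $X=\mO_4^4$ by $\gamma_{\id},\gamma_{(123)},\gamma_{(12)(34)}$ of \eqref{eq:rmk-eta-inv-q-1-4}. It then remains to force these three scalars to agree by specializing the two scalar identities. Taking $d=c$ in the first identity and using $c\trid c=c$ produces the auxiliary relation $\alpha_{b,c}(\alpha_{a,\,b\trid c}-\alpha_{a,c})=0$; evaluating it at $a=(13),b=(12),c=(13)$ gives $\beta_{(123)}(\beta_{(123)}-\beta_{\id})=0$, while the first identity at $a=(23),b=(13),c=(12),d=(13)$ gives $\beta_{\id}^{2}=\beta_{(123)}^{2}$, and together these yield $\beta_{\id}=\beta_{(123)}$. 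For the disjoint coefficient I would use the first identity at $a=(23),b=(12),c=(13),d=(34)$ to get $\beta_{\id}\beta_{(12)(34)}=\beta_{(123)}^{2}$, and the second identity at $a=(13),b=(12),c=(34),d=(13)$ to get $\beta_{(12)(34)}(\beta_{(12)(34)}-\beta_{\id})=0$; combining these with $\beta_{(123)}=\beta_{\id}$ yields $(\beta_{(12)(34)}-\beta_{\id})^{2}=0$, so $\beta_{(12)(34)}=\beta_{\id}$ and all coefficients coincide. The case $X=\mO_4^4$ runs along the same lines with analogous substitutions among $4$-cycles, the only bookkeeping subtlety being that the diagonal orbit pairs $\tau$ with $\tau^{-1}$ rather than with $\tau$.

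Finally, once $\alpha$ is constant the identities \eqref{eq1} and \eqref{eq2} hold on $V^{\ot 4}$, so condition $(a)$ of Lemma \ref{lem:cond-eps-biderivations} is satisfied; by that lemma conditions $(b)$ and $(c)$ hold, which are exactly the commutativity hypotheses of Lemma \ref{le:cond-comm-cocycle}, and therefore $\sigma=e^{\tilde\eta}$ is a multiplicative $2$-cocycle on $\toba(X,-1)\#\Bbbk\sn$. The main obstacle I anticipate is precisely that the braiding here is not symmetric, so Corollary \ref{cor:linear-2-cocycle} does not apply and \eqref{eq1}, \eqref{eq2} must be verified by hand; the delicate part is choosing enough explicit substitutions, in both families $X=\mO_2^n$ and $X=\mO_4^4$ and correctly threading the rack operation $\trid$ through the conjugations, to separate the three orbit-coefficients.
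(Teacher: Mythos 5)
Your proposal is correct, and it follows the paper's skeleton up to the last step: both arguments invoke Lemma \ref{lem:cond-eps-biderivations} to reduce everything to \eqref{eq1} and \eqref{eq2} on $V^{\ot 4}$ with $V=\Bbbk X$, note that the signs contributed by the constant cocycle $-1$ cancel, and arrive at the same two rack-level scalar identities among the $\alpha_{\tau,\mu}$, from which the ``if'' direction is immediate. (One small imprecision: for \eqref{eq1} the braiding $c$ occurs once in $c_{1324}$ and three times in $c_{2413}$, so the sign appears an \emph{odd} number of times on each side; the point is that the parities agree, not that they are even.) Where you genuinely diverge is the ``only if'' direction. The paper uses only \eqref{eq2}: after applying invariance it reads $\alpha_{r,\,s\trid(t\trid u)}\alpha_{s,t}=\alpha_{s,t}\alpha_{r,u}$, so any nonvanishing $\alpha_{s,t}$ forces $\alpha_{r,u}=\alpha_{r,\,s\trid u}$, and combined with the class-function form \eqref{eq:rmk-eta-inv-q-1} or \eqref{eq:rmk-eta-inv-q-1-4} this gives constancy. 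You instead specialize both identities at explicit transpositions and solve the resulting quadratic system in the orbit coefficients; I checked your four substitutions and they do produce $\beta_{(123)}(\beta_{(123)}-\beta_{\id})=0$, $\beta_{\id}^{2}=\beta_{(123)}^{2}$, $\beta_{\id}\beta_{(12)(34)}=\beta_{(123)}^{2}$ and $\beta_{(12)(34)}(\beta_{(12)(34)}-\beta_{\id})=0$, which indeed force all coefficients to coincide (with the $n=3$ case dropping the last two). Your route is longer but fully explicit and handles the all-zero case automatically, whereas the paper's propagation step is shorter but terse; the only real debt in your write-up is the $X=\mO_4^4$ case, which you assert is ``analogous'' without exhibiting the substitutions --- they do exist, but since the diagonal orbits there pair $\tau$ with $\tau^{-1}$ the verification is not a literal relabeling of the $\mO_2^n$ computation and should be spelled out.
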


\pf By Lemma \ref{lem:cond-eps-biderivations}, we need 
only to verify equations \eqref{eq1} and 
\eqref{eq2} on $V=\Bbbk X$. 
Since
$\chi_{\tau}(h_{\mu}) = \sg(\mu) = -1$ for all 
$\tau, \mu \in \mO_2^{n}$ and 
$\chi_{\tau}(h_{\mu}) = 1$ for all 
$\tau, \mu \in \mO_4^{4}$ these
equations 
on 
$x_{r}, x_{s}$, $x_{t}$, $x_{u}$
with $r,s,t,u \in X $  equal:
\begin{align*}
\eqref{eq1}\quad & 
{\eta}(x_{r}, x_{s\trid t}){\eta}(x_{s},x_{u}) = 
 {\eta}(x_{r\trid s},x_{r\trid (t\trid u)})
{\eta}(x_{r}, x_{t}) ,\\
\eqref{eq2}\quad &  
{\eta}(x_{r}, x_{s\trid(t\trid u)}){\eta}(x_{s},x_{t}) 
=  
{\eta}(x_{r\trid s},x_{r\trid t})
\eta(x_{r}, x_{u}).
\end{align*}
It is clear that if $\eta = \lambda \sum_{\mu,\tau \in X}
d_{\tau}\ot d_{\mu} $ for some $\lambda \in \Bbbk$, then 
both equations are satisfied.
Conversely, assume $\eta$ satisfies \eqref{eq1} and \eqref{eq2}.
Since $t\trid - $ is a bijection for all $t\in X$, by \eqref{eq2}
we have that $\alpha_{r,s\trid(t\trid u)}{\alpha}_{s,t} = {\alpha}_{s,t}
{\alpha}_{r,u}$ for all $r,s,t,u \in X$.
If ${\alpha}_{s,t} \neq 0$ for some
$s,t\in X$, then ${\alpha}_{r,  u} = 
\alpha_{r, s \trid u}$ for all $r,s, u \in X$.
Since $\eta$ must satisfy \eqref{eq:rmk-eta-inv-q-1} or 
\eqref{eq:rmk-eta-inv-q-1-4},
it follows that  
$\eta = \lambda \sum_{\mu,\tau \in X}
d_{\tau}\ot d_{\mu} $ for some $\lambda \in \Bbbk$. The 
rest of the claim follows now by Lemma \ref{lem:cond-eps-biderivations}.
\epf

\begin{theorem}\label{thm:q-1-cocycle}
Let $ H = \toba(X,-1)\# \Bbbk \s_{n}$ 
and $\sigma = e^{\tilde{\eta}}$ 
be the multiplicative $2-cocycle$ given by \emph{Lemma \ref{lem:ql-1-cocycle}}
with $\eta = \frac{\lambda}{3} 
\sum_{\mu,\tau \in \mO_2^{n}}
d_{\tau}\ot d_{\mu} $ and $\lambda \in \Bbbk$. 
\begin{enumerate}
 \item[$(i)$] If $X= \mO^{n}_{2}$ then
$ H_{\sigma} \simeq  \mH(\mQ_3^{-1}[(0,\lambda)])$  for $n=3$ and
$ H_{\sigma} \simeq  
\mH(\mQ_n^{-1}[(2\lambda,3\lambda)])$  for $n\geq 4$.
 \item[$(ii)$] If $X= \mO^{4}_{4}$ then $ H_{\sigma} \simeq  
\mH(\mD[(\lambda,3\lambda)])$.
\end{enumerate}
In particular, $\mH(\mQ_3^{-1}[(0,\lambda)])$ is a cocycle deformation
of $H$ for all $\lambda \in \Bbbk$.
\end{theorem}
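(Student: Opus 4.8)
The plan is to follow the pattern of the proofs of Theorems \ref{thm:AI-cocycle} and \ref{thm:BIL-cocycle}: exhibit the defining relations of the target algebra among the generators of $H_\sigma$, deduce a surjective Hopf algebra morphism, and close the argument with a dimension count. Write $a_\tau=x_\tau\#1\in H_\sigma$ for $\tau\in X$ and $h_r=1\#h_r$. Since $\tilde\eta$ vanishes on $A_0\ot A+A\ot A_0$ with $A_0=\Bbbk\s_n$, both $\sigma$ and $\sigma^{-1}$ restrict to $\eps\ot\eps$ as soon as one argument lies in $\Bbbk\s_n$; hence the multiplication of $H_\sigma$ is undeformed on the group part and between the group and the $a_\tau$. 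Consequently $h_rh_s=h_{rs}$ and $h_j\cdot_\sigma a_\tau=h_ja_\tau=\chi_\tau(h_j)\,a_{j\trid\tau}h_j$, and $H_\sigma=H$ as coalgebras; thus the group relations, the action relations, and the coproducts \eqref{eq:com-ql} of the target presentation already hold in $H_\sigma$.

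For the quadratic relations among the $a_\tau$, recall that $\sigma=e^{\tilde\eta}$ is a multiplicative $2$-cocycle by Lemma \ref{lem:ql-1-cocycle} and that $\delta(x_\tau)=h_\tau\ot x_\tau$ with $h_\tau\in G(H)$, so Lemma \ref{lem:eta=sigma} applies and gives
\[
a_\tau\cdot_\sigma a_\mu=\tfrac{\lambda}{3}\,(1-h_\tau h_\mu)+a_\tau a_\mu,\qquad \tau,\mu\in X,
\]
as $\eta(x_\tau,x_\mu)=\tfrac{\lambda}{3}$ for all $\tau,\mu$. Substituting this into the defining relations of $\toba(\mO_2^n,-1)$ and $\toba(\mO_4^4,-1)$ recalled above, I find: for a transposition $h_\tau^2=1$, whence $a_\tau\cdot_\sigma a_\tau=0$; for a $4$-cycle $h_\tau^2=h_{(13)}h_{(24)}$, whence $a_\tau\cdot_\sigma a_\tau=\tfrac{\lambda}{3}(1-h_{(13)}h_{(24)})$. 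For disjoint transpositions the Nichols relation kills $a_\tau a_\mu+a_\mu a_\tau$ while $h_\tau h_\mu=h_\mu h_\tau$, so $a_\tau\cdot_\sigma a_\mu+a_\mu\cdot_\sigma a_\tau=\tfrac{2\lambda}{3}(1-h_\tau h_\mu)$; for inverse $4$-cycles the correction vanishes since $h_\tau h_{\tau^{-1}}=1$.

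The essential point is the three-term relations, where the group elements entering as corrections coincide: one checks directly that $h_{(ab)}h_{(bc)}=h_{(bc)}h_{(ac)}=h_{(ac)}h_{(ab)}=h_{(abc)}$, and likewise $h_ih_j=h_kh_i=h_jh_k=h_{(132)}$ for the triple $i=(1234)$, $j=(1243)$, $k=(1423)$, which satisfies $ij=ki=jk$. Using the vanishing of the undeformed three-term sum, each deformed relation therefore collapses to $\tfrac{\lambda}{3}\cdot 3\,(1-h_{(abc)})=\lambda(1-h_{(abc)})$. Reading off the coefficients yields the lifting data $(\Lambda,\Gamma)=(\tfrac{2\lambda}{3},\lambda)$ for $X=\mO_2^n$ (with the $\Lambda$-relation absent, hence $\Lambda=0$, when $n=3$, since $\s_3$ has no pair of disjoint transpositions) and $(\Lambda,\Gamma)=(\tfrac{\lambda}{3},\lambda)$ for $X=\mO_4^4$.

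Finally I reconcile these with the stated data. Rescaling $a_i\mapsto c\,a_i$ preserves $(1,h_i)$-skew-primitivity and defines a Hopf algebra isomorphism carrying the datum $t$ to $c^2t$; as $\Bbbk$ is algebraically closed, $c^2$ exhausts $\Bbbk^\times$, so only the projective class of $t$ matters. Hence $(\tfrac{2\lambda}{3},\lambda)$ and $(2\lambda,3\lambda)$, respectively $(\tfrac{\lambda}{3},\lambda)$ and $(\lambda,3\lambda)$, determine isomorphic Hopf algebras, while for $n=3$ the datum $(0,\lambda)$ is reached with no rescaling. Since the $a_\tau$ and $h_r$ generate $H_\sigma$ and satisfy the defining relations of the target algebra, the universal property provides a surjective Hopf algebra morphism onto $H_\sigma$; and as $\dim H_\sigma=\dim H=\dim\toba(X,-1)\cdot n!=\dim\gr\hq=\dim\hq$, this morphism is an isomorphism. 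I expect the main obstacle to be precisely the bookkeeping that forces the three correction terms in each triple to collapse to a single $h_{(abc)}$ — this is what converts three separate $\tfrac{\lambda}{3}$ contributions into the clean value $\Gamma=\lambda$ — together with keeping track of the projective rescaling needed to land on the normalization in the statement.
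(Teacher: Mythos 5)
Your proposal is correct and follows essentially the same route as the paper: apply Lemma \ref{lem:eta=sigma} to compute $a_\tau\cdot_\sigma a_\mu$, check the deformed quadratic relations case by case (noting that the three correction terms in each three-term relation involve the same group element), and conclude by surjection plus dimension count. The only addition is your explicit rescaling argument $a_i\mapsto c\,a_i$ justifying that only the projective class of $t$ matters; the paper passes from $(\tfrac{2\lambda}{3},\lambda)$ to $(2\lambda,3\lambda)$ without comment, relying on $t\in\mathbb{P}^1_\Bbbk$ in the classification theorem.
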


\pf 
As in the proof of Theorems \ref{thm:AI-cocycle} and 
\ref{thm:BIL-cocycle},
it suffices to show that the generators of $ H_{\sigma} $ satisfy the 
relations given in Definitions \ref{def:q-1} and \ref{def:qd}, respectively.
For $\tau \in X$, let $ a_{\tau} = 
x_{\tau}\# 1 \in H$.
Then by Lemma 
\ref{lem:eta=sigma} we have  
for all $\tau,\mu \in \mO^{n}_{2}$ that
\begin{align*}
a_{\tau}\cdot_{\sigma} a_{\mu}  
& = \eta(x_{\tau}, x_{\mu})
(1 - h_{\tau}h_{\mu}) +
a_{\tau}a_{\mu} 
= \lambda (1- h_{\tau\mu}) +
a_{\tau}a_{\mu}.
\end{align*}
Hence, if $X=\mO^{n}_{2}$ we get that
$a_{(12)}\cdot_{\sigma}a_{(12)}
= a_{(12)}^{2} +
\frac{\lambda }{3}(1- h_{(12)(12)}) = \frac{\lambda }{3} (1-h_{e}) = 0$
and 
\begin{align*}
& a_{(12)}\cdot_{\sigma}a_{(23)} +
a_{(23)}\cdot_{\sigma} a_{(13)} + 
a_{(13)}\cdot_{\sigma}a_{(12)}  = \\
& \qquad = a_{(12)}a_{(23)} +
a_{(23)}a_{(13)} + 
a_{(13)}a_{(12)} +
\frac{\lambda }{3}(1- h_{(12)(23)}) + 
\frac{\lambda }{3}(1- h_{(23)(13)}) + 
\frac{\lambda }{3}(1- h_{(13)(12)}) \\
& \qquad = \lambda (1-h_{(123)}) = \lambda(1-h_{(12)(23)}).
\end{align*}
Taking $\Gamma = \lambda$,
this implies that $ H_{\sigma} \simeq  
\mH(\mQ_3^{-1}[(0,\lambda)])$  if $n=3$,
since both algebras have the same dimension. For $n\geq 4$ we need
to verify the extra relation involving the 
product of two disjoint transpositions:
\begin{align*}
& a_{(12)}\cdot_{\sigma}a_{(34)} +
a_{(34)}\cdot_{\sigma} a_{(12)}=  a_{(12)}a_{(34)} +
a_{(34)}a_{(12)}+
\frac{\lambda }{3}(1- h_{(12)(34)}) + 
\frac{\lambda }{3}(1- h_{(34)(12)})= \\
& \qquad = \frac{2\lambda}{3} (1-h_{(12)(34)}).
\end{align*}
Thus taking $t= (\Lambda, \Gamma)  = 
(\frac{2\lambda}{3}, \lambda)$, 
we have that $ H_{\sigma} \simeq  
\mH(\mQ_n^{-1}[(2\lambda,3\lambda)])$.
Assume $X=\mO^{4}_{4}$, then 
\begin{align*}
& a_{(1234)}\cdot_{\sigma}a_{(1234)}
= a_{(1234)}^{2} +
\frac{\lambda }{3}(1- h_{(1234)(1234)}) = \frac{\lambda }{3}(1-h_{(13)(24)}),\\
& a_{(1234)}\cdot_{\sigma}a_{(1432)} +
a_{(1432)}\cdot_{\sigma} a_{(1234)} = a_{(1234)}a_{(1432)} +
a_{(1432)}a_{(1234)} +\\
& \quad \qquad +
\frac{\lambda }{3}(1- h_{(1234)(1432)}) + 
\frac{\lambda }{3}(1- h_{(1432)(1234)}) = \frac{2\lambda }{3}(1- h_{e})= 0,\\
& a_{(1234)}\cdot_{\sigma}a_{(1243)} +
a_{(1243)}\cdot_{\sigma} a_{(1423)} + 
a_{(1423)} \cdot_{\sigma} a_{(1234)} = \\
& \qquad =a_{(1234)}a_{(1243)} +
a_{(1243)}a_{(1423)} + 
a_{(1423)} a_{(1234)}+\\
& \quad \qquad +
\frac{\lambda }{3}(1- h_{(1234)(1243)}) + 
\frac{\lambda }{3}(1- h_{(1243)(1423)}) +
\frac{\lambda }{3}(1- h_{(1423)(1234)}) =
\lambda (1 - h_{(12)(13)}).
\end{align*}
Therefore, taking $t= (\Lambda, \Gamma)  = (\frac{\lambda}{3}, \lambda)$, 
we have that $ H_{\sigma} \simeq  \mH(\mD[(\lambda,3\lambda)]))$.
\epf

\begin{obs}
\emph{Cocycle deformations 
and the algebras $\mH(\mQ_n^{\chi}[\lambda])$.}
As shown in \cite{GIM}, the pointed Hopf algebras 
$\mH(\mQ_n^{\chi}[\lambda])$ are cocycle deformations of 
$ \toba(\mO_2^{n},\chi)\# \Bbbk \s_{n} $. Regrettably, our
construction using $ \s_{n} $-invariant linear functionals on $\Bbbk \mO_2^{n} \ot \Bbbk \mO_2^{n} $
only provides the trivial deformation.
\end{obs}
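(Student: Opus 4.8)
The positive half is quoted from \cite{GIM}, so the content to be proved is the negative assertion: the exponential construction of Section~\ref{sec:cocycles-boson}, applied to $\s_n$-invariant functionals, yields only deformations isomorphic to the undeformed bosonization. Write $X=\mO_2^n$ with $n\ge 4$ (the range in which $\mH(\mQ_n^{\chi}[\lambda])$ is defined) and let $\eta=\sum_{\tau,\mu\in X}\alpha_{\tau,\mu}\,d_\tau\ot d_\mu$. Because the $\s_n$-action on $\k X$ is now twisted by the nonconstant cocycle $\chi$, the invariance condition reads $\alpha_{\tau,\mu}=\chi_\tau(h_\theta)\chi_\mu(h_\theta)\,\alpha_{\theta\trid\tau,\theta\trid\mu}$ for all $\theta\in\s_n$; this is the analogue of Lemma~\ref{lem:inv-bider-q-1}, but now the sign $\chi_\tau(h_\theta)\chi_\mu(h_\theta)$ need not be trivial. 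The plan is to isolate, via Lemma~\ref{lem:eta=sigma}, the single parameter of $\mH(\mQ_n^{\chi}[\lambda])$ that can be switched on, and then to use the multiplicative-cocycle conditions to force it to zero.

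First I would sort the coefficients by the $\s_n$-conjugacy type of the pair $(\tau,\mu)$, which is recorded by the product $\tau\mu$. For a \emph{disjoint} pair such as $((12),(34))$, conjugation by $\theta=(12)$ fixes the pair but contributes the sign $\chi_{(12)}(h_{(12)})\chi_{(34)}(h_{(12)})=(-1)(+1)=-1$, so invariance gives $\alpha_{(12),(34)}=-\alpha_{(12),(34)}=0$. For a \emph{sharing} pair such as $((ab),(ac))$ the product is a $3$-cycle; since $\s_n$ acts transitively on sharing pairs (it is transitive on $3$-cycles and, through the centralizer, on the three factorizations of each), every sharing coefficient equals $\pm A$ for the single scalar $A:=\alpha_{(23),(13)}$. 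In particular the three coefficients occurring in the defining relation of Definition~\ref{def:qchi} are all sharing coefficients, so by Lemma~\ref{lem:eta=sigma} (as in the proof of Theorem~\ref{thm:q-1-cocycle}) that relation deforms to $\lambda(1-h_{(12)}h_{(23)})$ with $\lambda=\alpha_{(12),(23)}-\alpha_{(23),(13)}-\alpha_{(13),(12)}$, a multiple of $A$ (explicitly $-3A$), while the remaining relations of $\mH(\mQ_n^{\chi}[\lambda])$ stay undeformed.

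The heart of the matter is to prove $A=0$. By Lemma~\ref{lem:cond-eps-biderivations} it suffices to impose \eqref{eq1} and \eqref{eq2} on $V=\k X$; reducing \eqref{eq1} with the invariance above turns it into $\chi_c(h_b)\,\alpha_{a,\,b\trid c}\,\alpha_{b,d}=\chi_d(h_c)\,\alpha_{b,\,c\trid d}\,\alpha_{a,c}$. Here the fourth point is indispensable: I would evaluate this identity at $(a,b,c,d)=((12),(13),(14),(13))$. On the left $b\trid c=(13)\trid(14)=(34)$, so the left-hand side is proportional to the disjoint coefficient $\alpha_{(12),(34)}=0$ and vanishes; on the right $c\trid d=(14)\trid(13)=(34)$, so it equals $\chi_{(13)}(h_{(14)})\,\alpha_{(13),(34)}\,\alpha_{(12),(14)}$, a product of two sharing coefficients. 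Tracing $\alpha_{(13),(34)}$ back to $\alpha_{(13),(23)}$ by conjugating with $(24)$ and $\alpha_{(12),(14)}$ back to $\alpha_{(12),(13)}$ by conjugating with $(34)$ shows the right-hand side is a nonzero multiple of $A^{2}$; hence $A^{2}=0$ and $A=0$.

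With $A=0$ every sharing coefficient vanishes, so in particular $\lambda=\alpha_{(12),(23)}-\alpha_{(23),(13)}-\alpha_{(13),(12)}=0$; by Lemma~\ref{lem:eta=sigma} the generators $a_\tau=x_\tau\#1$ of $H_\sigma$ then satisfy exactly the relations of $\mH(\mQ_n^{\chi}[0])=\toba(\mO_2^n,\chi)\#\k\s_n=H$, whence $H_\sigma\simeq H$ and the deformation is trivial, as claimed. (For $n=3$ the algebra $\mH(\mQ_3^{\chi}[\lambda])$ does not occur in Theorem~\ref{thm:class-sn}, so nothing is lost by assuming $n\ge 4$.) The one genuinely delicate ingredient—and the step I would write out most carefully—is the sign bookkeeping for the $1$-cocycle $\chi$: one must check that the twisted invariance is consistent enough to pin every sharing coefficient to $\pm A$, and that the surviving sign on the right-hand side of the chosen identity is nonzero, so that the presence of the fourth point (unavailable in $\s_3$) is exactly what collapses the deformation.
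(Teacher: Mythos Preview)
The paper states this remark without proof, so there is no argument in the text to compare against; your proposal supplies what the authors only assert. Your outline is correct and follows the natural route suggested by the paper's own machinery: the $\s_n$-invariance condition for the $\chi$-braiding picks up signs, forcing the disjoint coefficients to vanish and tying all sharing coefficients to $\pm A$; then the commuting condition \eqref{eq1}, evaluated at the $4$-tuple $((12),(13),(14),(13))$ that reaches outside $\{1,2,3\}$, makes one side vanish (it contains the disjoint factor $\alpha_{(12),(34)}$) while the other is a nonzero multiple of $A^{2}$, so $A=0$ and the deformation parameter $\lambda$ collapses. The reduction of \eqref{eq1} to $\chi_c(h_b)\,\alpha_{a,b\trid c}\,\alpha_{b,d}=\chi_d(h_c)\,\alpha_{b,c\trid d}\,\alpha_{a,c}$ via invariance is correct.

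Two small points worth tightening. First, you do not mention the diagonal coefficients $\alpha_{\tau,\tau}$; invariance leaves these unconstrained (they form a single common value $D$), but by Lemma~\ref{lem:eta=sigma} they contribute only $D(1-h_e)=0$ to the relation $a_{(12)}^{2}=0$ and hence do not deform anything---this deserves one sentence. Second, your phrase ``a nonzero multiple of $A^{2}$'' is right, but in a written version you should display the two sign calculations you sketch (conjugation by $(24)$ gives $\alpha_{(13),(34)}=-\alpha_{(13),(23)}$, conjugation by $(34)$ gives $\alpha_{(12),(14)}=\alpha_{(12),(13)}$) so the reader sees the overall factor is $\pm 1$ rather than accidentally zero. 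With those additions your argument is complete and more informative than the paper itself, which simply records the conclusion.
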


\subsection*{Acknowledgment}
Research of this paper was begun when the first author was visiting the 
Math. department at Saint Mary's University, Canada.
He thanks 
the second author and the people of the
department for their warm hospitality. The authors also wish to thank the referee
for the careful reading and the suggestions to improve the presentation of the paper.

\end{document}